\numberwithin{equation}{section}
\theoremstyle{theorem}
\newtheorem{theorem}{Theorem}[section]
\newtheorem{proposition}[theorem]{Proposition}
\newtheorem{lemma}[theorem]{Lemma}
\newtheorem{corollary}[theorem]{Corollary}
\newtheorem{problem}[theorem]{Problem}
\newtheorem{question}[theorem]{Question}
\theoremstyle{definition}
\newtheorem{definition}[theorem]{Definition}
\newtheorem{example}[theorem]{Example}
\theoremstyle{remark}
\newtheorem{remark}[theorem]{Remark}
\newcommand{\Z}{\mathbb{Z}}
\newcommand{\R}{\mathbb{R}}
\newcommand{\cat}{\mathsf{cat}}
\newcommand{\secat}{\mathsf{secat}}
\newcommand{\TC}{\mathsf{TC}}
\newcommand{\wcat}{\mathsf{wcat}}
\title{Parametrized topological complexity of spherical fibrations over spheres}
\author[Y. Minowa]{Yuki Minowa}
\address{Department of Mathematics, Kyoto University, Kyoto, 606-8502, Japan}
\email{minowa.yuki.48z@st.kyoto-u.ac.jp}
\date{today}
\subjclass[2010]{55M30, 55S40}
\keywords{parametrized topological complexity, spherical fibration, sectional category, weak category}
\begin{document}

\begin{abstract}
  Parametrized topological complexity is a homotopy invariant that represents the degree of instability of motion planning problem that involves external constraints. We consider the parametrized topological complexity in the case of spherical fibrations over spheres. We explicitly compute a lower bound in terms of weak category and determine the parametrized topological complexity of some spherical fibrations.
\end{abstract}

\maketitle

%%%%% Section 1 %%%%%

\section{Introduction}\label{Introduction}

Farber \cite{F} defined topological complexity as a quantity for topological study of the robot motion planning problem. The topological complexity of a space $X$ represents the degree of instability of motion planning in $X$ as a configuration space. Recently, Cohen, Farber and Weinberger \cite{CFW} introduced a variant of topological complexity, called \emph{parametrized topological complexity}, to formulate the robot motion planning problem that involves external constraints as parameters, such as a set of obstacles that block the motion of robots. The configuration space of this motion planning problem is described as a fibration $p\colon X\to B$, where $X$ represents the configuration space and $B$ represents the external constraints. Then parametrized topological complexity is defined by a topological invariant of the map $p\colon X\to B$, where we denote it by $\TC[X\to B]$. See Section \ref{Parametrized topological complexity} for the precise definition.

Initially, the studies of parametrized topological complexity was motivated by the computation of $\TC[F(\R^k,m+n)\to F(\R^k,m)]$, where $F(Y,m)$ denotes the configuration space
\[
\{(x_1,\cdots,x_m)\in Y^{m}|x_i\neq x_j\quad\text{if}\quad i\neq j\}.
\]
Several authors have attempted to determine or estimate the parametrized topological complexity of other specific fibrations. In particular, Farber and Weinberger \cite{FW0, FW1} studied the parametrized topological complexity of sphere bundles coming from vector bundles; they gave an upper bound in terms of the $2$-frame bundle and a lower bound in terms of characteristic classes. Crabb \cite{C} reformulate this lower bound in terms of stable cohomotopy Euler class and obtained a similar result for projective bundles coming from vector bundles. Farber and Oprea \cite{FO} considered a fiber bundle $F\to X\to B$ associated to a principal $G$-bundle; they gave an upper and lower bounds in terms of $G$-equivariant homotopy invariants.

On the other hand, many variants of parametrized topological complexity have also been investigated. Farber and Paul \cite{FP22} introduced sequential parametrized topological complexity as an analog of sequential topological complexity \cite{R}. They also gave upper and lower bounds in terms of $2$-frame bundle and characteristic classes \cite{FP23}. Daundkar \cite{D} considered equivariant parametrized topological complexity for a fibration $X\to B$ that is also a $G$-map. Grant \cite{Gr} defined the parametrized topological complexity of an epimorphism of discrete groups, which coincides with the parametrized topological complexity of the induced fibration between corresponding Eilenberg-MacLane spaces, he gave an upper and lower bound in terms of the cohomology dimension of groups. However, in this paper, we will not delve into these variants.

In this paper, we investigate the parametrized topological complexity of spherical fibrations over spheres. Let
\[
  S^n\to X\to S^{m+1}
\]
be a homotopy fibration with $n\ge 1$. In general, as in \cite{CFW}, there is a simple lower bound
\[
  \TC[X\to S^{m+1}]\ge\TC(S^n)=
  \begin{cases}
    1&n\text{ is odd}\\
    2&n\text{ is even.}
  \end{cases}
\]
This lower bound coincides with the cup-length type cohomological lower bound \cite{CFW}, as well as with that given by Farber and Weinberger \cite{FW0, FW1} mentioned above, when the homotopy fibration is a sphere bundle coming from a vector bundle. On the other hand, if $n$ is odd, then we can modify Farber and Weinberger's upper bound \cite{FW1} such that
\begin{equation}
  \label{FWUB}
  \TC[X\to S^{m+1}]\le 2.
\end{equation}
Hence we may ask:

\begin{question}
  \label{question}
  Does there exist a homotopy fibration $S^n\to X\to S^{m+1}$ with $n$ odd and $\TC[X\to S^{m+1}]=2$?
\end{question}

The first purpose of this paper is to provide a sufficient condition for the existence of the homotopy fibration in Question \ref{question}. To state the results, we set notation. As in \cite{JW}, the total space $X$ has a cell decomposition
\[
  X\simeq S^n\cup_\beta e^{m+1}\cup e^{m+n+1}.
\]
We set $\chi(X)=\beta$. Let $\nabla\colon S^n\to S^n\vee S^n$ be the suspension comultiplication of a sphere. Then, for $\alpha\in\pi_{m+n}(S^n)$, we can define the crude Hopf invariant $\overline{H}_\nabla(\alpha)\in\pi_{m+n+1}(S^{2n})$. See \cite[Definition 2.11]{BH} for further details. Let $\eta$ denote the generator of the stable stem $\pi_1^S\cong\Z/2$.

\begin{theorem}
\label{main1}
  Let $2n+1\le m\le 4n-1$ with $n\ge 1$. Suppose that there are $\alpha\in\pi_{m+2n+1}(S^{2n+1})$ and $\beta\in\pi_m(S^{2n+1})$ such that $\overline{H}_\nabla(\alpha)\neq0$, $\Sigma(\eta\circ\beta)=0$ and $[\beta,1_{S^{2n+1}}]=0$. Then there exists a homotopy fibration $S^{2n+1}\to X\to S^{m+1}$ with $\chi(X)=\beta$ and
  \[
    \TC[X\to S^{m+1}]=2.
  \]
\end{theorem}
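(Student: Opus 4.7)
The plan is to establish two separate claims and combine them with the upper bound \eqref{FWUB}. Concretely, I would prove: (I) under the hypotheses, there exists a homotopy fibration $S^{2n+1}\to X\to S^{m+1}$ with $\chi(X)=\beta$, and (II) every such fibration satisfies $\TC[X\to S^{m+1}]\ge 2$. Together with $\TC[X\to S^{m+1}]\le 2$ from \eqref{FWUB}, which applies because the fibre has odd dimension, this yields the asserted equality.

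For (I), I would invoke the classification of fibrations with fibre $S^{2n+1}$ over $S^{m+1}$ by $\pi_m(G)$, where $G$ is the topological monoid of self-homotopy equivalences of $S^{2n+1}$. The evaluation $G\to S^{2n+1}$ at the basepoint induces $\pi_m(G)\to\pi_m(S^{2n+1})$, whose image is precisely the set of $\beta$ realizable as $\chi(X)$. In the metastable range $2n+1\le m\le 4n-1$, the obstruction to lifting $\beta$ is governed by the Whitehead-product pairing $\beta\mapsto[\beta,1_{S^{2n+1}}]$, with an $\eta$-multiplication secondary obstruction detected by $\Sigma(\eta\circ\beta)$. The hypotheses $[\beta,1_{S^{2n+1}}]=0$ and $\Sigma(\eta\circ\beta)=0$ exactly kill these obstructions, producing the required lift and hence the fibration.

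For (II), I would rely on a weak-category lower bound of the form $\TC[X\to B]\ge\wcat(W)$ for an auxiliary space $W$ built from the fibrewise diagonal $X\to X\times_B X$ (for instance its cofibre, or a Ganea-type model). Using the cell decomposition $X\simeq S^{2n+1}\cup_\beta e^{m+1}\cup e^{m+n+1}$ together with the fibrewise structure of $p$, the space $W$ acquires a cell whose attaching map, after running through the James splitting of $\Sigma\Omega\Sigma$ and the suspension comultiplication $\nabla$, lies in the image of $\overline{H}_\nabla\colon\pi_{m+2n+1}(S^{2n+1})\to\pi_{m+2n+2}(S^{4n+2})$. Non-vanishing of $\overline{H}_\nabla(\alpha)$ for some $\alpha$ then furnishes an essential class obstructing a fat-wedge factorization of the reduced diagonal of $W$, which is equivalent to $\wcat(W)\ge 2$.

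The chief obstacle is part (II): pinpointing the correct auxiliary space $W$ and establishing a clean dictionary between non-vanishing of $\overline{H}_\nabla(\alpha)$ and an essential cell in $W$. The fibration hypothesis forces the $(m+n+1)$-cell of $X$ to be attached by a map intimately tied to a Whitehead product in $S^{2n+1}\cup_\beta e^{m+1}$; it is this attaching datum that, under the comultiplication $\nabla$, produces the $\overline{H}_\nabla$-detected class inside $W$. Making the identification rigorous, and then verifying that it survives the passage to weak category, is the heart of the proof.
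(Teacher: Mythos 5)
Your overall framework (reduce to the weak category of the cofibre $W$ of the fibrewise diagonal, combine with the upper bound $\le 2$ for odd spheres) matches the paper, but there are two genuine gaps that the proposal does not resolve.

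First, the role of $\Sigma(\eta\circ\beta)=0$ is misidentified. The existence of a fibration $S^{2n+1}\to X\to S^{m+1}$ with $\chi(X)=\beta$ requires only $[\beta,1_{S^{2n+1}}]=0$; that condition alone is equivalent to extending $\beta+1_{S^{2n+1}}\colon S^m\vee S^{2n+1}\to S^{2n+1}$ over $S^m\times S^{2n+1}$, which is exactly the pushout datum producing $X$ (Lemma~\ref{cell decomposition E}). There is no secondary obstruction; there is nothing for $\Sigma(\eta\circ\beta)=0$ to kill. In the paper that condition is needed in step (II), not step (I): one has a pinch map $\rho$ on $W(\alpha)_{4n+2}$ and an identification $\rho\circ\lambda(\alpha)=\pm\Sigma^{2n+1}\chi(X)$ (Theorem~\ref{Sigma chi}), and $\eta\circ\rho\circ\lambda(\alpha)=0$ is required so that (a) the reduced diagonal of $W(\alpha)_{m+2n+2}$ is detected by $\overline{H}_\nabla(\lambda(\alpha))$ alone (Propositions~\ref{Hopf inv wcat} and~\ref{H unique}), and (b) the two possible second structure maps $\nabla$ and $\nabla+\bar\eta$ on $W(\alpha)_{4n+2}$ give the same crude Hopf invariant, so the choice is immaterial.

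Second, the proposal asserts that \emph{every} fibration with $\chi(X)=\beta$ has $\TC\ge2$, and then hopes $\overline{H}_\nabla(\alpha)\ne0$ abstractly provides an essential class in $W$. This is not what is proved, and it need not be true. The paper instead exploits the $\pi_{m+2n+1}(S^{2n+1})$-action on the set of fibrations with fixed $\chi$: the fibrations $X(\alpha)$ all have $\chi=\beta$, and the attaching map $\lambda(\alpha)$ of the $(m+2n+2)$-cell of $W(\alpha)$ obeys the coaction formula $\lambda(\alpha)=\lambda(0)+i\circ\alpha$ (Proposition~\ref{coaction3}). Because $i^{\wedge2}_*$ is an isomorphism in the relevant degree and everything is in a range where $\overline{H}_\nabla$ is additive, $\overline{H}_\nabla(\lambda(\alpha))=\overline{H}_\nabla(\lambda(0))+i^{\wedge2}_*\overline{H}_\nabla(\alpha)$. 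This yields a dichotomy: if $\overline{H}_\nabla(\lambda(0))\ne0$ take $X=X(0)$; if $\overline{H}_\nabla(\lambda(0))=0$ take $X=X(\alpha)$. Without the coaction formula and this case split, you have no way to bring $\overline{H}_\nabla(\alpha)\ne0$ to bear on any particular fibration; that identification is, as you note, "the heart of the proof," and it is precisely what is missing.
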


We prove that Theorem \ref{main1} is applicable to infinitely many cases.

\begin{corollary}
  \label{TC=2 family}
  There exist infinitely many pairs $(m, n)$, e.g. $m-2n-2=0$ and $n$ is odd, such that there exist homotopy fibrations $S^{2n+1}\to X\to S^{m+1}$ with
  \[
  \TC[X\to S^{m+1}] = 2.
  \]
\end{corollary}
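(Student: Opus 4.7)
The plan is to verify the three hypotheses of Theorem \ref{main1} for the infinite family $(m,n)$ with $m = 2n+2$ and $n$ odd. The numerical range $2n+1 \le m \le 4n-1$ becomes $2n+1 \le 2n+2 \le 4n-1$, which holds whenever $n \ge 2$; thus every odd $n \ge 3$ is admissible, giving infinitely many pairs.

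For the attaching class, $\pi_{2n+2}(S^{2n+1}) \cong \pi_1^S \cong \Z/2$ is generated by $\eta$. The nonzero choice $\beta = \eta$ forces $\Sigma(\eta \circ \eta) = \Sigma(\eta^2) \neq 0$, violating the hypothesis; hence we take $\beta = 0$, under which both $\Sigma(\eta\circ\beta) = 0$ and $[\beta, 1_{S^{2n+1}}] = 0$ are immediate. Note that $\beta = 0$ does not force the fibration to be trivial, because the higher attaching data (governed by $\alpha$) still varies.

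The substantive step is to exhibit $\alpha \in \pi_{4n+3}(S^{2n+1})$ with $\overline{H}_\nabla(\alpha) \neq 0$ in $\pi_{4n+4}(S^{4n+2}) \cong \pi_2^S \cong \Z/2$. The crude Hopf invariant for the suspension comultiplication on a sphere coincides, up to a single suspension, with the classical James--Hopf invariant $H_2\colon \pi_{4n+3}(S^{2n+1}) \to \pi_{4n+3}(S^{4n+1})$, so it suffices to produce $\alpha$ with $H_2(\alpha) = \eta^2$. The relevant portion of the EHP exact sequence reads
\[
\pi_{4n+3}(S^{2n+1}) \xrightarrow{H_2} \pi_{4n+3}(S^{4n+1}) \xrightarrow{P} \pi_{4n+1}(S^{2n}),
\]
so existence of $\alpha$ is equivalent to $P(\eta^2) = 0$. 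By the standard formula $P(E^2\gamma) = [\iota_{2n},\iota_{2n}]\circ\gamma$, we are reduced to the unstable vanishing $[\iota_{2n},\iota_{2n}]\circ \eta^2 = 0$ in $\pi_{4n+1}(S^{2n})$; for odd $n \ge 3$ this can be read off from Toda's tables, or derived from the fact that $[\iota_{2n},\iota_{2n}]$ has James--Hopf invariant $\pm 2$ while $\eta^2$ is $2$-torsion, so the composite is annihilated after the Hilton--Milnor expansion.

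With such $\alpha$ and $\beta = 0$ in hand, Theorem \ref{main1} produces, for each odd $n \ge 3$, a homotopy fibration $S^{2n+1} \to X \to S^{2n+3}$ with $\TC[X \to S^{2n+3}] = 2$, which gives the claimed infinite family of pairs. The hard part is the unstable composition identity $[\iota_{2n},\iota_{2n}]\circ\eta^2 = 0$; once that is settled the rest of the argument is formal.
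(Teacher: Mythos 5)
Your plan mirrors the paper's argument in the $k=1$ case: set $m=2n+2$, take $\beta=0$, and reduce the crude Hopf nonvanishing to the EHP computation $P(\eta^2)=0$, equivalently $[\eta^2,1_{S^{2n}}]=0$, for $n$ odd. The paper's Lemma~\ref{Hopf nontrivial family} does exactly this (and more: it also treats $k=2,5,8i-1$), citing Hilton's result \cite{Hi} for the vanishing.

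However, your proposed \emph{derivation} of the key unstable identity $[\iota_{2n},\iota_{2n}]\circ\eta^2=0$ is flawed. The observation that $H([\iota_{2n},\iota_{2n}])=\pm 2\iota_{4n-1}$ and $\eta^2$ is $2$-torsion gives, via $H(\gamma\circ\Sigma\xi)=H(\gamma)\circ\Sigma\xi$, only that
\[
H\bigl([\iota_{2n},\iota_{2n}]\circ\eta^2\bigr)=\pm 2\eta^2=0,
\]
i.e.\ the composite desuspends. But the composite \emph{also} suspends to zero, since any composite with a Whitehead product does: $E([\iota_{2n},\iota_{2n}]\circ\eta^2)=E([\iota_{2n},\iota_{2n}])\circ\Sigma\eta^2=0$. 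Elements of $\ker E\cap\ker H\subset\pi_{4n+1}(S^{2n})$ need not vanish, and in fact $\ker E=\operatorname{im}\bigl(P\colon\pi_{4n+3}(S^{4n+1})\to\pi_{4n+1}(S^{2n})\bigr)$ is precisely the cyclic group generated by $P(\eta^2)=\pm[\iota_{2n},\iota_{2n}]\circ\eta^2$, so the reasoning is circular. The vanishing for $n$ odd (and nonvanishing for $n$ even) is a genuine input that needs a reference — the paper uses Hilton \cite[Corollary~2]{Hi}; your ``read off from Toda's tables'' would also work but should be made precise. You implicitly acknowledge the gap by calling this ``the hard part,'' and indeed it is the only non-formal step.
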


To prove these statements, we begin by considering $\TC[X\to S^{m+1}]$ as the sectional category of the fiberwise diagonal map $\Delta\colon X\to X\times_{S^{m+1}}X$. We then recall weak sectional category introduced by Garc\'{i}a-Calcines and Vandembroucq \cite{GV}, which gives a sharper lower bound than the aforementioned lower bounds, but is much harder to compute. As we will see later, the weak sectional category of $\Delta$, denoted by $\mathsf{wsecat}(\Delta)$, coincides with the weak category of a certain $4$-cell complex $W$. Therefore, we aim to provide a sufficient condition for the weak category of $W$ to be equal to $2$.

The next purpose of this paper is to determine the parametrized topological complexity for specific homotopy fibrations $S^n\to X\to S^{n+1}$. We first consider the cases where $n$ is even and the homotopy fibration is the unit sphere bundle of a vector bundle and then consider the $n\equiv-1\pmod4$ with $n\ge11$ cases.

\begin{theorem}
  \label{main 0}
  \begin{enumerate}
    \item Let $S^{2n}\to X\to S^{2n+1}$ be the unit sphere bundle of a real vector bundle with $n\ge 1$. Then
    \[
    \TC[X\to S^{2n+1}]=2.
    \]
    \item Let $S^{4n-1}\to X\to S^{4n}$ be a homotopy fibration with $\chi(X)\equiv 2\pmod 4$ and $n\ge3$. Then
    \[
    \TC[X\to S^{4n}]=2.
    \]
  \end{enumerate}
\end{theorem}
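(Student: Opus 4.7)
The two parts need opposite halves of the sandwich. For part (1), the general lower bound $\TC[X \to S^{2n+1}] \geq \TC(S^{2n}) = 2$ is immediate since $2n$ is even, and only the upper bound needs work. For part (2), since $4n-1$ is odd, the modified Farber-Weinberger estimate (\ref{FWUB}) gives $\TC[X\to S^{4n}] \leq 2$ for free, and only the lower bound needs work.

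For part (1), I would appeal to the Farber-Weinberger upper bound of \cite{FW0, FW1} for sphere bundles of real vector bundles. Concretely, one motion planner handles the open stratum of non-antipodal pairs $(v, w) \in X \times_{S^{2n+1}} X$ via the unique shortest geodesic on the fiber sphere, and the remaining antipodal stratum $\{v = -w\}$ is handled by covering the base $S^{2n+1}$ by two contractible open sets over each of which $\xi$ trivializes, picking a fiberwise $2$-frame on each, and rotating $v$ to $-v$ through the plane spanned by $v$ and the second frame vector. This yields three motion planners total, i.e.\ $\TC[X \to S^{2n+1}] \leq 2$.

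For part (2), the plan is to apply Theorem \ref{main1} with its fiber parameter taken to be $2n-1$ so that $2(2n-1)+1 = 4n-1$ matches the fiber here, and with $m = 4n-1$. Writing $\beta = \chi(X) \equiv 2 \pmod 4$, $\beta$ is in particular even, so in the stable range $4n-1 \geq 11$ (ensured by $n \geq 3$) the relations $2\eta = 0$ and $2 [1_{S^{4n-1}}, 1_{S^{4n-1}}] = 0$ (valid since $4n-1 \notin \{1, 3, 7\}$) force $\Sigma(\eta \circ \beta) = 0$ and $[\beta, 1_{S^{4n-1}}] = 0$ automatically. The remaining hypothesis is the existence of $\alpha \in \pi_{8n-2}(S^{4n-1})$ with $\overline{H}_\nabla(\alpha) \neq 0$, which is a stable-homotopy check; a natural candidate is an appropriate $\eta$-composite whose crude Hopf invariant lands nontrivially in $\pi_{8n-1}(S^{8n-2}) \cong \Z/2$.

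The subtle point is that Theorem \ref{main1} as stated produces \emph{some} total space $X$ with $\chi(X) = \beta$ having $\TC = 2$, whereas part (2) asserts this for \emph{every} such $X$. To cover the gap I would revisit the proof of Theorem \ref{main1}: its key lower bound $\wcat(W) \geq 2$ is read off the cell structure of the auxiliary $4$-cell complex $W$, and I expect the obstruction to $\wcat(W) \leq 1$ to depend only on $\beta$ (and not on the top-cell attaching map of $X$) once the hypotheses above are in force. Verifying this independence is the main obstacle. The appearance of the mod-$4$ condition on $\chi(X)$ (rather than mod $2$) suggests the detection is via a secondary operation such as a Toda bracket $\langle \eta, 2, - \rangle$, which is exactly the kind of invariant insensitive to the top-cell attachment after the first obstruction has vanished.
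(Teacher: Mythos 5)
In part (1) the sandwich has the right shape and the Farber--Weinberger $2$-frame bundle is the right tool, but the upper bound you sketch fails. Proposition~\ref{upper bound FW} reads $\TC[X\to B]\le\secat(\widetilde X\to X)+1$, where $\secat$ counts open subsets of $X$, not of the base. Trivialising $V$ over a contractible $U\subset S^{2n+1}$ identifies $X|_U\cong U\times S^{2n}$ and $\widetilde X|_U\cong U\times V_2(\R^{2n+1})$, and a section of $\widetilde X|_U\to X|_U$ is a unit vector field on $S^{2n}$ --- which does not exist. Equivalently, a $2$-frame field defined on the base cannot assign to each fiber vector $v$ a vector orthogonal to $v$: when $v=\pm e_2(b)$ your rotation plane degenerates. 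The paper proves $\secat(\widetilde X\to X)\le 1$ by obstruction theory instead: via Proposition~\ref{secat join}, one needs a section of $\widetilde X\star_X\widetilde X\to X$; the unique obstruction lives in $H^{4n+1}(X;\Z/2)$, and after pulling back to $\widetilde X\star_X\widetilde X$ one shows (for $n\ge 2$) that it is simultaneously of the form $\mathrm{Sq}^2y$ and annihilated by an exact-sequence argument (Lemma~\ref{f}), hence zero (Proposition~\ref{upper bound even}). The case $n=1$ is handled directly since $\pi_3(BO(3))=0$.

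In part (2) you have correctly diagnosed the real issue: Theorem~\ref{main1} produces only one fibration with the prescribed $\chi$, whereas the statement is about every such $X$. Your instinct that the obstruction should depend only on $\chi(X)$ is right, but the paper does not fix this by a Toda-bracket refinement of Theorem~\ref{main1}; it argues directly. By Theorem~\ref{Sigma chi}, $\rho\circ\lambda(0)=\pm\chi(X)$. Proposition~\ref{coker} (built on Lemmas~\ref{ker delta} and~\ref{bar w}) shows that the suspension $E\colon\pi_{8n-3}(\overline W)\to\pi_{8n-2}(W(0)_{8n-2})$ has image on which $\rho$ takes values only in $4\Z$; the source of this is Lemma~\ref{bar w}: $-[1_{S^{4n-1}},1_{S^{4n-1}}]$ has order $2$, but its desuspension to $\pi_{8n-4}(S^{4n-2})$ has order $4$, and the quotient of these orders is the $\Z/2$ cokernel --- this is exactly where the $\chi(X)\equiv 2\pmod 4$ hypothesis bites. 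Thus $\lambda(0)$ does not desuspend, the EHP sequence forces $H(\lambda(0))\ne0$, Theorem~\ref{JH and crude} gives $\overline H_\nabla(\lambda(0))\ne0$, and (using $\eta\circ\chi(X)=0$) Corollary~\ref{sufficient condition} yields $\wcat(W(0))=2$, hence the lower bound, for the given $X$ with no freedom in a parameter $\alpha$.
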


We now concentrate our attention on a specific family of sphere bundles; let $S^n\to T\to S^{n+1}$ be the unit tangent bundle. Then $\chi(T)$ coincides with the Euler characteristic of $S^{n+1}$, which is $2$ for $n$ odd and $0$ for $n$ even. Then, for $n$ even and for $n\equiv-1\pmod4$ with $n\ge11$, we obtain $\TC[T\to S^{n+1}]=2$ by Theorem \ref{main 0}. This equation holds even for $n=3,7$, which we prove separately in Section \ref{3 and 7}. Thus we obtain:

\begin{theorem}
  \label{main3}
  If $n$ is even with $n\ge 2$ or $n\equiv -1\pmod 4$, then $\TC[T\to S^{n+1}]=2$.
\end{theorem}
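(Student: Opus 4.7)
The plan is to decompose the claim into three cases according to the residue of $n$ modulo $4$, dispatching two of them directly from Theorem~\ref{main 0} and deferring the remaining low-dimensional cases to a separate section.

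First, if $n$ is even with $n\ge 2$, then $S^n\to T\to S^{n+1}$ is the unit sphere bundle of the tangent vector bundle $TS^{n+1}$, and Theorem~\ref{main 0}(1) immediately gives $\TC[T\to S^{n+1}]=2$. Second, if $n\equiv -1\pmod 4$ with $n\ge 11$, write $n=4k-1$ with $k\ge 3$; since $n$ is odd, $\chi(T)$ equals $\chi(S^{n+1})=2$, so in particular $\chi(T)\equiv 2\pmod 4$, and Theorem~\ref{main 0}(2) applies to yield $\TC[T\to S^{n+1}]=2$.

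What remain are the two exceptional cases $n=3$ and $n=7$ (corresponding to $k=1,2$), which lie outside the hypothesis $k\ge 3$ of Theorem~\ref{main 0}(2); these I would defer to Section~\ref{3 and 7}. The upper bound $\TC[T\to S^{n+1}]\le 2$ is still supplied by \eqref{FWUB} since $n$ is odd, so only the lower bound needs work. Following the strategy outlined in the introduction, this reduces to proving $\wcat(W)\ge 2$ for the associated $4$-cell complex $W$, equivalently that the reduced diagonal $W\to W\wedge W$ is not null-homotopic. The main obstacle---and the precise reason these two cases cannot be absorbed into Theorem~\ref{main 0}(2)---is the presence of Hopf invariant one elements $\nu\in\pi_7(S^4)$ and $\sigma\in\pi_{15}(S^8)$, which interfere with the unstable calculations underlying the general argument. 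I would circumvent this by analysing the cohomology of $W$ in each of the two cases separately and detecting the non-nullity of the reduced diagonal through a cup product or Steenrod square computation, making explicit use of the attaching maps $2\iota_3$ and $2\iota_7$ of $T(S^4)$ and $T(S^8)$.
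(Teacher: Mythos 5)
Your handling of the two main ranges is correct and matches the paper: for $n$ even with $n\ge 2$ the unit tangent bundle is the sphere bundle of $TS^{n+1}$ and Theorem~\ref{main 0}(1) applies directly; for $n\equiv -1\pmod 4$, $n\ge 11$, you correctly compute $\chi(T)=\chi(S^{n+1})=2\equiv 2\pmod 4$ and invoke Theorem~\ref{main 0}(2). Your identification of the obstacle for $n=3,7$ (the existence of Hopf invariant one elements, equivalently the vanishing of $[1_{S^3},1_{S^3}]$ and $[1_{S^7},1_{S^7}]$, which breaks Lemma~\ref{bar w}) is also accurate.

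However, the method you sketch for the exceptional cases $n=3,7$ --- detecting non-nullity of the reduced diagonal $\overline{\Delta}\colon W\to W\wedge W$ by a cup product or Steenrod square computation --- would fail. The cup products in $\tilde{H}^*(W;\Z/2)\cong\ker\{\Delta^*\colon H^*(T\times_{S^{n+1}}T)\to H^*(T)\}$ all vanish: if $a=x_n+y_n$ denotes the generator of the kernel in degree $n$, then $a^2=0$ and in fact the whole augmentation ideal of $\ker\Delta^*$ has trivial square since $n$ is odd. Consequently $\overline{\Delta}$ induces the zero map on reduced mod $2$ cohomology, and no primary Steenrod operation applied to this zero map can detect non-triviality. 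The cohomological lower bound (Proposition~\ref{nil}) only gives $\TC\ge 1$ here, which is precisely what forces the paper to work with crude Hopf invariants rather than cohomology operations. The paper's actual argument in Section~\ref{3 and 7} is qualitatively different: it uses the $H$-space structure of $S^3$ and $S^7$ to exhibit the tangent bundle as arising from the clutching class $-\rho+2\sigma\in\pi_n(SO(n+1))$, produces a companion fibration $T(\alpha)$ from $2\sigma$ that is the sphere bundle of a \emph{complex} vector bundle (hence $\TC[T(\alpha)\to S^{n+1}]=1$ by Farber--Weinberger), shows that the twisting element $\alpha$ generates $\pi_{2n}(S^n)$ (via the commutator map and Samelson's theorem), and then uses $\overline{H}_\nabla(\lambda(\alpha))-\overline{H}_\nabla(\lambda(0))=i^{\wedge 2}_*\overline{H}_\nabla(\alpha)\ne 0$ (from $H(\alpha)=\eta$) together with $\overline{H}_\nabla(\lambda(\alpha))=0$ to conclude $\overline{H}_\nabla(\lambda(0))\ne 0$ and so $\wcat(W(0))=2$ via Corollary~\ref{sufficient condition}. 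You would need to rebuild that entire comparison argument, as the primary cohomological data you propose to use is insufficient.
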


The final purpose of this paper is to show that there are homotopy fibrations $S^n\to X\to S^{m+1}$ such that $\TC[X\to S^{m+1}]$ cannot be determined by the upper and lower bounds mentioned above. We consider the unit tangent bundle $S^n\to T\to S^{n+1}$ with $n\equiv 1 \pmod 4$. We compute $\mathsf{wsecat}(\Delta)$, or equivalently, the weak category of $W$ and obtain:

\begin{proposition}
  \label{main4}
  There are inequalities
  \[
  \mathsf{wsecat}(\Delta) = 1 \le \TC[T\to S^{4n+2}] \le 2.
  \]
\end{proposition}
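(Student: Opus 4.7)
The inequalities $1\le\TC[T\to S^{4n+2}]\le 2$ are immediate from the bounds recalled in the introduction: the upper bound is (\ref{FWUB}), applicable because the fiber $S^{4n+1}$ is odd-dimensional, and the lower bound is $\TC[T\to S^{4n+2}]\ge\TC(S^{4n+1})=1$. Since $\mathsf{wsecat}(\Delta)\le\TC[T\to S^{4n+2}]\le 2$, the only non-trivial content of the proposition is the equality $\mathsf{wsecat}(\Delta)=1$.

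To prove this equality, the plan is to use the identification $\mathsf{wsecat}(\Delta)=\wcat(W)$ announced in the introduction, where $W$ is a $4$-cell complex built from the cell data of $T$. First I would describe the four cells of $W$ and their attaching maps explicitly for the unit tangent bundle over $S^{4n+2}$, using $\chi(T)=2\in\pi_{4n+1}(S^{4n+1})=\Z$ together with the Whitehead products and crude Hopf invariants of the attaching classes of $T$ that enter the construction of $W$. The lower bound $\wcat(W)\ge 1$ should then follow from non-contractibility of $W$, visible on reduced cohomology once this cell structure is in hand.

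For the upper bound $\wcat(W)\le 1$, we must show that the reduced diagonal $\bar{\Delta}_W\colon W\to W\wedge W$ is nullhomotopic. The structural reason this should succeed is that the hypotheses of Theorem \ref{main1} that would force $\wcat(W)=2$ do not all apply to the tangent bundle in the range $n\equiv 1\pmod 4$: with $\beta=\chi(T)=2$ one has $\Sigma(\eta\circ\beta)=2\eta=0$ for free, but the crude Hopf invariant condition $\overline{H}_\nabla(\alpha)\neq 0$ needed to produce the non-trivial obstruction fails in this range; correspondingly, the map $\bar{\Delta}_W$ should be null.

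The main obstacle will be the actual verification that $\bar{\Delta}_W\simeq *$: once $W$ and $W\wedge W$ are concretely described, this reduces by obstruction theory to the vanishing of a specific class in a homotopy group determined by the cell structure. The computation is localized in a narrow dimension range near $2(4n+1)$ and $2(4n+2)$, so it should be tractable using standard data on the $2$-primary stable stems, in particular the relation $2\eta=0$ and the Whitehead product identities that come from the hypothesis $n\equiv 1\pmod 4$.
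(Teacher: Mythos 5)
Your reduction of the proposition to showing $\wcat(W(0))=1$ is correct, and the outer inequalities $1\le\TC[T\to S^{4n+2}]\le 2$ are as you say. The gap is in the upper bound $\wcat(W(0))\le 1$. The ``structural reason'' you offer is a converse error: Theorem \ref{main1} gives a \emph{sufficient} condition for $\wcat(W)=2$, so the failure of its hypotheses tells you nothing about whether $\wcat(W)$ is $1$ or $2$. Moreover, the relevant invariant is not $\overline{H}_\nabla(\alpha)$ for an abstract $\alpha\in\pi_*(S^{4n+1})$ (for the tangent bundle one has $\alpha=0$ anyway) but the crude Hopf invariant of the attaching map of the \emph{top} cell of $W(0)$, which lives in $\pi_{12n+3}(W(0)_{8n+3})$, above the range controlled by Corollary \ref{n equiv 1 lambda}.

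The claim that the null-homotopy of $\overline{\Delta}_W$ ``should be tractable using standard data on the $2$-primary stable stems'' is where the plan would break down. The dimension is not comfortably stable and the target $W(0)^{\wedge 2}$ is not a wedge of spheres, so generic obstruction theory leaves a genuinely unknown class. The paper's proof instead exploits the specific geometry of the tangent bundle: it identifies $W(0)$ as the cofiber of the $2$-frame bundle projection $p\colon\widetilde T\to T$, i.e.\ $V_3(\R^{4n+3})\to V_2(\R^{4n+3})$; uses James' cell decomposition and, crucially, the stable splitting $\widetilde T\simeq_S Q_{4n+2,3}\vee(S^{8n+1}\cup_2 e^{8n+2})\vee S^{8n+3}\vee S^{12n+3}$; proves that $\cat(W')=1$ for an intermediate complex $W'$ and that a connecting map $\delta\colon W'\to\Sigma\widetilde T_{8n+2}$ has a left homotopy inverse; and then applies the Boardman--Steer composite formula $\overline H_\nabla(g\circ f)=g^{\wedge 2}\circ\overline H_\nabla(f)+\overline H_\nabla(g)\circ\Sigma f$ to deduce $\overline H_\nabla$ of the top-cell attaching map is zero, from which $\wcat(W(0))\le 1$ follows by Theorem \ref{H and wcat}. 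None of these inputs are Toda-table facts about $\pi_*^S$; the Stiefel-manifold structure carries the argument and would need to be supplied before your sketch could close.
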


For $n=1,2$, $\TC[T\to S^{4n+2}]$ is determined to be $1$. However, for $n=3$, $\TC[T\to S^{4n+2}]$ cannot be determined.

This paper is organized as follows. Section \ref{Parametrized topological complexity} recalls the definition of the parametrized topological complexity and shows \eqref{FWUB}. Section \ref{complex W} investigates the cell structure of the complex $W$. Section \ref{Crude} provides a sufficient condition for $\mathsf{wsecat}(\Delta)=2$ in relation to crude Hopf invariant and Theorem \ref{main1} and Corollary \ref{TC=2 family}. Section \ref{Proof 0} computes the mod $2$ cohomology of $X$ and the total spaces of the related sphere bundles, then proves Theorem \ref{main 0} (1). Section \ref{Proofs} computes $\mathsf{wsecat}(\Delta)$ for specific homotopy fibrations, then proves Theorems \ref{main 0} (2) and \ref{main3}. Section \ref{tangent} also computes $\mathsf{wsecat}(\Delta)$ and proves Proposition \ref{main4}. Finally, Section \ref{Problems} lists some problems that could lead to further investigation.

\subsection*{Acknowledgement}
The author is grateful to Daisuke Kishimoto and Toshiyuki Miyauchi for their valuable advice. The author was supported by JST SPRING, Grant Number JPMJSP2110.

%%%%% Section 2 %%%%%

\section{Parametrized topological complexity}\label{Parametrized topological complexity}

Let $X\to B$ be a fibration and $I = [0, 1]$. The fiberwise path space $X_B^I$ is the subspace of the path space $X^I$ consisting of paths contained in the fiber at some point of $B$. Then the map
\[
  \Pi\colon X_B^I\to X\times_BX,\quad\gamma\mapsto(\gamma(0),\gamma(1))
\]
is a fibration, where $X\times_BX$ denotes the fiberwise product. We consider a path in $X$ as a motion in the configuration space $X$, and the map $X\to B$ represents the constraint. Then an element of $X^I_B$ is a motion in the configuration space $X$ under the constraint given by the map $X\to B$; therefore the section of $\Pi$ is a motion planning under the constraint.

\begin{definition}
  \cite[Definition 4.1]{CFW}
  The parametrized topological complexity of a fibration $X\to B$, denoted by $\TC[X\to B]$, is the minimal integer $k$ such that $B$ can be covered by open sets $U_0,U_1,\ldots,U_k$, over each of which $\Pi$ has a section. If such an integer does not exist, then we set $\TC[X\to B]=\infty$.
\end{definition}

Recall that the sectional category of a map $f\colon X\to Y$, denoted by  $\mathsf{secat}(f)$, is the minimal integer $k$ such that $Y$ can be covered by open sets $U_0,U_1,\ldots,U_k$ over each of which $f$ has a right homotopy inverse. If such an integer does not exist, then we set $\mathsf{secat}(f)=\infty$. We say that two maps $f_1\colon X_1\to Y_1$ and $f_2\colon X_2\to Y_2$ are equivalent if there is a homotopy commutative diagram
\[
  \xymatrix{
    X_1\ar[r]^{f_1}\ar[d]_\simeq&Y_1\ar[d]^\simeq\\
    X_2\ar[r]^{f_2}&Y_2
  }
\]
where the horizontal maps are homotopy equivalences. In this case, $\mathsf{secat}(f_1)=\mathsf{secat}(f_2)$. By definition, $\TC[E\to B]=\mathsf{secat}(\Pi)$. Then since $\Pi$ is equivalent to the fiberwise diagonal map $\Delta\colon X\to X\times_BX$, we get
\[
  \TC[X\to B]=\mathsf{secat}(\Delta\colon X\to X\times_BX).
\]

In particular, we may extend parametrized topological complexity by this equality for maps that are not necessarily fibrations.

We recall two typical lower bounds for parametrized topological complexity. Let $\TC(Z)$ denote the topological complexity of a space $Z$. Namely, $\TC(Z)=\TC[Z\to*]$.

\begin{proposition}
  \cite[(4.4)]{CFW}
  \label{TC fiber}
  Let $F\to X\to B$ be a homotopy fibration. Then
  \[
    \TC[X\to B]\ge\TC(F).
  \]
\end{proposition}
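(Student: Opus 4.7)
The plan is to restrict a parametrized motion planner to the fiber over a basepoint and observe that the restriction is itself an ordinary motion planner on that fiber. I use the reformulation $\TC[X\to B]=\mathsf{secat}(\Pi)$ recalled just before the proposition, where $\Pi\colon X_B^I\to X\times_B X$ is endpoint evaluation and is a fibration when $X\to B$ is. Write $p\colon X\to B$ for the fibration and set $k:=\TC[X\to B]$. Then I choose open sets $W_0,\ldots,W_k$ covering $X\times_B X$ together with sections $s_i\colon W_i\to X_B^I$ of $\Pi$, which may be taken strict by the homotopy lifting property of $\Pi$.

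Fix a basepoint $b_0\in B$ and let $F=p^{-1}(b_0)$. Since $X\times_B X\to B$ is a fibration with fiber $F\times F$ over $b_0$, the subspace $F\times F\hookrightarrow X\times_B X$ meets each $W_i$ in an open set $V_i:=W_i\cap(F\times F)$, and these $V_i$ form an open cover of $F\times F$ of cardinality $k+1$. The key observation is this: for $(x,x')\in V_i\subset F\times F$, the path $s_i(x,x')$ lies in a single fiber of $p$ by definition of $X_B^I$, and since its endpoints lie in $p^{-1}(b_0)=F$ the entire path lies in $F$. Consequently $s_i|_{V_i}$ takes values in $F^I\subset X_B^I$ and is a section of the ordinary endpoint fibration $F^I\to F\times F$ over $V_i$, exhibiting
\[
\TC(F)=\mathsf{secat}(F^I\to F\times F)\le k,
\]
as desired.

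The argument is essentially formal, but two small points require care. First, one must work with the cover of $X\times_B X$ coming from $\mathsf{secat}(\Pi)$ rather than the cover of $B$ in the literal definition, because $b_0$ lies in only one member of a $B$-cover and that alone does not produce $k+1$ open sets in $F\times F$; the identification $\TC[X\to B]=\mathsf{secat}(\Pi)$ is precisely what removes this obstruction, and is thus the one subtle ingredient of the proof. Second, for a homotopy fibration $F\to X\to B$ that is not already a Serre fibration, one first replaces $X\to B$ by an equivalent fibration, so that $\Pi$ becomes a genuine fibration with fiber homotopy equivalent to $F$, and the inequality then transfers via the invariance of $\mathsf{secat}$ under equivalence of maps recalled earlier in this section.
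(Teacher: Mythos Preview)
The paper does not supply its own proof of this proposition; it merely cites \cite[(4.4)]{CFW}. Your argument is correct and is essentially the standard one from that source: restrict a parametrized motion planner on $X\times_B X$ to the fiber $F\times F$ over a basepoint, using that paths in $X_B^I$ with endpoints in $F$ lie entirely in $F$. Your remarks on working with the cover of $X\times_B X$ (via $\mathsf{secat}(\Pi)$) rather than of $B$, and on replacing a homotopy fibration by an honest fibration, are apt and address the only places where one could stumble.
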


\begin{example}
  \label{lower bound fiber}
  Consider a homotopy fibration $S^n\to X\to B$. It is known that $\TC(S^n)=1$ for $n$ odd and $\TC(S^n)=2$ for $n$ even. Then by Proposition \ref{TC fiber}, we get
  \begin{equation}
    \label{lower bound}
    \TC[X\to B]\ge
    \begin{cases}
      1&n\text{ is odd}\\
      2&n\text{ is even.}
    \end{cases}
  \end{equation}
\end{example}

For an augmented algebra $A$, let $\mathsf{nil}(I)$ denote the nilpotency of the augmented ideal $I$ of $A$. Namely, $\mathsf{nil}(I)$ is the least integer $k$ such that $I^{k+1}=0$.

\begin{proposition}
  \cite[Proposition 7.3]{CFW}
  \label{nil}
  Let $X\to B$ be a map. Then there is an inequality
  \[
    \TC[X\to B]\ge\mathsf{nil}(\mathrm{Ker}\{\Delta^*\colon H^*(X\times_BX)\to H^*(X)\}).
  \]
\end{proposition}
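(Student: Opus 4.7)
The plan is to prove the standard cohomological lower bound for sectional category and specialize to the fiberwise diagonal, using the identification $\TC[X\to B] = \secat(\Delta)$ recorded immediately before the proposition statement. So it suffices to establish the general inequality $\secat(f) \ge \mathsf{nil}(\ker f^*)$ for any map $f\colon X\to Y$ and then substitute $f=\Delta$.

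First, I would assume $\secat(f)=k$ and fix an open cover $U_0,U_1,\ldots,U_k$ of $Y$ together with maps $s_i\colon U_i\to X$ satisfying $f\circ s_i\simeq \iota_i$, where $\iota_i\colon U_i\hookrightarrow Y$ denotes the inclusion. For any $u\in\ker f^*$, the identity $\iota_i^*u=s_i^*(f^*u)=0$ shows that $u$ restricts to zero on each $U_i$, so the long exact sequence of the pair $(Y,U_i)$ supplies a lift $\tilde u_i\in H^*(Y,U_i)$ of $u$ under the forgetful map to absolute cohomology.

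Next, given arbitrary classes $u_0,u_1,\ldots,u_k\in\ker f^*$ with chosen lifts $\tilde u_j\in H^*(Y,U_j)$, I would apply the relative cup product pairing iteratively to obtain a class
\[
\tilde u_0\smile\tilde u_1\smile\cdots\smile\tilde u_k\in H^*(Y,U_0\cup U_1\cup\cdots\cup U_k)=H^*(Y,Y)=0,
\]
since $\{U_i\}$ covers $Y$. By naturality of the cup product, the image of this class in $H^*(Y)$ is $u_0\smile u_1\smile\cdots\smile u_k$, which is therefore zero. Hence $(\ker f^*)^{k+1}=0$, and so $\mathsf{nil}(\ker f^*)\le k=\secat(f)$.

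Specializing to $f=\Delta\colon X\to X\times_BX$ and invoking $\TC[X\to B]=\secat(\Delta)$ yields the desired inequality. The only technical point to be careful with is the existence and naturality of the relative cup product $H^*(Y,U)\otimes H^*(Y,V)\to H^*(Y,U\cup V)$ for open subsets, but this is standard in singular cohomology: one represents relative classes by cochains vanishing on $U$ and $V$ respectively, whose cup product vanishes on $U\cup V$. No additional hypotheses on $X$, $B$, or the map are required beyond those already implicit in the definition of parametrized topological complexity.
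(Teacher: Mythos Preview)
The paper does not supply its own proof of this proposition; it is simply quoted with a citation to \cite[Proposition 7.3]{CFW}. Your argument is the standard \v{S}varc cup-length lower bound for sectional category, specialized to the fiberwise diagonal via the identification $\TC[X\to B]=\secat(\Delta)$, and it is correct.
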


\begin{example}
  \label{even cup-length}
  Let $S^n\to X\to B$ be a homotopy fibration where $n\ge 1$ and $B$ is simply-connected. We consider the Serre spectral sequence for this homotopy fibration. Then
  \[
    E_2\cong H^*(B)\otimes H^*(S^n).
  \]
  Let $u$ denote the generator of $H^n(S^n)\cong\Z$. Then  $a=1\otimes(u\times 1-1\times u)$ is a permanent cycle in the Serre spectral sequence for the homotopy fibration $S^n\times S^n\to X\times_BX\to B$. Clearly, $a\in\mathrm{Ker}\{\Delta^*\colon H^*(X\times_BX)\to H^*(X)\}$ is the generator of the augmented ideal. Note that $a^2=0$ for $n$ odd, $a^2\ne 0$ and $a^3=0$ for $n$ even. Then by Proposition \ref{nil}, we get the inequality \eqref{lower bound}.
\end{example}

Let us recall Farber and Weinberger's upper bound. Let $V\to B$ be a real vector bundle of rank $n+1$, $X\to B$ be the unit sphere bundle, and $\widetilde{X}\to B$ be the bundle of orthonormal $2$-frames in $V$. Then there is a fiber bundle $S^{n-1}\to\widetilde{X}\to X$.

\begin{proposition}
  \cite[Theorem 14]{FW1}
  \label{upper bound FW}
  Let $\widetilde{X}$ and $X$ be as above. Then there is an inequality
  \[
    \TC[X\to B]\le\mathsf{secat}(\widetilde{X}\to X)+1.
  \]
\end{proposition}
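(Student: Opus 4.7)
The plan is to use the identification $\TC[X\to B]=\mathsf{secat}(\Delta\colon X\to X\times_BX)$ established earlier and, after fixing a Euclidean bundle metric on $V\to B$, to construct an explicit open cover of $X\times_BX$ by $\mathsf{secat}(\widetilde{X}\to X)+2$ sets each admitting a continuous fiberwise motion planner. The construction is a parametrized version of the classical two-open-set motion planner on $S^n$: non-antipodal pairs in a common fiber are joined by the short geodesic, while antipodal pairs require a continuous choice of an orthogonal direction, i.e.\ a local section of $\widetilde{X}\to X$.

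First, set $U_0=\{(v_1,v_2)\in X\times_BX\mid v_1\ne -v_2\}$, which is open as the complement of the closed fiberwise antipodal locus. Over $U_0$, the unique shortest great-circle arc in the common fiber sphere from $v_1$ to $v_2$ gives a continuous fiberwise path, hence a local section of $\Delta$. Next, put $k=\mathsf{secat}(\widetilde{X}\to X)$ and choose an open cover $\{V_0,\ldots,V_k\}$ of $X$ with sections $s_i\colon V_i\to\widetilde{X}$ of the $2$-frame bundle. For each $i$ define the open set
\[
W_i=\{(v_1,v_2)\in X\times_BX\mid v_1\in V_i,\ \langle v_1,v_2\rangle<1/2\}
\]
and, over $W_i$, the two-leg fiberwise path that first traverses the half great circle from $v_1$ to $-v_1$ in the $2$-plane spanned by $\{v_1,s_i(v_1)\}$ and then follows the short great-circle arc from $-v_1$ to $v_2$. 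The condition $\langle v_1,v_2\rangle<1/2$ forces $v_1\ne v_2$, so the second arc is unambiguously defined and depends continuously on $(v_1,v_2)$; concatenation produces a continuous local section over $W_i$.

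Every antipodal pair $(v,-v)$ satisfies $\langle v,-v\rangle=-1<1/2$ and has $v\in V_i$ for some $i$, so the collection $U_0,W_0,\ldots,W_k$ covers $X\times_BX$ by $k+2$ open sets with local sections of $\Delta$. This yields $\TC[X\to B]=\mathsf{secat}(\Delta)\le k+1=\mathsf{secat}(\widetilde{X}\to X)+1$, as required.

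The step I expect to be most delicate is the continuity of the two-leg planner on $W_i$, in particular at pairs where $v_2$ approaches $-v_1$ from inside $W_i$, where the second leg degenerates to a constant path and must match the first leg smoothly; once this is set up carefully the rest is bookkeeping. A secondary point worth noting is the promotion of sectional-category data (right homotopy inverses over open sets) to genuine local sections of the fiber bundle $\widetilde{X}\to X$, which is standard over a paracompact base.
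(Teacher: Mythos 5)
The paper does not prove this proposition itself but cites it as \cite[Theorem 14]{FW1}; your argument is correct and reproduces the standard Farber--Weinberger construction, covering $X\times_B X$ by the non-antipodal set $U_0$ with the short-geodesic planner together with the sets $W_i$ carrying the two-leg planner built from local sections of the $2$-frame bundle. The continuity concern you flag at the end is in fact harmless: concatenating two continuously varying fiberwise paths whose joint endpoints agree is automatically continuous (no smooth matching is needed), and the only genuine technical point---promoting local right homotopy inverses of the fibration $\widetilde{X}\to X$ to genuine local sections---is routine via the homotopy lifting property, as you note.
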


We extend this upper bound to a homotopy fibration over a suspension with fiber $S^{2n+1}$.

\begin{proposition}
  \label{upper bound odd}
  Let $S^{2n+1}\to X\to\Sigma B$ be a homotopy fibration with $B$ connected. Then
  \[
    \TC[X\to\Sigma B]\le 2.
  \]
\end{proposition}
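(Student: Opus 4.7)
The plan is to adapt the argument of Proposition \ref{upper bound FW} to a homotopy fibration with suspension base by exploiting the cone decomposition of $\Sigma B$. First I would write $\Sigma B=U_+\cup U_-$ with $U_\pm=\Sigma B\setminus\{*_\mp\}$ the two open contractible cones. Since each $U_\pm$ is contractible, the restricted homotopy fibration $X|_{U_\pm}\to U_\pm$ trivialises up to fibre homotopy equivalence, so $(X\times_{\Sigma B}X)|_{U_\pm}\simeq U_\pm\times S^{2n+1}\times S^{2n+1}$ and the restriction of the fibrewise diagonal $\Delta$ is equivalent to $\mathrm{id}_{U_\pm}\times\Delta_{S^{2n+1}}$.

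Next I would invoke the classical identity $\TC(S^{2n+1})=1$, which follows from the existence of a nowhere-vanishing vector field on an odd-dimensional sphere. This identity is witnessed by the two-element open cover $\{A_0,A_1\}$ of $S^{2n+1}\times S^{2n+1}$ with $A_0=\{(x,y):y\ne -x\}$ carrying the shortest-arc section and $A_1=\{(x,y):y\ne x\}$ carrying a section built from the vector field. Pulling back through the two trivialisations yields four opens of $X\times_{\Sigma B}X$ covering it, each admitting a local section of $\Delta$.

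The heart of the argument is to reduce this four-element cover to a three-element one by merging the two non-antipodal opens (one over each cone) into a single global open $W_0$. Since the shortest-arc section is canonical and invariant under isometries of $S^{2n+1}$, one can arrange the trivialisations over the two cones so that the two pullbacks of $A_0$ agree on the overlap $U_+\cap U_-\simeq B$ and thus glue. Together with the remaining two opens $W_1^\pm$ pulled back from $A_1$ over each $U_\pm$, this furnishes a three-element cover of $X\times_{\Sigma B}X$ with local sections of $\Delta$, giving $\TC[X\to\Sigma B]=\mathsf{secat}(\Delta)\le 2$.

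The main obstacle is to carry out this gluing rigorously when $X\to\Sigma B$ is merely a homotopy fibration, with no a priori reduction of structure group to the orthogonal group. One approach is to replace $X\to\Sigma B$ by an honest Hurewicz fibration, endow it with a fibrewise metric, and use the contractibility of each cone together with the homotopy invariance of the shortest-arc section to assemble the two local non-antipodal sections into a single global one on the overlap.
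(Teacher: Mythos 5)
Your overall strategy — exploit the cone decomposition of $\Sigma B$, use $\TC(S^{2n+1})=1$, and organize the cover around antipodality on the odd sphere — is on the right track, but the proposal has a genuine gap precisely at the step you flag as the ``main obstacle,'' and that obstacle is not surmountable by the approach you sketch. Merging the two pullbacks of $A_0=\{(x,y):y\ne -x\}$ over $U_+$ and $U_-$ into a single open on which $\Delta$ has a global local section requires the two shortest-arc sections to agree (or at least be compatibly homotopic) on the overlap $U_+\cap U_-\simeq B\times(0,1)$. The shortest-arc section is canonical only with respect to a round metric, and it is invariant under isometries, not under arbitrary self-homotopy-equivalences of $S^{2n+1}$. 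For a general homotopy fibration the clutching/transition data over $B$ are not isometries (indeed the fibers are not even literal round spheres once you replace by a Hurewicz fibration), so the subset $A_0$ and the shortest-arc prescription do not transport coherently across the two trivializations. Consequently there is no reason the two pullbacks of $A_0$ can be made to coincide on the overlap, and the $4\to3$ merge is not justified.

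The paper sidesteps this entirely by using a different three-set decomposition that never requires any gluing: $\Delta(X)$ (the diagonal, with the tautological constant-path section), together with $C_+B\times A$ and $\mathrm{Int}(C_-B)\times A$, where $A=\{(x,y):x\ne y\}$ is the \emph{whole} off-diagonal of $S^{2n+1}\times S^{2n+1}$, not the non-antipodal set. Each cone piece is contractible, $A$ deformation retracts onto the antipodal set, and the resulting inclusion $C_\pm B\times A\to X\times_{\Sigma B}X$ factors up to homotopy through $S^{2n+1}\xrightarrow{(1,\tau)}S^{2n+1}\times S^{2n+1}$. Since the antipodal map $\tau$ on an odd sphere has degree $1$ it is homotopic to the identity, so this factors through $\Delta$, producing the needed right homotopy inverse. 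These three sets are disjoint (not open), which is why the paper must invoke \cite[Theorem~2.7]{G}; but no compatibility between trivializations is ever needed. The upshot: the paper's proof buys you robustness to arbitrary transition data precisely because it works with the full off-diagonal set and a degree argument, rather than a metric construction; your approach, by splitting the off-diagonal and then trying to fuse pieces across the clutching region, reintroduces the metric dependence that the generality of ``homotopy fibration'' disallows.
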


\begin{proof}
  Let $C_+B$ and $C_-B$ be the upper and lower cones of $\Sigma B$. Then since $\Sigma B$ is simply-connected, we may think that $X$ is the pushout of the cotriad
  \[
    C_+B\times S^{2n+1}\xleftarrow{(i_+\circ p_1,\mu)}B\times S^{2n+1}\xrightarrow{\mathrm{incl}}C_-B\times S^{2n+1}
  \]
  for some map $\mu$ with $\mu\vert_{*\times S^{2n+1}}=1_{S^{2n+1}}$, where $p_1$ denotes the first projection and $i_+$ denotes the inclusion. Consider the commutative diagram
  \begin{equation}
    \label{Delta pushout}
    \xymatrix{
      C_+B\times S^{2n+1}\ar[d]^{1\times\Delta}&&B\times S^{2n+1}\ar[ll]_{(i_+\circ p_1,\mu)}\ar[r]^{\mathrm{incl}}\ar[d]^{1\times\Delta}&C_-B\times S^{2n+1}\ar[d]^{1\times\Delta}\\
      C_+B\times S^{2n+1}\times S^{2n+1}&&B\times S^{2n+1}\times S^{2n+1}\ar[ll]_{(i_+\circ p_1,\mu\times_B\mu)}\ar[r]^-{\mathrm{incl}}&C_-B\times S^{2n+1}\times S^{2n+1}.
    }
  \end{equation}
  Then the pushuot of the top row is $X$ and the pushout of the second row is $X\times_{\Sigma B}X$ such that the map between the pushouts is the diagonal map $\Delta\colon X\to X\times_{\Sigma B}X$. In particular,
  \[
    X\times_{\Sigma B}X=\Delta(X)\sqcup(C_+B\times A)\sqcup(\mathrm{Int}(C_-B)\times A)
  \]
  where $\Delta(X)$ denotes the diagonal set of $X\times_{\Sigma B}X$, $A$ denotes the configuration space $\{(x, y)\in S^{2n+1}\times S^{2n+1}\mid x\neq y\}$, and $\mathrm{Int}(C_-B)$ denotes the interior of $C_-B$. Since $C_+B$ is contractible and the inclusion
  \[
  \{(x,-x)\in S^{2n+1}\times S^{2n+1}\}\to A
  \]
  is homotopy equivalent, the inclusion $C_+B\times A\to X\times_{\Sigma B}X$ is identified with the composite
  \[
    S^{2n+1}\xrightarrow{(1,\tau)}S^{2n+1}\times S^{2n+1}\xrightarrow{\mathrm{incl}}X\times_{\Sigma B}X,
  \]
  where $\tau\colon S^{2n+1}\to S^{2n+1}$ denotes the antipodal map. Clearly, $\Delta\colon X\to X\times_{\Sigma B}X$ has a section on $\Delta(X)$. Since $\tau$ has degree one, it is homotopic to $1_{S^{2n+1}}$. Then $\Delta\colon X\to X\times_{\Sigma B}X$ has a right homotopy inverse on $C_+B\times A$. We can see that $\Delta\colon X\to X\times_{\Sigma B}X$ has a right homotopy inverse on $\mathrm{Int}(C_-B)\times A$ verbatim. Thus by \cite[Theorem 2.7]{G}, $\TC[X\to \Sigma B]=\mathsf{secat}(\Delta\colon X\to X\times_{\Sigma B}X)\le 2$.
\end{proof}

%%%%% Section 3 %%%%%

\section{The complex $W(\alpha)$}\label{complex W}

We begin by recalling the result of Garc\'{i}a-Calcines and Vandembroucq \cite{GV}. Let $f\colon X\to Y$ be a map and let $Z$ be the cofiber of $f$.
The weak sectional category of $f$, denoted by $\mathsf{wsecat}(f)$, is the minimal integer $k$ such that the following composite is trivial:
\[
Y \xrightarrow{\overline{\Delta}}Y^{\wedge(k+1)}\xrightarrow{g^{\wedge(k+1)}}Z^{\wedge(k+1)},
\]
where $\overline{\Delta}$ denotes the reduced diagonal map, $Y^{\wedge(k+1)}$ denotes the smash product of $k+1$ copies of $Y$ and $g\colon Y\to Z$ denotes the cofiber map of $f$. If such an integer does not exist, then we set $\mathsf{wsecat}(f)=\infty$.

Recall that the weak category of a space $Z$, denoted by $\wcat(Z)$, is the least integer $k$ such that $\overline{\Delta}\colon Z\to Z^{\wedge(k+1)}$ is trivial. If such an integer does not exist, then we set $\wcat(Z)=\infty$. By definition, we clearly have $\mathsf{wsecat}(f)\le\wcat(Z)$.

\begin{proposition}
  \cite[Theorem 21]{GV}
  \label{wcat}
  Let $f\colon X\to Y$ be a map and $Z$ be the cofiber of $f$. If $f$ admits a left homotopy inverse, then there are inequalities
  \[
    \mathsf{nil}(\mathrm{Ker}\{f^*\colon H^*(Y)\to H^*(X)\})\le\mathsf{wcat}(Z)=\mathsf{wsecat}(f)\le\mathsf{secat}(f).
  \]
\end{proposition}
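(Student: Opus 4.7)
The plan is to prove the three assertions separately, all organized around the identity
\[
\overline{\Delta}_Z \circ g \;=\; g^{\wedge(k+1)} \circ \overline{\Delta}_Y,
\]
which holds on the nose because both sides send $y\in Y$ to $g(y)\wedge\cdots\wedge g(y)\in Z^{\wedge(k+1)}$. The left homotopy inverse hypothesis enters at only one step.

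For the cohomological bound $\mathsf{nil}(\mathrm{Ker}(f^*))\le\wcat(Z)$, I would first use exactness of the cofiber sequence in cohomology to identify $\mathrm{Ker}(f^*)=\mathrm{Im}(g^*)$. Any product of $k+1$ elements of $\mathrm{Ker}(f^*)$ then has the form $g^*(\beta_0)\smile\cdots\smile g^*(\beta_k)$, which by naturality of the reduced cross product equals $(\overline{\Delta}_Z\circ g)^*(\beta_0\times\cdots\times\beta_k)$. Taking $k=\wcat(Z)$ makes $\overline{\Delta}_Z$, and therefore the composite, null-homotopic, so the product vanishes.

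For the equality $\wcat(Z)=\mathsf{wsecat}(f)$, the direction $\mathsf{wsecat}(f)\le\wcat(Z)$ is immediate from the displayed identity. For the reverse, assume $\mathsf{wsecat}(f)=k$ and apply $[-,Z^{\wedge(k+1)}]$ to the Puppe sequence $X\to Y\to Z\xrightarrow{\delta}\Sigma X\xrightarrow{-\Sigma f}\Sigma Y$. Exactness at $[Z,Z^{\wedge(k+1)}]$ shows that $g^*(\overline{\Delta}_Z)=\overline{\Delta}_Z\circ g\simeq *$ puts $\overline{\Delta}_Z$ in the image of $\delta^*$, so it suffices to show $\delta\simeq *$. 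The left homotopy inverse $r$ gives $\Sigma r\circ\Sigma f=\Sigma(r\circ f)\simeq 1_{\Sigma X}$, and since consecutive Puppe maps compose to zero we have $\Sigma f\circ\delta\simeq *$; hence $\delta\simeq(\Sigma r\circ\Sigma f)\circ\delta\simeq *$, yielding $\overline{\Delta}_Z\simeq *$ and $\wcat(Z)\le k$.

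Finally, for $\mathsf{wsecat}(f)\le\secat(f)=k$, I would fix an open cover $Y=U_0\cup\cdots\cup U_k$ with local sections $s_i\colon U_i\to X$ of $f$. Each restriction $g|_{U_i}\simeq g\circ f\circ s_i$ is null-homotopic, since $g\circ f\simeq *$ as part of the cofiber sequence. A standard Ganea--\v{S}varc style argument, using a partition of unity subordinate to $\{U_i\}$, then glues these nullhomotopies into a single null-homotopy of $g^{\wedge(k+1)}\circ\overline{\Delta}_Y$: for each $y\in Y$, at least one of the $k+1$ smash coordinates of $g(y)\wedge\cdots\wedge g(y)$ can be deformed to the basepoint, which collapses the entire smash. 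The main obstacle in this program is the right-to-left direction of the middle equality, since only there does the left-homotopy-inverse hypothesis play a role, via the vanishing of the connecting map $\delta$; the remaining inequalities are formal consequences of cofiber-sequence exactness and a classical covering construction.
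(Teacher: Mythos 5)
The paper itself gives no proof of this statement; it is quoted verbatim from Garc\'{\i}a-Calcines and Vandembroucq \cite{GV} and used as a black box, so there is no internal argument to compare against. Your proposal supplies a correct, self-contained proof, and it correctly isolates where each hypothesis is used: the only place the left-homotopy-inverse is needed is in $\wcat(Z)\le\mathsf{wsecat}(f)$, via the observation that $\Sigma r\circ\Sigma f\simeq 1_{\Sigma X}$ together with $\Sigma f\circ\delta\simeq *$ forces the connecting map $\delta\colon Z\to\Sigma X$ to be null, which annihilates $\mathrm{Im}(\delta^*)$ in $[Z,Z^{\wedge(k+1)}]$. The other two assertions are, as you say, purely formal consequences of the identity $\overline{\Delta}_Z\circ g=g^{\wedge(k+1)}\circ\overline{\Delta}_Y$.

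Two small points worth making explicit. For the cohomological bound you should state the naturality chain precisely: with $a_i=g^*(\beta_i)$ one has
\[
a_0\smile\cdots\smile a_k=\overline{\Delta}_Y^*\bigl(a_0\times\cdots\times a_k\bigr)=\overline{\Delta}_Y^*\bigl(g^{\wedge(k+1)}\bigr)^*\bigl(\beta_0\times\cdots\times\beta_k\bigr)=(\overline{\Delta}_Z\circ g)^*\bigl(\beta_0\times\cdots\times\beta_k\bigr),
\]
which vanishes once $\overline{\Delta}_Z$ is null. For $\mathsf{wsecat}(f)\le\secat(f)$, the partition-of-unity step needs a rescaling (replace $\phi_i$ by, say, $\min(1,(k+1)\phi_i)$) so that at every point of $Y$ \emph{at least one} coordinate of the deformed map $Y\to Z^{k+1}$ reaches the basepoint at time $1$; this lands the deformed map in the fat wedge $T^{k+1}(Z)\subset Z^{k+1}$, and projecting to $Z^{\wedge(k+1)}=Z^{k+1}/T^{k+1}(Z)$ then gives the desired null-homotopy of $g^{\wedge(k+1)}\circ\overline{\Delta}_Y$. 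With these details filled in the argument is complete.
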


\begin{corollary}
  \label{lower bound wcat}
  For a map $p\colon X\to B$, there are inequalities
  \[
    \mathsf{nil}(\mathrm{Ker}\{\Delta^*\colon H^*(X\times_BX)\to H^*(X)\})\le\mathsf{wcat}(W)=\mathsf{wsecat}(\Delta)\le\TC[X\to B]
  \]
  where $W$ is the cofiber of the fiberwise diagonal map $\Delta$.
\end{corollary}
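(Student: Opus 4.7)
The plan is to apply Proposition \ref{wcat} directly with $f = \Delta\colon X \to X\times_B X$. The only nontrivial hypothesis in Proposition \ref{wcat} is that $f$ admit a left homotopy inverse. For the fiberwise diagonal, this is immediate: the first projection $\pi_1\colon X\times_B X \to X$ satisfies $\pi_1\circ\Delta = 1_X$, so $\pi_1$ is a strict (hence homotopy) left inverse of $\Delta$.

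With this hypothesis verified, Proposition \ref{wcat} applied to the map $\Delta$, whose cofiber is $W$ by hypothesis, yields the chain
\[
  \mathsf{nil}(\mathrm{Ker}\{\Delta^*\colon H^*(X\times_B X)\to H^*(X)\}) \le \mathsf{wcat}(W) = \mathsf{wsecat}(\Delta) \le \mathsf{secat}(\Delta).
\]
It then remains to identify $\mathsf{secat}(\Delta)$ with $\TC[X\to B]$. When $p\colon X\to B$ is a fibration, this is the equality $\TC[X\to B] = \mathsf{secat}(\Delta\colon X\to X\times_B X)$ recorded in Section \ref{Parametrized topological complexity}, and for a general map $p$ the right-hand side is taken as the definition of $\TC[X\to B]$ in the extension noted just after that equality. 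Substituting this identification into the above chain gives the desired inequalities.

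There is no real obstacle here; the statement is essentially a specialization of Proposition \ref{wcat} to the fiberwise diagonal, together with the definitional equality $\TC[X\to B] = \mathsf{secat}(\Delta)$. The only point that deserves explicit mention is the existence of the left homotopy inverse $\pi_1$, since this is the hypothesis that allows the equality $\mathsf{wcat}(W) = \mathsf{wsecat}(\Delta)$ (rather than just the inequality $\mathsf{wsecat}(\Delta)\le\mathsf{wcat}(W)$) to be invoked.
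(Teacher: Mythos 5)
Your proof is correct and is essentially identical to the paper's: both observe that the first projection is a left inverse of the fiberwise diagonal and then apply Proposition \ref{wcat}, using the identification $\TC[X\to B]=\mathsf{secat}(\Delta)$ from Section \ref{Parametrized topological complexity}. Nothing to add.
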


\begin{proof}
  As the diagonal map $\Delta\colon X\to X\times_BX$ has an obvious left inverse, the statement follows from Proposition \ref{wcat}.
\end{proof}

Let $S^n\to X\to S^{m+1}$ be a homotopy fibration with $m\ge n$. As in \cite[Section 3]{JW}, there is a map $\mu\colon S^m\times S^n\to S^n$ such that $\mu\vert_{*\times S^n}=1_{S^n}$ and $X$ is homotopy equivalent to the homotopy pushout of the cotriad
\begin{equation}
  S^n\xleftarrow{\mu}S^m\times S^n\xrightarrow{p_2}S^n
\end{equation}
\noindent
where $p_2$ denotes the second projection. We then obtain the cell decomposition of $X$.

\begin{lemma}
  \label{cell decomposition E}
  There is a homotopy equivalence
  \[
    X\simeq S^n\cup_{\chi(X)} e^{m+1}\cup e^{m+n+1}
  \]
  such that $X/S^n\simeq S^{m+1}\vee S^{m+n+1}$ and the composite
  \[
    X\to X/S^n\simeq S^{m+1}\vee S^{m+n+1}\xrightarrow{q_1} S^{m+1}
  \]
  is identified with the projection $X\to S^{m+1}$, where $q_1$ denotes the quotient map.
\end{lemma}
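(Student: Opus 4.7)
The plan is to start from the homotopy pushout description of $X$ as the homotopy pushout of
\[
S^n\xleftarrow{\mu}S^m\times S^n\xrightarrow{p_2}S^n
\]
recalled just above the lemma, and to identify the fiber $S^n$ of $X\to S^{m+1}$ with (say) the left copy of $S^n$ in the pushout. Collapsing this left $S^n$ in the diagram yields the pushout of $*\leftarrow S^m\times S^n\xrightarrow{p_2}S^n$, which is by definition the mapping cone $C(p_2)$; hence $X/S^n\simeq C(p_2)$.

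The key step is then to show $C(p_2)\simeq S^{m+1}\vee S^{m+n+1}$. The cleanest route is to recognise $p_2\colon S^m\times S^n\to S^n$ as the unit sphere bundle of the trivial rank-$(m+1)$ real vector bundle over $S^n$, so that its mapping cone is the associated Thom space. Since the Thom space of a trivial rank-$k$ bundle over $B$ is $\Sigma^k(B_+)$, we get
\[
C(p_2)\simeq\Sigma^{m+1}(S^n_+)\simeq S^{m+1}\vee S^{m+n+1}.
\]
Equivalently, the relation $p_2|_{\{*\}\times S^n}=1_{S^n}$ identifies the natural copy of $S^n\cup C(\{*\}\times S^n)$ inside $C(p_2)$ with a disk $D^{n+1}$, whose collapse yields $\Sigma\bigl((S^m\times S^n)/(\{*\}\times S^n)\bigr)\simeq\Sigma(S^m\vee S^{m+n})$ by a direct cell computation using the Whitehead product attaching map of the top cell of $S^m\times S^n$.

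With $X/S^n\simeq S^{m+1}\vee S^{m+n+1}$ in hand, the claimed CW structure is immediate: since $S^n\hookrightarrow X$ is a cofibration whose cofiber is a wedge of spheres in dimensions $m+1$ and $m+n+1$, the space $X$ is obtained from $S^n$ by attaching an $(m+1)$-cell and then an $(m+n+1)$-cell, and defining $\beta=\chi(X)$ to be the attaching map of the $(m+1)$-cell yields the asserted presentation. For the final assertion, the projection $X\to S^{m+1}$ is trivial on the fiber $S^n$ and so descends to a map $\bar p\colon X/S^n\to S^{m+1}$; by naturality of the pushout construction, $\bar p$ is induced by the morphism of cotriads covering $p_1\colon S^m\times S^n\to S^m$ and collapsing both $S^n$'s to a point, whose pushout is $S^{m+1}$. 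Under the Thom-space splitting above this corresponds to $\Sigma^{m+1}$ of the map $S^n_+\to S^0$ that collapses $S^n$ to the non-basepoint, which is precisely the wedge projection $q_1$. I expect the most delicate point to be ensuring that the splitting $C(p_2)\simeq S^{m+1}\vee S^{m+n+1}$ is chosen compatibly with $\bar p$, but using the natural Thom-space model essentially sidesteps this issue.
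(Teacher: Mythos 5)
The paper does not actually supply a proof of this lemma: it records it as an immediate consequence of the homotopy pushout description $S^n\xleftarrow{\mu}S^m\times S^n\xrightarrow{p_2}S^n$ of $X$, citing \cite[Section 3]{JW}. So there is no paper argument to compare with line by line; what you have given is a genuine reconstruction, and its main ideas are sound.

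Two points deserve flagging. First, the most direct route to the cell decomposition is not through the cofiber at all, but through the cell structure $S^m\times S^n\simeq(S^m\vee S^n)\cup e^{m+n}$: taking the pushout along the subcomplex $S^m\vee S^n$ gives $S^n\cup_{\mu|_{S^m\times*}}e^{m+1}$ (because $p_2|_{S^m\vee S^n}$ maps the $S^n$-wedge summand homeomorphically and collapses the $S^m$-summand), and then gluing the top cell $e^{m+n}$ of $S^m\times S^n$ produces the final $e^{m+n+1}$. This makes the attaching map $\chi(X)=\mu|_{S^m\times*}$ visible at once and simultaneously identifies $X/S^n$ and the map to $S^{m+1}$. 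Your route instead determines $X/S^n$ first and tries to read the cell structure of $X$ back off from it; the sentence ``since $S^n\hookrightarrow X$ is a cofibration whose cofiber is a wedge of spheres\ldots\ the space $X$ is obtained from $S^n$ by attaching an $(m+1)$-cell and then an $(m+n+1)$-cell'' is not automatic as stated. It can be repaired (e.g.\ using that the pair $(X,S^n)$ is a simply connected CW pair with $H_*(X,S^n)$ free of rank one in degrees $m+1$ and $m+n+1$, so $X$ admits a minimal relative CW structure of the required form), but as written it is a gap.

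Second, the identification $C(p_2)\simeq\Sigma^{m+1}(S^n_+)$ via the trivial rank-$(m+1)$ bundle over the \emph{fiber} $S^n$ is a nice observation and does sidestep choices, and you are right that the composite $X/S^n\to S^{m+1}$ corresponds under it to $\Sigma^{m+1}$ of the collapse $S^n_+\to S^0$. You correctly flag that the compatibility of this splitting with the projection is the delicate point; a short check of the induced map on $H^{m+1}$ (or an obstruction count over the top cell of $X/S^n$) closes it. Your alternative ``direct cell'' computation — collapsing the contractible $S^n\cup C(\{*\}\times S^n)\simeq D^{n+1}$ inside $C(p_2)$ to obtain $\Sigma\bigl((S^m\times S^n)/(\{*\}\times S^n)\bigr)\simeq\Sigma(S^m\vee S^{m+n})$ — is also correct, since $(S^m\times S^n)/(\{*\}\times S^n)\cong S^n\ltimes S^m\simeq S^m\vee S^{m+n}$ because $S^m$ is a suspension.
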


Here we note that $\chi(X)=\mu\vert_{S^m\times *}$. Then as in \cite[Theorem 4.4.8]{Ar}, for any map $\widehat{\mu}\colon S^m\times S^n\to S^n$ with $\widehat{\mu}\vert_{S^m\vee S^n}=\chi(X)+1_{S^n}$, there is $\alpha\in\pi_{m+n}(S^n)$ such that $\hat{\mu}$ equals the composite
\[
  \mu(\alpha)\colon S^m\times S^n\to(S^m\times S^n)\vee S^{m+n}\xrightarrow{\mu+\alpha}S^n
\]
where the first map pinches the top cell. Let $X(\alpha)$ denote the homotopy pushout of the cotriad $S^n\xleftarrow{\mu(\alpha)}S^m\times S^n\xrightarrow{p_2}S^n$. Then we get a new homotopy fibration $S^n\to X(\alpha)\to S^{m+1}$ with $\chi(X(\alpha))=\chi(X)$. Moreover, any homotopy fibration $S^n\to\widehat{X}\to S^{m+1}$ with $\chi(\widehat{X})=\chi(X)$ is equivalent to $S^n\to X(\alpha)\to S^{m+1}$ for some $\alpha\in\pi_{m+n}(S^n)$.

We define the complex $W(\alpha)$ as the cofiber of the fiberwise diagonal map
\[
  \Delta\colon X(\alpha)\to X(\alpha)\times_{S^{m+1}}X(\alpha).
\]
In the sequel, we concentrate our attention on the homotopy fibrations
\begin{equation}
  \label{fibration odd}
  S^{2n+1}\to X(\alpha)\to S^{m+1}\quad\text{with}\quad1\le n\quad\text{and}\quad2n+1\le m\le4n-1.
\end{equation}
We now consider the cell decomposition of $W(\alpha)$. We denote the universal Whitehead product $A\star B \to \Sigma A \vee \Sigma B$ by $w$ and the $k$-skeleton of a CW complex $A$ by $A_k$.

\begin{proposition}
  \label{W skeleton}
  There is a homotopy equivalence
  \[
    W(\alpha)_{4n+2}\simeq S^{2n+1}\cup_{-[1_{S^{2n+1}},1_{S^{2n+1}}]}e^{4n+2}.
  \]
\end{proposition}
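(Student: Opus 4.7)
The plan is to identify the $(4n+2)$-skeleton of $W(\alpha)$ by analyzing the cell structure of $X(\alpha)\times_{S^{m+1}}X(\alpha)$ and the image of the fiberwise diagonal inside it. By Lemma \ref{cell decomposition E}, $X(\alpha)$ has cells in dimensions $0,\,2n+1,\,m+1,\,m+2n+2$, and the hypothesis $2n+1\le m\le 4n-1$ ensures $m+2n+2\ge 4n+3$, so the top cell of $X(\alpha)$ lies strictly above the $(4n+2)$-skeleton and can be safely ignored.

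First, I view $X(\alpha)\times_{S^{m+1}}X(\alpha)$ as the total space of the induced fibration $S^{2n+1}\times S^{2n+1}\to X(\alpha)\times_{S^{m+1}}X(\alpha)\to S^{m+1}$, and take its cell structure from products (base cell)$\times$(fiber cell). The only products of total dimension at most $4n+2$ are the four cells of the fiber sitting over the basepoint, together with a single cell $e^{m+1}\times *$ of dimension $m+1$; any $e^{m+1}\times e^{2n+1}$ has dimension $m+2n+2\ge 4n+3$. This $(m+1)$-cell coincides with $\Delta(e^{m+1})$ because the fiber basepoint $(*,*)$ lies on the diagonal, and its attaching map is $\Delta\circ\chi(X(\alpha))$. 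Hence $\Delta(X(\alpha))\cap(X(\alpha)\times_{S^{m+1}}X(\alpha))_{4n+2}$ is the diagonal $S^{2n+1}$ in the fiber, together with $\Delta(e^{m+1})$.

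The main computation is the quotient by this subcomplex. Writing $S^{2n+1}\times S^{2n+1}=(S^{2n+1}\vee S^{2n+1})\cup_{[i_1,i_2]}e^{4n+2}$ with $i_1,i_2$ the two inclusions, the $(4n+1)$-connectedness of $S^{2n+1}\vee S^{2n+1}\hookrightarrow S^{2n+1}\times S^{2n+1}$ shows the diagonal $\Delta\colon S^{2n+1}\to S^{2n+1}\times S^{2n+1}$ is homotopic to $i_1+i_2$. Collapsing $\Delta(S^{2n+1})$ identifies $i_2$ with $-i_1$ and reduces the wedge to a single $S^{2n+1}$; by bilinearity of the Whitehead product, $[i_1,i_2]$ becomes $[i_1,-i_1]=-[1_{S^{2n+1}},1_{S^{2n+1}}]$. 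The cell $\Delta(e^{m+1})$ attaches into the already-collapsed $\Delta(S^{2n+1})$, so it is absorbed into the basepoint of the quotient and contributes nothing new. This yields $W(\alpha)_{4n+2}\simeq S^{2n+1}\cup_{-[1_{S^{2n+1}},1_{S^{2n+1}}]}e^{4n+2}$.

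The main obstacle is the last step: tracking the Whitehead product $[i_1,i_2]$ through the quotient and confirming that its image is $-[1_{S^{2n+1}},1_{S^{2n+1}}]$ with the correct sign. The bilinearity of the Whitehead product and the explicit identification of $\Delta$ with $i_1+i_2$ in $\pi_{2n+1}(S^{2n+1}\vee S^{2n+1})$ form the technical heart of the argument.
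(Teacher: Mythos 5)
Your proof is correct and takes essentially the same route as the paper: both reduce to computing the cofiber of the diagonal $\Delta\colon S^{2n+1}\to S^{2n+1}\times S^{2n+1}$ after identifying $(X(\alpha)\times_{S^{m+1}}X(\alpha))_{4n+2}\simeq(S^{2n+1}\times S^{2n+1})\cup e^{m+1}$ with the extra $e^{m+1}$ absorbed into the diagonal. Your collapse argument (identifying $\Delta\simeq i_1+i_2$, sending $i_2\mapsto -i_1$, and tracking $[i_1,i_2]\mapsto[1_{S^{2n+1}},-1_{S^{2n+1}}]=-[1_{S^{2n+1}},1_{S^{2n+1}}]$ by bilinearity) is an unpacking of the paper's three-by-three cofibration diagram whose middle column is $S^{4n+1}\xrightarrow{w}S^{2n+1}\vee S^{2n+1}\xrightarrow{1+(-1)}S^{2n+1}$.
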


\begin{proof}
  Recall that $m+1 < 4n+2 < m+2n+2$ by \eqref{fibration odd}. By \eqref{Delta pushout} and Lemma \ref{cell decomposition E},
  \[
  (X(\alpha)\times_{S^{m+1}}X(\alpha))_{4n+2}\simeq(S^{2n+1}\times S^{2n+1})\cup e^{m+1}
  \]
  such that the diagram
  \begin{equation}
    \label{Delta extension}
    \xymatrix{
      X(\alpha)_{4n+2}\ar[r]^(.37)\Delta\ar@{=}[d]&(X(\alpha)\times_{S^{m+1}}X(\alpha))_{4n+2}\ar@{=}[d]\\
      S^{2n+1}\cup e^{m+1}\ar[r]^(.41){\widetilde{\Delta}}\ar[d]&(S^{2n+1}\times S^{2n+1})\cup e^{m+1}\ar[d]\\
      S^{m+1}\ar@{=}[r]&S^{m+1}
    }
  \end{equation}
  commutes up to homotopy, where $\widetilde{\Delta}$ is an extension of the diagonal map $\Delta\colon S^{2n+1}\to S^{2n+1}\times S^{2n+1}$.
  Then $W(\alpha)_{4n+2}$ is the cofiber of $\Delta\colon S^{2n+1}\to S^{2n+1}\times S^{2n+1}$, and so there is a homotopy commutative diagram
  \[
    \xymatrix{
      \ast\ar[r]\ar[d]&S^{4n+1}\ar@{=}[r]\ar[d]_w&S^{4n+1}\ar[d]\\
      S^{2n+1}\ar[r]^(.4)\nabla\ar@{=}[d]&S^{2n+1}\vee S^{2n+1}\ar[d]\ar[r]^(.6){1+(-1)}&S^{2n+1}\ar[d]\\
      S^{2n+1}\ar[r]^(.4)\Delta&S^{2n+1}\times S^{2n+1}\ar[r]&W(\alpha)_{4n+2}
    }
  \]
  such that all rows are homotopy cofibrations, where $\nabla$ denotes the suspension comultiplication. Since the left and middle columns are homotopy cofibrations, so is the right column. Thus since $(1_{S^{2n+1}}+(-1_{S^{2n+1}}))\circ w=[1_{S^{2n+1}},-1_{S^{2n+1}}]=-[1_{S^{2n+1}},1_{S^{2n+1}}]$, the statement follows.
\end{proof}

\begin{corollary}
  \label{W skeleton suspension}
  The complex $W(\alpha)_{4n+2}$ is a suspension, and there is a homotopy equivalence
  \[
    \Sigma (W(\alpha)_{4n+2})\simeq S^{2n+2}\vee S^{4n+3}.
  \]
\end{corollary}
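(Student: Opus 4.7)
The plan is to deduce both assertions from Proposition \ref{W skeleton}, which identifies $W(\alpha)_{4n+2}$ with the mapping cone of the Whitehead square $-[\iota,\iota]\colon S^{4n+1}\to S^{2n+1}$, where $\iota=1_{S^{2n+1}}$. For the splitting $\Sigma W(\alpha)_{4n+2}\simeq S^{2n+2}\vee S^{4n+3}$, I would invoke the classical fact that the suspension of any Whitehead product is null-homotopic, so $\Sigma(-[\iota,\iota])=0$. Applying $\Sigma$ to the cofiber sequence $S^{4n+1}\xrightarrow{-[\iota,\iota]} S^{2n+1}\to W(\alpha)_{4n+2}$ then yields $\Sigma W(\alpha)_{4n+2}\simeq S^{2n+2}\cup_{0}e^{4n+3}\simeq S^{2n+2}\vee S^{4n+3}$.

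For the harder statement that $W(\alpha)_{4n+2}$ is itself a suspension, my plan is to show that the attaching map is already a suspension at the unsuspended level: if $-[\iota,\iota]=\Sigma g$ for some $g\colon S^{4n}\to S^{2n}$, then $W(\alpha)_{4n+2}\simeq\Sigma(S^{2n}\cup_g e^{4n+1})$. By exactness of the EHP sequence
\[
\pi_{4n}(S^{2n})\xrightarrow{E}\pi_{4n+1}(S^{2n+1})\xrightarrow{H}\pi_{4n+1}(S^{4n+1})=\Z,
\]
such a $g$ exists precisely when the James--Hopf invariant $H([\iota_{2n+1},\iota_{2n+1}])$ vanishes. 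This is classical: James's formula gives $H([\iota_m,\iota_m])=1+(-1)^m$, which is $0$ when $m=2n+1$ is odd. The main obstacle is nothing more than correctly invoking this formula with the right sign conventions, after which everything follows by formal manipulation of cofiber sequences.
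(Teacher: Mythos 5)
Your proposal is correct and takes essentially the same route as the paper: both desuspend $-[1_{S^{2n+1}},1_{S^{2n+1}}]$ via exactness of the same EHP sequence and then use $\Sigma[1_{S^{2n+1}},1_{S^{2n+1}}]=0$ to obtain the wedge splitting. The only cosmetic difference is in how $H([1_{S^{2n+1}},1_{S^{2n+1}}])=0$ is established --- the paper simply observes that $\pi_{4n+1}(S^{2n+1})$ is finite so every homomorphism to $\pi_{4n+1}(S^{4n+1})\cong\Z$ vanishes, whereas you invoke James's formula $H([\iota_m,\iota_m])=1+(-1)^m$; both are valid.
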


\begin{proof}
  Consider the EHP sequence
  \[
    \pi_{4n}(S^{2n})\xrightarrow{E}\pi_{4n+1}(S^{2n+1})\xrightarrow{H}\pi_{4n+1}(S^{4n+1})\xrightarrow{P}\pi_{4n-1}(S^{2n}).
  \]
  Then $\pi_{4n+1}(S^{2n+1})$ is a finite abelian group, and so the map $H$ is trivial. Then the map $E$ is surjective, implying that the Whitehead product $[1_{S^{2n+1}},1_{S^{2n+1}}]$ is a suspension. Thus the first statement follows from Proposition \ref{W skeleton}. The second statement follows from Proposition \ref{W skeleton} and the fact that the suspension of $[1_{S^{2n+1}},1_{S^{2n+1}}]$ is trivial.
\end{proof}

We describe $W(\alpha)$ as the homotopy pushout of a certain cotriad.
Let $A \ltimes B$ be the half-smash product $A \times B / (A \times \ast)$.

\begin{lemma}
  \label{W}
  The complex $W(\alpha)$ is homotopy equivalent to the homotopy pushout of the cotriad
  \[
    W(\alpha)_{4n+2}\xleftarrow{\tilde{\mu}(\alpha)}S^m\ltimes W(\alpha)_{4n+2}\xrightarrow{p_2}W(\alpha)_{4n+2}
  \]
  such that $\tilde{\mu}(\alpha)\vert_{*\times W(\alpha)_{4n+2}}=1_{W(\alpha)_{4n+2}}$, where $p_2$ denotes the second projection.
\end{lemma}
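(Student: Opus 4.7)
The plan is to mimic the argument in the proof of Proposition~\ref{upper bound odd}. The diagram \eqref{Delta pushout}, applied with $B = S^m$ and $\mu = \mu(\alpha)$, presents both $X(\alpha)$ and $X(\alpha) \times_{S^{m+1}} X(\alpha)$ as pushouts of compatible cotriads, and the fiberwise diagonal $\Delta$ is induced termwise by $1_A \times \Delta_{S^{2n+1}} \colon A \times S^{2n+1} \to A \times (S^{2n+1} \times S^{2n+1})$ with $A$ running over $C_\pm S^m$ and $S^m$. Since mapping cones (being themselves pushouts) commute with pushouts of spaces, $W(\alpha) = \mathrm{cof}(\Delta)$ is the pushout of the cotriad of termwise cofibers of these vertical maps.

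Next I would identify each cofiber. For any space $A$, the quotient $(A \times S^{2n+1} \times S^{2n+1})/(A \times \Delta S^{2n+1})$ is canonically homeomorphic to the half-smash $A \ltimes W_0$, where $W_0 := (S^{2n+1} \times S^{2n+1})/\Delta S^{2n+1} \simeq W(\alpha)_{4n+2}$ by Proposition~\ref{W skeleton}. When $A$ is contractible, as for $C_\pm S^m$, the projection $A \ltimes W_0 \to W_0$ is a homotopy equivalence, so the cotriad of cofibers has the form
\[
  W(\alpha)_{4n+2} \xleftarrow{g_+} S^m \ltimes W(\alpha)_{4n+2} \xrightarrow{g_-} W(\alpha)_{4n+2}.
\]

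Finally I would identify the maps $g_\pm$. Because the right map of \eqref{Delta pushout} is the obvious inclusion, $g_-$ is induced by $p_{23} \colon S^m \times S^{2n+1} \times S^{2n+1} \to S^{2n+1} \times S^{2n+1}$ and descends to the second projection $p_2$. The map $g_+$, which we denote $\tilde{\mu}(\alpha)$, is induced by $(i_+ \circ p_1, \mu(\alpha) \times_{S^m} \mu(\alpha))$; since $\mu(\alpha)|_{* \times S^{2n+1}} = 1_{S^{2n+1}}$, its restriction to $\{*\} \times S^{2n+1} \times S^{2n+1}$ is the identity, yielding $\tilde{\mu}(\alpha)|_{* \times W(\alpha)_{4n+2}} = 1_{W(\alpha)_{4n+2}}$. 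The main bookkeeping is the identification of the middle cofiber with a half-smash and the verification that the homotopy equivalences $C_\pm S^m \ltimes W_0 \simeq W_0$ are compatible with the structure maps out of $S^m \ltimes W_0$; both reduce to standard colimit manipulations, aided by the fact that the inclusions $B \times S^{2n+1} \hookrightarrow C_\pm B \times S^{2n+1}$ are cofibrations.
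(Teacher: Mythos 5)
Your proposal is correct and follows essentially the same route as the paper: specialize the diagram \eqref{Delta pushout} to $B = S^m$, $\mu = \mu(\alpha)$, take cofibers of the columns (using that cofibers commute with homotopy pushouts), identify each cofiber as a half-smash $A \ltimes W(\alpha)_{4n+2}$, and use contractibility of $C_\pm S^m$ to collapse the outer terms. The one minor point worth flagging is the citation: the identification of $(S^{2n+1}\times S^{2n+1})/\Delta S^{2n+1}$ with $W(\alpha)_{4n+2}$ is established inside the \emph{proof} of Proposition~\ref{W skeleton} rather than its statement, which is what the paper cites.
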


\begin{proof}
  Recall the commutative diagram \eqref{Delta pushout} such that the fiberwise diagonal map $\Delta\colon X\to X\times_{\Sigma B}X$ is the natural map between the (homotopy) pushout of the two rows.
  On the other hand, as in the proof of Proposition \ref{W skeleton}, $W(\alpha)_{4n+2}$ is the cofiber of the diagonal map $\Delta\colon S^{2n+1}\to S^{2n+1}\times S^{2n+1}$. We set $B = S^m$ and $\mu = \mu(\alpha)$. By taking the cofibers of the columns, we get the cotriad
  \[
    C_+S^m\ltimes W(\alpha)_{4n+2}\xleftarrow{\tilde{\mu}(\alpha)} S^m\ltimes W(\alpha)_{4n+2}\xrightarrow{i_-\ltimes 1} C_-S^m\ltimes W(\alpha)_{4n+2}.
  \]
  Thus the cofiber of $\Delta\colon X(\alpha)\to X(\alpha)\times_{S^{m+1}}X(\alpha)$ is the homotopy pushout of the above cotriad. Moreover, $C_+S^m$ is contractible and the composite
  \[
  S^m\ltimes W(\alpha)_{4n+2} \xrightarrow{i_+\ltimes 1} C_+S^m\ltimes W(\alpha)_{4n+2} \xrightarrow{p_2} W(\alpha)_{4n+2}
  \]
  is the second projection $p_2$. By definition, $\tilde{\mu}(\alpha)\vert_{*\times W(\alpha)_{4n+2}}=1_{W(\alpha)_{4n+2}}$.
  Thus the proof is complete.
\end{proof}

\noindent
By Proposition \ref{W skeleton}, $W(\alpha)_{4n+2}=W(0)_{4n+2}$. Let $i\colon S^{2n+1}\to W(\alpha)_{4n+2}=W(0)_{4n+2}$ denote the bottom cell inclusion, and define
\begin{multline*}
  \Lambda(\alpha)\colon S^m\ltimes S^{2n+1}\xrightarrow{\mathrm{pinch}}(S^m\ltimes S^{2n+1})\vee S^{m+2n+1}\\
  \xrightarrow{\tilde{\mu}(0)\circ(1\ltimes i)+i\circ\alpha}W(0)_{4n+2}=W(\alpha)_{4n+2}.
\end{multline*}

\begin{lemma}
  \label{coaction2}
  $\tilde{\mu}(\alpha)\circ(1_{S^m}\ltimes i) = \Lambda(\alpha)$.
\end{lemma}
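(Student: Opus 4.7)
The plan is to verify the identity cell-by-cell on the splitting $S^m \ltimes S^{2n+1} \simeq S^{2n+1} \vee S^{m+2n+1}$ (which follows from $A \ltimes B = A_+ \wedge B$), using the explicit formula for $\tilde{\mu}(\alpha)$ arising from the cofiber construction in Lemma \ref{W} and the defining decomposition $\mu(\alpha) = (\mu + \alpha) \circ \mathrm{pinch}$.

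First, I would unwind the construction from Lemma \ref{W}. Identifying $W(\alpha)_{4n+2}$ with $(S^{2n+1} \times S^{2n+1})/\Delta S^{2n+1}$ and writing classes as $[s_1, s_2]$, one obtains $\tilde{\mu}(\alpha)(b, [s_1, s_2]) = [\mu(\alpha)(b, s_1), \mu(\alpha)(b, s_2)]$ after identifying $C_+S^m \ltimes W(\alpha)_{4n+2} \simeq W(\alpha)_{4n+2}$. Taking the representative $i(s) = [s, \ast]$ for the bottom cell inclusion (as in the Puppe comparison of Proposition \ref{W skeleton}) and using $\mu(\alpha)(b, \ast) = \chi(X)(b)$ gives
\[
\tilde{\mu}(\alpha) \circ (1_{S^m} \ltimes i)(b, s) = [\mu(\alpha)(b, s), \chi(X)(b)].
\]
On the bottom $S^{2n+1}$ summand both sides of the claim evaluate to $i(s)$ (using $\mu(\alpha)(\ast, s) = s$), so the problem reduces to comparing top-summand restrictions as elements of $\pi_{m+2n+1}(W(\alpha)_{4n+2})$.

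Second, I compute the top-cell contribution using that $\mu(\alpha)$ and $\mu = \mu(0)$ agree on $S^m \vee S^{2n+1}$ (both equal $\chi(X) + 1_{S^{2n+1}}$) but differ on the top cell precisely by $\alpha$. Consequently, the pair-valued maps $(\mu(\alpha), \chi(X) \circ p_1)$ and $(\mu, \chi(X) \circ p_1)$ from $S^m \times S^{2n+1}$ to $(S^{2n+1})^2$ agree on $S^m \vee S^{2n+1}$, and via the Puppe coaction associated with the cofibration $S^m \vee S^{2n+1} \to S^m \times S^{2n+1} \to S^{m+2n+1}$, they are related by a well-defined top-cell class in $\pi_{m+2n+1}(S^{2n+1})^{\oplus 2} \cong \pi_{m+2n+1}((S^{2n+1})^2)$; the first coordinate contributes $\alpha$ (by the defining pinch decomposition) and the second contributes $0$ (since $\chi(X) \circ p_1$ is unchanged), giving $(\alpha, 0)$. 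Projecting by $q\colon (S^{2n+1})^2 \to W(\alpha)_{4n+2}$ and using $i = q \circ j_1$ for the first-factor inclusion $j_1(s) = (s, \ast)$, this translates to the top-summand class $q_*(\alpha, 0) = i_*(\alpha) = [i \circ \alpha]$.

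Finally, I assemble: the top-summand restriction of $\tilde{\mu}(\alpha) \circ (1 \ltimes i)$ equals that of $\tilde{\mu}(0) \circ (1 \ltimes i)$ plus $i \circ \alpha$, matching the top-summand restriction of $\Lambda(\alpha) = (\tilde{\mu}(0) \circ (1 \ltimes i) + i \circ \alpha) \circ \mathrm{pinch}$. Together with agreement on the bottom summand, this yields $\tilde{\mu}(\alpha) \circ (1_{S^m} \ltimes i) = \Lambda(\alpha)$. The main obstacle will be making the top-cell ``difference'' rigorous: it must be framed via the Puppe coaction rather than naive subtraction in $(S^{2n+1})^2$ (which is not an $H$-space in general), and the identification $i = q \circ j_1$ must be confirmed from the description of the bottom cell inclusion in Proposition \ref{W skeleton}.
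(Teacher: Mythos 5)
Your strategy — identify $W(\alpha)_{4n+2}$ with $(S^{2n+1}\times S^{2n+1})/\Delta S^{2n+1}$, get the pointwise formula $\tilde{\mu}(\alpha)\circ(1_{S^m}\ltimes i)(b,s)=[\mu(\alpha)(b,s),\chi(X)(b)]$, and exploit the coaction relation $\mu(\alpha)=\mu(0)+\alpha$ — does reproduce what the paper states, without justification, as its first step, namely the equality $\tilde{\mu}(\alpha)\circ(1_{S^m}\ltimes i)\circ q_1=\Lambda(\alpha)\circ q_1$, where $q_1\colon S^m\times S^{2n+1}\to S^m\ltimes S^{2n+1}$ is the projection. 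All of your pair-valued-map/coaction manipulations live on the source $S^m\times S^{2n+1}$, and the ``difference class'' you extract is the Puppe coaction class for the cofibration $S^m\vee S^{2n+1}\to S^m\times S^{2n+1}\to S^{m+2n+1}$.

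The gap is in the assembly. You pass directly from ``the two $q_1$-precomposed maps $S^m\times S^{2n+1}\to W(\alpha)_{4n+2}$ differ by $i\circ\alpha$ via coaction'' to ``the top-summand restrictions (i.e.\ the composites with $\bar s\colon S^{m+2n+1}\to S^m\ltimes S^{2n+1}$) differ by $i\circ\alpha$ in $\pi_{m+2n+1}(W(\alpha)_{4n+2})$.'' These are not the same statement: $\bar s$ does not factor through $q_1$ up to homotopy (there is no unstable section of $q_1$, since $S^m\times S^{2n+1}$ has a nontrivial cup product and $S^{2n+1}\vee S^{m+2n+1}$ does not), so you cannot read off $F\circ\bar s-G\circ\bar s$ from $F\circ q_1$ and $G\circ q_1$. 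Equivalently, from $F\circ q_1\simeq G\circ q_1$ with $F,G\colon S^m\ltimes S^{2n+1}\to Y$ you may not in general conclude $F\simeq G$; that is exactly the injectivity of $q_1^*\colon[S^m\ltimes S^{2n+1},Y]\to[S^m\times S^{2n+1},Y]$, and it is never automatic (the coaction of $\pi_{m+2n+1}(Y)$ on $[S^m\times S^{2n+1},Y]$ can have nontrivial stabilizers). The paper's proof supplies precisely this missing ingredient: it first records the equality after precomposition with $q_1$ — which is essentially your computation — and then invokes Corollary \ref{W skeleton suspension}, that $W(\alpha)_{4n+2}$ is a suspension, to conclude that $q_1^*$ is injective. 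Your list of potential obstacles (coaction vs.\ naive subtraction, and the identification $i=q\circ j_1$) is reasonable but does not touch this descent problem, which is the real crux of the lemma and the reason the suspension hypothesis on $W(\alpha)_{4n+2}$ enters.
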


\begin{proof}
  Let $q_1\colon S^m\times S^{2n+1}\to S^m\ltimes S^{2n+1}$ be the projection. Then
  \[
    \tilde{\mu}(\alpha)\circ(1_{S^m}\ltimes i)\circ q_1=\Lambda(\alpha)\circ q_1.
  \]
  By Corollary \ref{W skeleton suspension}, $W(\alpha)_{4n+2}$ is a suspension, so the map $q_1^*\colon[S^m\ltimes S^{2n+1},W(\alpha)_{4n+2}]\to[S^m\times S^{2n+1},W(\alpha)_{4n+2}]$ is injective and completes the proof.
\end{proof}

We now give a cell structure of $W(\alpha)$, which has $4$ cells. Recall that, for a suspension space $A$, there is a map $s\colon \Sigma^{m} A\to S^m\ltimes  A$ which induces a homotopy cofibration
\begin{equation}
  \label{s}
  \Sigma^{m} A\xrightarrow{s}S^m\ltimes A\xrightarrow{p_2}A
\end{equation}
\noindent
and is natural with respect to suspensions.

\begin{proposition}
  \label{phi}
  There is a map $\bar{\mu}(\alpha)\colon\Sigma^m(W(\alpha)_{4n+2})\to W(\alpha)_{4n+2}$ such that $W(\alpha)$ is homotopy equivalent to its cofiber.
\end{proposition}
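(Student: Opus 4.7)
The plan is to identify $W(\alpha)$ as a cofiber by first splitting the half-smash product appearing in Lemma \ref{W} and then simplifying the resulting pushout. By Corollary \ref{W skeleton suspension}, $W(\alpha)_{4n+2}$ is a suspension, hence so is $\Sigma^m W(\alpha)_{4n+2}$. Combined with the fact that $p_2$ admits the section $a\mapsto[*,a]$, the cofibration \eqref{s} applied with $A = W(\alpha)_{4n+2}$ splits as
\[
S^m \ltimes W(\alpha)_{4n+2} \simeq W(\alpha)_{4n+2} \vee \Sigma^m W(\alpha)_{4n+2},
\]
where $s$ becomes the inclusion of the second wedge summand and $p_2$ becomes the projection onto the first.

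Define $\bar{\mu}(\alpha) := \tilde{\mu}(\alpha) \circ s \colon \Sigma^m W(\alpha)_{4n+2} \to W(\alpha)_{4n+2}$. By the normalization $\tilde{\mu}(\alpha)|_{*\times W(\alpha)_{4n+2}} = 1$ from Lemma \ref{W}, the map $\tilde{\mu}(\alpha)$ corresponds under the splitting to $(1_{W(\alpha)_{4n+2}}, \bar{\mu}(\alpha))$. Substituting into the pushout description in Lemma \ref{W}, $W(\alpha)$ becomes the homotopy pushout of
\[
W(\alpha)_{4n+2} \xleftarrow{(1,\bar{\mu}(\alpha))} W(\alpha)_{4n+2} \vee \Sigma^m W(\alpha)_{4n+2} \xrightarrow{(1,*)} W(\alpha)_{4n+2}.
\]

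Both structure maps restrict to $1_{W(\alpha)_{4n+2}}$ on the first wedge summand. Performing the pushout in stages — first over the subdiagram involving only the first summand, which yields $W(\alpha)_{4n+2}$, and then attaching the second summand, which contributes the mapping cone on $\bar{\mu}(\alpha)$ — I conclude that $W(\alpha)$ is homotopy equivalent to the cofiber of $\bar{\mu}(\alpha)$. The crux of the argument is the splitting of $S^m \ltimes W(\alpha)_{4n+2}$, which relies on $W(\alpha)_{4n+2}$ being a suspension via Corollary \ref{W skeleton suspension}; once this splitting is in place, the remaining identifications and pushout reduction are formal, so no serious technical obstacle is anticipated.
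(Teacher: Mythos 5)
Your proof is correct and follows essentially the same route as the paper: both define $\bar{\mu}(\alpha)=\tilde{\mu}(\alpha)\circ s$ via the cofibration \eqref{s} and then compare the pushout from Lemma \ref{W} with the cofiber of $\bar{\mu}(\alpha)$. The only difference is cosmetic bookkeeping — you first split $S^m\ltimes W(\alpha)_{4n+2}\simeq W(\alpha)_{4n+2}\vee\Sigma^m W(\alpha)_{4n+2}$ and compute the pushout in stages, whereas the paper forms the $3\times 3$ diagram \eqref{W splitting} with cofibration columns and takes pushouts of the rows directly, never needing the explicit wedge decomposition.
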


\begin{proof}
  By \eqref{s} and Corollary \ref{W}, there is a homotopy commutative diagram
  \begin{equation}
    \label{W splitting}
    \xymatrix{
      \ast\ar[d]&\Sigma^m (W(\alpha)_{4n+2})\ar[l]\ar[r]^{\bar{\mu}(\alpha)}\ar[d]^s&W(\alpha)_{4n+2}\ar@{=}[d]\\
      W(\alpha)_{4n+2}\ar@{=}[d]&S^m\ltimes W(\alpha)_{4n+2}\ar[l]_{p_2}\ar[r]^{\tilde{\mu}(\alpha)}\ar[d]^{p_2}&W(\alpha)_{4n+2}\ar[d]\\
      W(\alpha)_{4n+2}\ar@{=}[r]&W(\alpha)_{4n+2}\ar[r]&\ast
    }
  \end{equation}
  where all columns are homotopy cofibrations. Now we take the homotopy pushouts of the rows to get a homotopy cofibration
  \[
    Z\to W(\alpha)\to*
  \]
  by Lemma \ref{W}, where $Z$ denotes the cofiber of the map $\bar{\mu}(\alpha)\colon\Sigma^m(W(\alpha)_{4n+2})\to W(\alpha)_{4n+2}$. Thus we obtain $Z\simeq W(\alpha)$, as stated.
\end{proof}

\begin{corollary}
  \label{cell decomposition}
  There are maps $\lambda(\alpha)\colon S^{m+2n+1}\to W(\alpha)_{4n+2}$ and $\phi(\alpha)\colon S^{m+4n+2}\to W(\alpha)_{4n+2}$ such that
  \[
    W(\alpha)\simeq W(\alpha)_{4n+2}\cup_{\lambda(\alpha)}e^{m+2n+2}\cup_{\phi(\alpha)}e^{m+4n+3}.
  \]
\end{corollary}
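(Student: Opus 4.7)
The plan is to combine Proposition \ref{phi} with the wedge decomposition given by Corollary \ref{W skeleton suspension}. By Proposition \ref{phi}, $W(\alpha)$ is homotopy equivalent to the mapping cone of $\bar{\mu}(\alpha)\colon\Sigma^m(W(\alpha)_{4n+2})\to W(\alpha)_{4n+2}$, so it suffices to identify the source as a wedge of two spheres and then split $\bar{\mu}(\alpha)$ into its restrictions on each summand.

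Under the standing hypothesis \eqref{fibration odd}, one has $m\ge 2n+1\ge 3$, and in particular $m-1\ge 2$. Consequently the splitting
\[
\Sigma(W(\alpha)_{4n+2})\simeq S^{2n+2}\vee S^{4n+3}
\]
furnished by Corollary \ref{W skeleton suspension} may be suspended $m-1$ further times to yield
\[
\Sigma^m(W(\alpha)_{4n+2})\simeq S^{m+2n+1}\vee S^{m+4n+2}.
\]
I would then define $\lambda(\alpha)$ and $\phi(\alpha)$ as the composites of $\bar{\mu}(\alpha)$ with the two wedge inclusions of $S^{m+2n+1}$ and $S^{m+4n+2}$ under the inverse of this equivalence. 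Since the mapping cone of a map from $S^a\vee S^b$ is the adjunction space obtained by attaching cells of dimensions $a+1$ and $b+1$ along the two restrictions, taking $a=m+2n+1$ and $b=m+4n+2$ produces exactly the claimed cell structure.

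I do not anticipate a genuine obstacle: Proposition \ref{phi} has already reduced the problem to a mapping cone, and the wedge splitting after one suspension is supplied by Corollary \ref{W skeleton suspension}. The only arithmetic point requiring attention is the inequality $m-1\ge 2$, so that $\Sigma^{m-1}$ can legitimately be applied to the once-suspended splitting; this is automatic from $m\ge 2n+1$ with $n\ge 1$.
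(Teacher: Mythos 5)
Your argument is correct and is exactly the paper's own proof, which consists of the single instruction to combine Corollary~\ref{W skeleton suspension} with Proposition~\ref{phi}; you have simply written out the intermediate steps. (One small remark: all that is needed to desuspend once is $m\ge 1$, so the observation that $m-1\ge 2$ is true but stronger than required.)
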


\begin{proof}
  Combine Corollary \ref{W skeleton suspension} and Proposition \ref{phi}.
\end{proof}

\begin{corollary}
  \label{cat(W)}
  $1\le\mathsf{cat}(W(\alpha))\le2$.
\end{corollary}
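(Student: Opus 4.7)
The plan is to extract both bounds from the cell decomposition already built in this section, with the lower bound being purely cohomological and the upper bound following from the cofiber structure of $W(\alpha)$ developed in Proposition \ref{phi}.

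For the lower bound $\mathsf{cat}(W(\alpha))\ge 1$, I would simply observe that $W(\alpha)$ is not contractible. By Corollary \ref{cell decomposition}, its cells lie in dimensions $2n+1$, $4n+2$, $m+2n+2$, $m+4n+3$, and under the standing hypothesis $2n+1\le m\le 4n-1$ all of these except the bottom one strictly exceed $2n+1$. Consequently $H^{2n+1}(W(\alpha);\Z)\cong\Z$, so $W(\alpha)$ is not contractible and therefore $\mathsf{cat}(W(\alpha))\ge 1$.

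For the upper bound, I would exploit the cofiber presentation given by Proposition \ref{phi}. Corollary \ref{W skeleton suspension} already shows that $W(\alpha)_{4n+2}$ is a suspension, hence a co-H-space, so $\mathsf{cat}(W(\alpha)_{4n+2})\le 1$. Proposition \ref{phi} presents $W(\alpha)$ as the mapping cone of the single map $\bar{\mu}(\alpha)\colon\Sigma^m(W(\alpha)_{4n+2})\to W(\alpha)_{4n+2}$, so $W(\alpha)$ is obtained from $W(\alpha)_{4n+2}$ by attaching one cone. Applying the classical inequality $\mathsf{cat}(Y\cup_f CA)\le\mathsf{cat}(Y)+1$ to this presentation then yields $\mathsf{cat}(W(\alpha))\le\mathsf{cat}(W(\alpha)_{4n+2})+1\le 2$.

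I do not anticipate any real obstacle: the genuinely substantive input is the suspension character of $W(\alpha)_{4n+2}$, which has already been arranged by the EHP argument in Corollary \ref{W skeleton suspension}, while the mapping-cone inequality for Lusternik–Schnirelmann category is a standard fact. Everything else is a direct assembly of the material already proved in this section.
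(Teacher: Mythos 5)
Your proof is correct and takes essentially the same route as the paper: the paper deduces $\cat(W(\alpha))\le 2$ from the observation that $W(\alpha)$ has cone-length at most $2$ (Corollary \ref{W skeleton suspension} plus Proposition \ref{phi}), which is precisely your mapping-cone inequality argument unwound, and the lower bound in both cases is just non-contractibility of $W(\alpha)$.
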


\begin{proof}
  By Corollary \ref{W skeleton suspension} and Proposition \ref{phi}, the cone-length of $W(\alpha)$ is at most $2$, implying $\mathsf{cat}(W(\alpha))\le 2$. On the other hand, we have $\mathsf{cat}(W(\alpha))\ge 1$ as $W(\alpha)$ is not contractible.
\end{proof}

We compare the attaching maps $\lambda(\alpha)$ and $\lambda(0)$ of $(m+2n+2)$-cell. By Proposition \ref{W skeleton}, $W(\alpha)_{4n+2}=W(0)_{4n+2}$; thus, both $\lambda(\alpha)$ and $\lambda(0)$ are maps into $W(0)_{4n+2}$.

\begin{proposition}
  \label{coaction3}
  $\lambda(\alpha) = \lambda(0) + i\circ\alpha$.
\end{proposition}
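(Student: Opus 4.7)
The plan is to trace $\lambda(\alpha)$ back through the construction of $W(\alpha)$ in Proposition \ref{phi}, express it as $\Lambda(\alpha)$ composed with an explicit inclusion, and then exploit the suspension cogroup structure on $S^m\ltimes S^{2n+1}$ to separate out the $\alpha$-dependent summand.

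First, by Corollary \ref{W skeleton suspension} there is a splitting $\Sigma^m W(\alpha)_{4n+2}\simeq S^{m+2n+1}\vee S^{m+4n+2}$ in which the bottom wedge summand corresponds to the suspension $\Sigma^m i$ of the bottom cell inclusion $i\colon S^{2n+1}\to W(\alpha)_{4n+2}=W(0)_{4n+2}$. By Proposition \ref{phi} and Corollary \ref{cell decomposition}, the attaching map $\lambda(\alpha)$ is the restriction of $\bar\mu(\alpha)$ to this bottom wedge summand, so $\lambda(\alpha)=\bar\mu(\alpha)\circ\Sigma^m i$. The top-right square of diagram \eqref{W splitting} gives $\bar\mu(\alpha)=\tilde\mu(\alpha)\circ s$, and naturality of $s$ applied to $i$ yields $s\circ\Sigma^m i=(1_{S^m}\ltimes i)\circ s_0$, where $s_0\colon S^{m+2n+1}\to S^m\ltimes S^{2n+1}$ is the map $s$ of \eqref{s} for $A=S^{2n+1}$. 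Combining these with Lemma \ref{coaction2},
\[
\lambda(\alpha)=\tilde\mu(\alpha)\circ(1_{S^m}\ltimes i)\circ s_0=\Lambda(\alpha)\circ s_0.
\]

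Next I would compute $\Lambda(\alpha)\circ s_0$ from the definition of $\Lambda(\alpha)$. Let $c\colon S^m\ltimes S^{2n+1}\to(S^m\ltimes S^{2n+1})\vee S^{m+2n+1}$ be the pinch map off the top cell appearing in the definition, and $q\colon S^m\ltimes S^{2n+1}\to S^{m+2n+1}$ be the top cell quotient. Since $S^m\ltimes S^{2n+1}$ is a suspension, the standard coaction identity holds: for any map $\xi\colon (S^m\ltimes S^{2n+1})\vee S^{m+2n+1}\to Y$ restricting to $f$ and $g$ on the two summands, $\xi\circ c=f+g\circ q$ in the group $[S^m\ltimes S^{2n+1},Y]$. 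Applied to the map defining $\Lambda(\alpha)$, this gives $\Lambda(\alpha)=\Lambda(0)+(i\circ\alpha)\circ q$. Composing with $s_0$, the first summand becomes $\lambda(0)$ by the same argument applied to $\alpha=0$, while $q\circ s_0\simeq 1_{S^{m+2n+1}}$ since $s_0$ and $q$ are respectively the inclusion and projection of the top-cell wedge summand in $S^m\ltimes S^{2n+1}\simeq S^{2n+1}\vee S^{m+2n+1}$. Hence $\lambda(\alpha)=\lambda(0)+i\circ\alpha$.

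The main obstacle I anticipate is the bookkeeping in the first step: one needs the splitting in Corollary \ref{W skeleton suspension} to be chosen compatibly with the bottom cell, so that the bottom wedge summand of $\Sigma^m W(\alpha)_{4n+2}$ really is identified with $\Sigma^m S^{2n+1}$ via $\Sigma^m i$. This uses that $W(\alpha)_{4n+2}=W(0)_{4n+2}$ is independent of $\alpha$ and admits a desuspension extending the bottom cell. Once this identification is in place, the rest of the proof is a formal manipulation of the pinch map and the suspension cogroup structure.
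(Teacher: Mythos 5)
Your proof is correct and follows the same route as the paper: identify $\lambda(\alpha)=\bar\mu(\alpha)\circ\Sigma^m i$, use naturality of $s$ with respect to the suspension $i$ to get $\lambda(\alpha)=\Lambda(\alpha)\circ s_0$ (via Lemma \ref{coaction2}), and then read off the coaction of the top cell. The only difference is expository: where the paper asserts the commuting square $\Lambda(\alpha)\circ\bar s\simeq\lambda(0)+i\circ\alpha$ without comment, you unwind the pinch map using the splitting $S^m\ltimes S^{2n+1}\simeq S^{2n+1}\vee S^{m+2n+1}$ and the identity $q\circ s_0\simeq 1$, which is exactly the tacit justification.
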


\begin{proof}
  the map $s\colon\Sigma^m (W(\alpha)_{4n+2})\to S^m\ltimes W(\alpha)_{4n+2}$ restricts to a map $\bar{s}\colon S^{m+2n+1}\to S^m\ltimes S^{2n+1}$, so there is a homotopy commutative diagram
  \[
    \xymatrix{
      S^{m+2n+1}\ar[rr]^{\bar{s}}\ar@{=}[d]&&S^m\ltimes S^{2n+1}\ar[d]^{\Lambda(\alpha)}\\
      S^{m+2n+1}\ar[rr]^{\lambda(0) + i\circ\alpha}&&W(\alpha)_{4n+2}.
    }
  \]
  On the other hand, $i$ is a suspension by Corollary \ref{W}. Thus by Lemma \ref{coaction2}, we obtain
  \[
    \lambda(\alpha)=\tilde{\mu}(\alpha)\circ s\circ\Sigma^m i=\tilde{\mu}(\alpha)\circ(1_{S^{2n+1}}\ltimes i)\circ\bar{s} = \Lambda(\alpha)\circ\bar{s}= \lambda(0) + i\circ\alpha,
  \]
  completing the proof.
\end{proof}

%%%%% Section 4 %%%%%

\section{Crude Hopf invariant}\label{Crude}

By Corollary \ref{cat(W)}, we can deduce that $\wcat(W(\alpha))$ is $1$ or $2$. In this section, we provide a sufficient condition for $\wcat(W(\alpha))=2$ in terms of the crude Hopf invariant of the attaching map of a cell of $W(\alpha)$ and prove Theorem \ref{main1} and Corollary \ref{TC=2 family}.

Let us recall the definition of the crude Hopf invariant \cite{BH}. For $k > 1$, let $T^k(Z)$ denote the $k$-fat wedge of $Z$, that is, $T^k(Z)$ is the subspace of $Z^k$ consisting of points $(x_1,\ldots,x_k)$ such that at least one of $x_i$ is the basepoint. Let $F$ denote the homotopy fiber of the inclusion $T^k(Z)\to Z^k$ and let $\widehat{F}$ denote the homotopy fiber of the projection $Z^k\to Z^{\wedge k}$. Then there is a homotopy commutative diagram
\[
  \xymatrix{
    F\ar[r]^i\ar[d]&T^k(Z)\ar[r]\ar[d]&Z^k\ar[r]\ar@{=}[d]&Z^{\wedge k}\ar@{=}[d]\\
    \Omega Z^{\wedge k}\ar[r]&\widehat{F}\ar[r]&Z^k\ar[r]&Z^{\wedge k}.
  }
\]

\begin{lemma}
  \label{retraction}
  There exists a natural left homotopy inverse $r\colon\Omega T^k(Z)\to\Omega F$ of $\Omega i$.
\end{lemma}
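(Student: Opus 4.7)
The plan is to construct $r$ by a shearing argument in the grouplike H-space $\Omega T^k(Z)$, using a canonical section of $\Omega j$, where $j\colon T^k(Z)\to Z^k$ denotes the inclusion (so $i\colon F\to T^k(Z)$ is its homotopy-fibre inclusion).

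First I would build a natural section $s\colon\Omega Z^k\to\Omega T^k(Z)$ of $\Omega j$. For each $l\in\{1,\ldots,k\}$, the map $\iota_l\colon Z\to T^k(Z)$ sending $z$ to the tuple with $z$ in the $l$-th coordinate and the basepoint in all other coordinates takes values in the fat wedge (since $k-1$ of the coordinates equal $\ast$), and the composite $j\circ\iota_l$ is the standard inclusion of the $l$-th factor $Z\hookrightarrow Z^k$. Using the loop-concatenation H-space structure on $\Omega T^k(Z)$, I set
\[
  s(\gamma_1,\ldots,\gamma_k)=\Omega\iota_1(\gamma_1)\cdot\Omega\iota_2(\gamma_2)\cdots\Omega\iota_k(\gamma_k).
\]
Because $\Omega j$ is an H-map and loop concatenation agrees up to homotopy with coordinatewise concatenation in $\Omega Z^k\simeq(\Omega Z)^k$, the composite $\Omega j\circ s$ is homotopic to the identity.

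Next I would define $r$ by the shearing formula $r(\delta)=\delta\cdot s(\Omega j(\delta))^{-1}$, where product and inverse are taken in $\Omega T^k(Z)$. The computation $\Omega j(r(\delta))\simeq\Omega j(\delta)\cdot\Omega j(\delta)^{-1}\simeq\ast$ shows that $r$ lands in the homotopy fibre of $\Omega j$, namely $\Omega F$. Since $j\circ i$ is null-homotopic, for $\gamma\in\Omega F$ we get $r(\Omega i(\gamma))\simeq\Omega i(\gamma)\cdot s(\ast)^{-1}\simeq\Omega i(\gamma)$, which corresponds to $\gamma$ under the identification $\Omega F\simeq\mathrm{fib}(\Omega j)$, so $r\circ\Omega i\simeq 1_{\Omega F}$. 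Naturality in $Z$ is inherited from the functoriality of $\iota_l$, loop multiplication, loop inversion and the homotopy-fibre construction. The main obstacle is to make the shearing formula land in $\Omega F$ on the nose rather than only up to homotopy; this is handled by replacing the fibration $F\to T^k(Z)\to Z^k$ by a strictly equivalent fibrational model in which $\Omega F$ is literally the preimage of the constant loop under $\Omega j$, and lifting $s$ accordingly.
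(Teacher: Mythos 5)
The paper does not actually give a proof of this lemma: it is stated without argument immediately after introducing $F$ and $\widehat{F}$ and is treated as standard, deferring implicitly to the cited Berstein--Hilton paper. Your shearing construction is essentially the standard one used there and in the literature, so in substance your proof is correct and matches the expected argument: build a natural section $s$ of $\Omega j$ using the axis inclusions $\iota_l$ into the fat wedge and loop concatenation, then retract $\Omega T^k(Z)$ onto $\Omega F$ via $\delta\mapsto\delta\cdot s(\Omega j(\delta))^{-1}$, and check $r\circ\Omega i\simeq 1$ using that $j\circ i$ is null.

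The one spot that is slightly glossed is the last sentence. Passing to a fibrational replacement of $j$ and taking the \emph{strict} fibre of $\Omega j$ does not by itself make the shearing formula land in $\Omega F$ on the nose: $\Omega j$ applied to $\delta\cdot s(\Omega j(\delta))^{-1}$ is $\Omega j(\delta)\cdot\Omega j(\delta)^{-1}$, i.e.\ a loop concatenated with its reverse, which is only \emph{null-homotopic}, not strictly constant, even in a Moore-loop model. What actually makes the construction well-defined is one of two equivalent repairs: (i) work with the honest homotopy fibre $\Omega F=\{(\delta,\Gamma)\}$ and supply the \emph{canonical} null-homotopy $\Gamma$ of $\Omega j(\delta)\cdot\Omega j(\delta)^{-1}$ (natural in $\delta$, so naturality of $r$ is preserved), or (ii) use that $\Omega j$ is a Hurewicz fibration so the homotopy $\Omega j\circ\tilde r\simeq\ast$ lets one deform $\tilde r$ into the strict fibre by the covering homotopy property, and then verify $r\circ\Omega i\simeq 1$ after the deformation. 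Either patch is routine and does not affect the correctness of the overall approach, but "preimage of the constant loop after a fibrational model" is not quite the right formulation.
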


Recall that $\cat(Z)\le k-1$ if and only if the diagonal map $Z\to Z^k$ lifts to a map $\gamma\colon Z\to T^k(Z)$. We call such a map $\gamma$ a $k$-th structure map of $Z$.

\begin{definition}
  \cite[Definition 2.11]{BH}
  Let $Z$ be a space with $\cat(Z)=k-1$ and a $k$-th structure map $\gamma$. The \emph{crude Hopf $\gamma$-invariant}
  \[
    \overline{H}_{\gamma}(\zeta)\in\pi_{p+1}(Z^{\wedge k})
  \]
  of $\zeta\in\pi_p(Z)$ is defined by the adjoint of the composite
  \[
    S^{p-1}\xrightarrow{\widehat{\zeta}}\Omega Z\xrightarrow{\Omega\gamma}\Omega T^k(Z)\xrightarrow{r}\Omega F\to\Omega^2Z^{\wedge k}
  \]
  where $\widehat{\zeta}$ denotes the adjoint of $\zeta$.
\end{definition}

\begin{theorem}
  \label{H and wcat}
  \cite[Corollary 3.9]{BH}
  Let $X$ be a space with $\cat(Z)=k-1$ and let $\zeta\in\pi_p(Z)$. If $\wcat(Z\cup_\zeta e^{p+1})=k$, then $\overline{H}_{\gamma}(\zeta)\ne 0$ for any $k$-th structure map $\gamma$.
\end{theorem}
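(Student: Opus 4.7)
The plan is to prove the contrapositive: assuming $\overline{H}_\gamma(\zeta)=0$, I will show that the reduced diagonal $\overline{\Delta}_{Z'}\colon Z'\to(Z')^{\wedge k}$ is null-homotopic for $Z':=Z\cup_\zeta e^{p+1}$, which gives $\wcat(Z')\le k-1$, contradicting $\wcat(Z')=k$.

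The setup is standard obstruction theory for the reduced diagonal. Because every point of $T^k(Z)$ has some coordinate at the basepoint, the composite $T^k(Z)\hookrightarrow Z^k\to Z^{\wedge k}$ is literally constant, so pre-composing with $\gamma$ yields a preferred null-homotopy of $\overline{\Delta}_Z\colon Z\to Z^{\wedge k}$. Post-composing with the natural map $Z^{\wedge k}\to(Z')^{\wedge k}$ then gives a preferred null-homotopy $H'$ of $\overline{\Delta}_{Z'}|_Z$. With respect to the cofiber sequence $S^p\xrightarrow{\zeta}Z\hookrightarrow Z'\to S^{p+1}$, the pair $(\overline{\Delta}_{Z'},H')$ determines an obstruction class $o\in\pi_{p+1}((Z')^{\wedge k})$ whose vanishing is equivalent to $\overline{\Delta}_{Z'}$ being null-homotopic.

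The crux is the identification
\[
  o=(Z\hookrightarrow Z')_*\bigl(\overline{H}_\gamma(\zeta)\bigr)\in\pi_{p+1}((Z')^{\wedge k}).
\]
I would verify this by tracking $\gamma\circ\zeta\colon S^p\to T^k(Z)$ through the commutative square of fiber sequences
\[
  \xymatrix{
    F\ar[r]\ar[d]&T^k(Z)\ar[r]\ar[d]&Z^k\ar@{=}[d]\\
    \Omega Z^{\wedge k}\ar[r]&\widehat{F}\ar[r]&Z^k
  }
\]
appearing before Lemma~\ref{retraction}. Since $\zeta$ bounds in $Z'$, the image of $\gamma\circ\zeta$ in $(Z')^k$ is null, so $\gamma\circ\zeta$ lifts (after pushing along $Z\to Z'$) to $\Omega(Z')^{\wedge k}$, and by a direct diagram chase the resulting class is precisely $o$. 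On the other hand, inserting the retraction $r$ from Lemma~\ref{retraction} produces such a lift already on $Z$, and its value on $\gamma\circ\zeta$ equals, by definition of $\overline{H}_\gamma$, the adjoint of the crude Hopf invariant. Naturality of all constructions under $Z\hookrightarrow Z'$ then matches the two.

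With this identification, the hypothesis $\overline{H}_\gamma(\zeta)=0$ forces $o=0$, so $\overline{\Delta}_{Z'}$ is null and $\wcat(Z')\le k-1$, completing the argument. The main technical obstacle is the identification of the obstruction class: comparing the two ways of producing a lift through $\Omega(Z')^{\wedge k}$—one via obstruction theory on the cofiber sequence, the other via the retraction $r$ and a naturality argument—requires careful bookkeeping of adjoints and of the canonical null-homotopies built into the two constructions.
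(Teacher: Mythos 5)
The paper does not prove this statement: it cites \cite[Corollary~3.9]{BH}. Your sketch is correct in outline and captures the standard argument. The key identification you single out --- that the obstruction $o$ to null-homotoping $\overline{\Delta}_{Z'}$ relative to the canonical null-homotopy coming from $\gamma$ equals $\bigl(\iota^{\wedge k}\bigr)_*\overline{H}_\gamma(\zeta)$, where $\iota\colon Z\hookrightarrow Z'$ --- is exactly the factorization $\overline{\Delta}_{Z'}\simeq \bigl(\iota^{\wedge k}\bigr)_*\overline{H}_\gamma(\zeta)\circ q$ (with $q\colon Z'\to Z'/Z = S^{p+1}$ the pinch map) that this paper quotes as \cite[Theorem~5.14]{BS} in the proof of Proposition~\ref{Hopf inv wcat}; once you have it, $\overline{H}_\gamma(\zeta)=0$ immediately gives $\overline{\Delta}_{Z'}$ null and hence $\wcat(Z')\le k-1$, which is precisely the contrapositive. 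The diagram chase you describe for establishing the identification (tracking $\gamma\circ\zeta$ through the two fiber sequences, using the retraction $r$ on one side and the nullity of $\iota\circ\zeta$ in $Z'$ on the other) is plausible and is the right idea, though it would need adjoints tracked carefully if written out. One small imprecision: you say the vanishing of $o$ is \emph{equivalent} to $\overline{\Delta}_{Z'}$ being null-homotopic. For a fixed choice of null-homotopy $H'$, vanishing of $o$ is only \emph{sufficient}; the obstruction has an indeterminacy under $(\Sigma\zeta)^*$, so the converse fails in general. This does not affect your argument, since you only invoke the sufficient direction. Also note that $\gamma$ lifts the diagonal only up to homotopy, so the "preferred null-homotopy" of $\overline{\Delta}_Z$ is assembled from the lifting homotopy $Z\times I\to Z^k$ post-composed with $Z^k\to Z^{\wedge k}$, not simply from the constancy of $T^k(Z)\to Z^{\wedge k}$ alone; you elide this, but it is standard.
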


We now apply this theorem to the attaching map $\lambda(\alpha)\colon S^{m+2n+1}\to W(\alpha)_{4n+2}$ of the $(m+2n+2)$-cell of $W(\alpha)$ (see Corollary \ref{cell decomposition}). Let $\rho\colon W(\alpha)_{4n+2}\to S^{4n+2}$ denote the pinch map onto the top cell. Since $\cat (W(\alpha)_{4n+2})=1$ by Corollary \ref{W skeleton suspension}, there is a second structure map $W(\alpha)_{4n+2}\to W(\alpha)_{4n+2}\vee W(\alpha)_{4n+2}$.

\begin{proposition}
  \label{Hopf inv wcat}
  If $\overline{H}_{\gamma}(\lambda(\alpha))\ne 0$ for any second structure map $\gamma$ and $\eta\circ\rho\circ\lambda(\alpha)=0$, then $\wcat(W(\alpha)_{4n+2}\cup_{\lambda(\alpha)}e^{m+2n+2})=2$.
\end{proposition}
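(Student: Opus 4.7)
The strategy is to bound $\wcat(Y)$ from above and below, where I write $Y:=W(\alpha)_{4n+2}\cup_{\lambda(\alpha)}e^{m+2n+2}$ and $Z:=W(\alpha)_{4n+2}$. The upper bound comes for free: by Corollary \ref{W skeleton suspension} the space $Z$ is a suspension, so $\cat(Z)=1$, and attaching the single cell $e^{m+2n+2}$ gives $\cat(Y)\le 2$, whence $\wcat(Y)\le\cat(Y)\le 2$. The real content is the lower bound $\wcat(Y)\ge 2$.

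For the lower bound, my plan is to invoke the converse direction of the Berstein--Hilton obstruction encoded in Theorem \ref{H and wcat}: if $\overline{H}_\gamma(\lambda(\alpha))\ne 0$ for every second structure map $\gamma$ of $Z$, then $\wcat(Y)=2$. Hypothesis (1) says precisely this. The subtle point is that the set of second structure maps of the two-cell co-H-space $Z=S^{2n+1}\cup_{-[1_{S^{2n+1}},1_{S^{2n+1}}]}e^{4n+2}$ carries a nontrivial indeterminacy, parameterized through the pinch $\rho\colon Z\to S^{4n+2}$ by elements of a relevant homotopy group of $Z\vee Z$. When one traces how a hypothetical reduction $\wcat(Y)\le 1$ would force the existence of a second structure map $\gamma_0$ with $\overline{H}_{\gamma_0}(\lambda(\alpha))=0$, the relevant indeterminacy in the value of $\overline{H}_\gamma(\lambda(\alpha))$ as $\gamma$ varies is generated by $\eta\circ\rho\circ\lambda(\alpha)$. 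Hypothesis (2), $\eta\circ\rho\circ\lambda(\alpha)=0$, eliminates this indeterminacy, so the converse Berstein--Hilton principle applies and yields $\wcat(Y)\ge 2$.

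Concretely, I would proceed by contradiction: assume $\wcat(Y)\le 1$, so that the reduced diagonal $Y\to Y^{\wedge 2}$ is null-homotopic. Restricting to $Z$ and chasing the resulting null-homotopy through the cofibration $S^{m+2n+1}\xrightarrow{\lambda(\alpha)}Z\to Y$ by obstruction theory, one produces a second structure map $\gamma$ of $Z$ for which $\gamma\circ\lambda(\alpha)$ admits a factorization through the wedge $Z\vee Z\subset Z\times Z$, equivalently $\overline{H}_\gamma(\lambda(\alpha))=0$ up to the indeterminacy controlled by hypothesis (2). Using (2) to kill that indeterminacy, one obtains some honest $\gamma_0$ with $\overline{H}_{\gamma_0}(\lambda(\alpha))=0$, directly contradicting (1).

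The main obstacle is to carry out the indeterminacy analysis precisely, i.e., to identify the set of second structure maps of $Z$ and compute how the crude Hopf invariant of $\lambda(\alpha)$ changes as $\gamma$ varies, showing the variation factors through $\eta\circ\rho\circ\lambda(\alpha)$. This uses the co-H-space structure on $Z$ together with an EHP-style analysis of the Whitehead product $[1_{S^{2n+1}},1_{S^{2n+1}}]$ underlying the cell structure of $Z\vee Z$ and $Z\wedge Z$ in the range $p=m+2n+1$. Once this identification is in place, hypotheses (1) and (2) assemble routinely to complete the argument.
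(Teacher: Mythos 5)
Your upper bound is fine and matches the paper (since $\cat(W(\alpha)_{4n+2})=1$, attaching one cell gives $\cat\le 2$, so $\wcat\le 2$). But your lower-bound argument has a structural misidentification of where hypothesis (2) does its work, and the central technical step is left as an unverified obstruction-theory chase.

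The variation of $\overline{H}_\gamma(\lambda(\alpha))$ over the $\Z/2$-torsor of second structure maps $\gamma$ is indeed measured by $\eta\circ\rho\circ\lambda(\alpha)$ --- but in the paper this is a \emph{separate} statement (Proposition~\ref{H unique}), used afterwards to pass from the single $\nabla$-invariant to the ``for all $\gamma$'' hypothesis. In the proof of Proposition~\ref{Hopf inv wcat} itself, $\eta$ enters for a different reason. The paper's argument is: by Boardman--Steer \cite[Theorem~5.14]{BS}, the reduced diagonal of $Y=W(\alpha)_{m+2n+2}$ factors as $Y\to S^{m+2n+2}\xrightarrow{\overline{H}_\gamma(\lambda(\alpha))}Y^{\wedge2}$ through the pinch onto the top cell. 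If $\overline{\Delta}$ were null, the cofibration $Y\to S^{m+2n+2}\xrightarrow{\Sigma\lambda(\alpha)}\Sigma W(\alpha)_{4n+2}$ would extend $\overline{H}_\gamma(\lambda(\alpha))$ to a map $f\colon\Sigma W(\alpha)_{4n+2}\to Y^{\wedge2}$. Since the $(4n+3)$-skeleton of $Y^{\wedge2}$ is the bottom cell $S^{4n+2}$, the restriction of $f$ is forced to factor through $\Sigma\rho$ followed by $c\Sigma\eta$, which is where $\eta$ actually appears. Precomposing with $\Sigma\lambda(\alpha)$ gives $\overline{H}_\gamma(\lambda(\alpha))=i\circ\Sigma((c\eta)\circ\rho\circ\lambda(\alpha))=0$, contradicting (1). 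Your plan instead tries to ``produce a second structure map $\gamma_0$ with $\overline{H}_{\gamma_0}(\lambda(\alpha))=0$,'' framing hypothesis (2) as controlling the $\gamma$-indeterminacy in that production. This conflates the role of hypothesis (2) in this proposition with its role in Proposition~\ref{H unique}, and it leaves the crucial step --- how a null reduced diagonal on $Y$ forces $\overline{H}_\gamma(\lambda(\alpha))=0$ --- as an unjustified obstruction chase. Without the Boardman--Steer factorization of $\overline{\Delta}$ and the cofibration/skeleton computation that concretely realizes the $\eta$, the argument does not close.

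One further caution: you refer to ``the converse direction of the Berstein--Hilton obstruction encoded in Theorem~\ref{H and wcat}.'' Theorem~\ref{H and wcat} states only the forward implication ($\wcat=k$ forces nonvanishing of the $\gamma$-invariant); the converse is not generally available, and establishing a version of it under hypothesis (2) is precisely the content of this proposition. Treating it as already available risks circularity.
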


\begin{proof}
  By \cite[Theorem 5.14]{BS}, the reduced diagonal map
  \[
  \overline{\Delta}\colon W(\alpha)_{m+2n+2}\to (W(\alpha)_{m+2n+2})^{\wedge2}
  \]
  is homotopic to the composite
  \[
    W(\alpha)_{m+2n+2}\to S^{m+2n+2}\xrightarrow{\overline{H}_{\gamma}(\lambda(\alpha))}(W(\alpha)_{m+2n+2})^{\wedge2}
  \]
  for any second structure map $\gamma$, where the first map is the pinch map onto the top cell. Suppose that $\overline{\Delta}$ is null-homotopic. Then since there is the homotopy cofibration
  \[
    W(\alpha)_{m+2n+2}\to S^{m+2n+2}\xrightarrow{\Sigma\lambda(\alpha)}\Sigma (W(\alpha)_{4n+2}),
  \]
  $\overline{H}_{\gamma}(\lambda(\alpha))$ extends to a map $f\colon\Sigma (W(\alpha)_{4n+2})\to (W(\alpha)_{m+2n+2})^{\wedge2}$. Since the $(4n+3)$-skeleton of $(W(\alpha)_{m+2n+2})^{\wedge2}$ is the bottom cell $S^{4n+2}$, the map $f$ is homotopic to the composite
  \[
    \Sigma (W(\alpha)_{4n+2})\xrightarrow{\Sigma\rho}S^{4n+3}\xrightarrow{c\Sigma\eta}S^{4n+2}\xrightarrow{i}(W(\alpha)_{m+2n+2})^{\wedge2}
  \]
  for some $c\in\Z/2$, where $i$ denotes the bottom cell inclusion. Then we get
  \[
    \overline{H}_{\gamma}(\lambda(\alpha))=f\circ\Sigma\lambda(\alpha)=i\circ\Sigma((c\eta)\circ\rho\circ\lambda(\alpha))=0,
  \]
  which contradicts the assumption. Thus $\overline{\Delta}$ is not null-homotopic, implying
  \[
  \wcat(W(\alpha)_{m+2n+2})\ge 2.
  \]
  On the other hand, since $\cat(W(\alpha)_{4n+2}) = 1$, we have $\wcat(W(\alpha)_{m+2n+2})\le 2$, completing the proof.
\end{proof}

Note that the crude Hopf invariant is determined by the choice of the $k$-th structure map.

\begin{lemma}
  \label{structure map}
  There are exactly two homotopy classes in the second structure maps
  \[
  W(\alpha)_{4n+2} \to W(\alpha)_{4n+2} \vee W(\alpha)_{4n+2}.
  \]
\end{lemma}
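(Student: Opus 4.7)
Set $W := W(\alpha)_{4n+2}$. By the proof of Corollary \ref{W skeleton suspension}, $[1_{S^{2n+1}}, 1_{S^{2n+1}}]$ is a suspension, so $W = \Sigma W'$ for some $W'$ with cells only in dimensions $2n$ and $4n+1$. In particular, $W$ is a co-H-space whose canonical comultiplication provides a second structure map, so the set $\mathcal{S}$ of homotopy classes of second structure maps is nonempty. Since $[W, W \vee W]$ is a group, $\mathcal{S}$ is a coset of
\[
K := \ker\!\bigl((q_{1*},q_{2*})\colon [W,W\vee W]\to[W,W]\times[W,W]\bigr),
\]
where $q_{1},q_{2}\colon W\vee W\to W$ are the pinch maps onto each summand. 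Hence it suffices to show $|K|=2$.

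Applying $[-,W\vee W]$ to the cofiber sequence $S^{4n+1}\xrightarrow{-[1,1]}S^{2n+1}\to W\to S^{4n+2}$, and using that $\Sigma[1,1]=0$, yields a short exact sequence
\[
0\to\pi_{4n+2}(W\vee W)\to [W,W\vee W]\to\pi_{2n+1}(W\vee W)_{0}\to 0,
\]
where the subscript $0$ denotes the kernel of precomposition with $-[1,1]$. The restriction of any second structure map to $S^{2n+1}$ must be a comultiplication on $S^{2n+1}$, and such comultiplications are unique up to homotopy since $\pi_{2n+1}(S^{2n+1}\wedge S^{2n+1})=\pi_{2n+1}(S^{4n+2})=0$. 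Therefore the elements of $\mathcal{S}$ share a common image in $\pi_{2n+1}(W\vee W)$, so $K$ lies inside $\pi_{4n+2}(W\vee W)$ and equals $\ker((q_{1*},q_{2*})\colon \pi_{4n+2}(W\vee W)\to\pi_{4n+2}(W)^{2})$.

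To compute this kernel, I would apply Hilton--Milnor to $W\vee W=\Sigma W'\vee\Sigma W'$. Basic products of weight $k\ge 3$ contribute factors $\Omega\Sigma((W')^{\wedge k})$ with $\Sigma((W')^{\wedge k})$ being $2nk$-connected, hence contributing nothing in degree $4n+2$ for $n\ge 1$. This leaves the decomposition
\[
\pi_{4n+2}(W\vee W)\cong\pi_{4n+2}(W)\oplus\pi_{4n+2}(W)\oplus\pi_{4n+2}\!\bigl(\Sigma(W'\wedge W')\bigr).
\]
Under $(q_{1*},q_{2*})$, the first two summands (the wedge-inclusion factors) map isomorphically onto $\pi_{4n+2}(W)^{2}$, while the third (the Whitehead-bracket factor $[j_{1},j_{2}]$, where $j_{1},j_{2}\colon W\to W\vee W$ are the wedge inclusions) maps trivially since Whitehead products of elements from distinct factors of $W\times W$ vanish. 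Therefore $K\cong\pi_{4n+2}(\Sigma(W'\wedge W'))$. Since $W'\wedge W'$ has $(4n+1)$-skeleton equal to $S^{4n}$, we have $\pi_{4n+2}(\Sigma(W'\wedge W'))\cong\pi_{4n+2}(S^{4n+1})\cong\Z/2$, generated by $\eta$. Hence $|\mathcal{S}|=|K|=2$.

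The main obstacle is the Hilton--Milnor bookkeeping for the non-spherical suspension $W=\Sigma W'$: one must justify that no weight-$\ge 3$ contribution appears in the range $\pi_{4n+2}$ and that the weight-$2$ summand coincides exactly with the Whitehead-product kernel under $(q_{1*},q_{2*})$. Once this is in hand, the computation of the generator as a composition with $\eta$ makes the two homotopy classes concretely identifiable.
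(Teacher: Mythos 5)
Your argument is correct and reaches the same final computation (a $\Z/2$ living in dimension $4n+2$, detected off $S^{4n+1}$) but by a route genuinely different from the paper's. The paper encodes the count of structure maps as the order of $[W(\alpha)_{4n+2}, F]$, where $F$ is the homotopy fiber of the inclusion $W(\alpha)_{4n+2}\vee W(\alpha)_{4n+2}\to W(\alpha)_{4n+2}\times W(\alpha)_{4n+2}$; it invokes the Berstein--Hilton retraction (Lemma \ref{retraction}) to get injectivity of $[W(\alpha)_{4n+2},F]\to[W(\alpha)_{4n+2},W(\alpha)_{4n+2}\vee W(\alpha)_{4n+2}]$, identifies $F\simeq\Omega W(\alpha)_{4n+2}\star\Omega W(\alpha)_{4n+2}$, and computes $[W(\alpha)_{4n+2},F]\cong[W(\alpha)_{4n+2},S^{4n+1}]\cong\Z/2$ from the low-dimensional cell structure of $F$. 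You instead observe directly that the structure-map set $\mathcal{S}$ is the $j_*$-preimage of the diagonal class, hence a coset of $K=\ker(q_{1*},q_{2*})$, so no retraction lemma is needed; then you localize $K$ to $\pi_{4n+2}(W\vee W)$ via the cofiber sequence of $W$ (using $\Sigma[1,1]=0$) and a uniqueness-of-lift argument in dimension $2n+1$, and compute the kernel inside $\pi_{4n+2}(W\vee W)$ by Hilton--Milnor, landing on $\pi_{4n+2}(\Sigma(W'\wedge W'))\cong\pi_{4n+2}(S^{4n+1})\cong\Z/2$. The two approaches are essentially computing the same group through different lenses (cell structure of the James fiber vs.\ Hilton--Milnor for $\Sigma W'\vee\Sigma W'$). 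Your route avoids Lemma \ref{retraction}, which is a nice economy, at the price of the Hilton--Milnor bookkeeping for a non-spherical, non-double-suspension $W$ -- which you correctly flag as the point requiring care; it is fine here because weight-$\ge 3$ basic products contribute only above dimension $6n\ge 4n+2$.

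Two small imprecisions, neither fatal: (i) The comultiplications on $S^{2n+1}$ are classified not by $\pi_{2n+1}(S^{2n+1}\wedge S^{2n+1})=\pi_{2n+1}(S^{4n+2})$ but by $\pi_{2n+1}$ of the fiber $\Omega S^{2n+1}\star\Omega S^{2n+1}$, whose bottom cell is $S^{4n+1}$; the relevant group is $\pi_{2n+1}(S^{4n+1})$. Both vanish for $n\ge 1$, so the conclusion stands. (ii) You implicitly use cellular approximation to replace $\gamma|_{S^{2n+1}}:S^{2n+1}\to W\vee W$ by a map into the $(2n+1)$-skeleton $S^{2n+1}\vee S^{2n+1}$; it would be cleaner to simply note that lifts of $\Delta|_{S^{2n+1}}$ to $W\vee W$ differ by $\pi_{2n+1}(F)$, which vanishes because $F\simeq\Omega W\star\Omega W$ is $(4n-1)$-connected.
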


\begin{proof}
  Let $F$ denote the homotopy fiber of the inclusion $W(\alpha)_{4n+2}\vee W(\alpha)_{4n+2}\to W(\alpha)_{4n+2}\times W(\alpha)_{4n+2}$. Then since $W(\alpha)_{4n+2}$ is a suspension by Corollary \ref{W skeleton suspension}, there is an exact sequence of groups
  \begin{multline*}
    1\to[W(\alpha)_{4n+2},F]\to[W(\alpha)_{4n+2},W(\alpha)_{4n+2}\vee W(\alpha)_{4n+2}]\\
    \to[W(\alpha)_{4n+2},W(\alpha)_{4n+2}\times W(\alpha)_{4n+2}]
  \end{multline*}
  by Lemma \ref{retraction}. Then the order of the set of the homotopy classes of second structure maps of $W(\alpha)_{4n+2}$ is equal to the order of $[W(\alpha)_{4n+2},F]$. On the other hand, $F$ has the homotopy type of $\Omega(W(\alpha)_{4n+2})\star\Omega(W(\alpha)_{4n+2})$. Then by Proposition \ref{W skeleton} and Corollary \ref{W skeleton suspension}, the $(6n+1)$-skeleton of $F$ is $S^{4n+1}\vee S^{6n+1}\vee S^{6n+1}$, implying
  \[
    [W(\alpha)_{4n+2},F]\cong[W(\alpha)_{4n+2},S^{4n+1}].
  \]
  Moreover, the homotopy cofibration $S^{2n+1}\to W(\alpha)_{4n+2}\xrightarrow{\rho} S^{4n+2}$ induces the exact sequence
  \[
    \pi_{2n+2}(S^{4n+1})\to\pi_{4n+2}(S^{4n+1})\to[W(\alpha)_{4n+2},S^{4n+1}]\to\pi_{2n+1}(S^{4n+1}).
  \]
  Thus since $\pi_{2n+2}(S^{4n+1})=\pi_{2n+1}(S^{4n+1})=0$ and $\pi_{4n+2}(S^{4n+1})\cong\Z/2$, the statement is proved.
\end{proof}

Let $\nabla\colon W(\alpha)_{4n+2}\to W(\alpha)_{4n+2}\vee W(\alpha)_{4n+2}$ denote the suspension comultiplication, and let $\bar{\eta}$ be the composite
\[
  W(\alpha)_{4n+2}\to F\to W(\alpha)_{4n+2}\vee W(\alpha)_{4n+2}
\]
where the first map is the generator of $[W(\alpha)_{4n+2},F]$ and the second map is the fiber inclusion. Then by Lemma \ref{structure map} and its proof, the set of all homotopy classes of second structure map of $W(\alpha)_{4n+2}$ is
\[
  \{\nabla,\,\nabla+\bar{\eta}\}.
\]
From the proof of Lemma \ref{structure map}, we can see that $\bar{\eta}$ is the composite
\begin{equation}
  \label{eta}
  W(\alpha)_{4n+2}\xrightarrow{\rho}S^{4n+2}\xrightarrow{\eta}S^{4n+1}\xrightarrow{w}S^{2n+1}\vee S^{2n+1}\to W(\alpha)_{4n+2}\vee W(\alpha)_{4n+2}.
\end{equation}
For $k=1,2$, let $i_k\colon W(\alpha)_{4n+2}\to W(\alpha)_{4n+2}\vee W(\alpha)_{4n+2}$ denote the $k$-th inclusion.

\begin{lemma}
  \label{left-dist}
  Let $X$ be a finite CW complex of dimension $\le 6n-1$. Then, for $\zeta\in [\Sigma X, W(\alpha)_{4n+2}]$, we have
  \[
  i_1 \circ \zeta + i_2 \circ \zeta
  = (i_1 + i_2) \circ \zeta + [i_1, i_2] \circ H(\zeta)
  \]
  where $H(\zeta)$ is the second James-Hopf invariant of $\zeta$.
\end{lemma}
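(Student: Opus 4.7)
The identity is an instance of Hilton's classical formula for the failure of left-distributivity of composition over addition; the hypothesis on $\dim X$ ensures that only the leading (weight-two) correction survives.

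I will begin by reducing the statement to a measurement of $\zeta$'s failure to be a co-H-map. By Corollary~\ref{W skeleton suspension}, $W := W(\alpha)_{4n+2}$ is a suspension, hence a co-H-space with comultiplication $\nabla_W$, and $i_1 + i_2 = \nabla_W$ in $[W, W \vee W]$; unpacking addition in $[\Sigma X, W \vee W]$ via the co-H-structure of $\Sigma X$ gives $i_1 \circ \zeta + i_2 \circ \zeta = (\zeta \vee \zeta) \circ \nabla_{\Sigma X}$. Thus the claim becomes
\[
  (\zeta \vee \zeta) \circ \nabla_{\Sigma X} - \nabla_W \circ \zeta = [i_1, i_2] \circ H(\zeta),
\]
i.e., the second James-Hopf invariant measures the defect of $\zeta$ from being a co-H-map.

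Next, I would use the Ganea fiber sequence $\Sigma(\Omega W \wedge \Omega W) \xrightarrow{[i_1, i_2]} W \vee W \to W \times W$. Both sides of the displayed equation project to $(\zeta, \zeta) \in [\Sigma X, W \times W]$ under the inclusion $W \vee W \hookrightarrow W \times W$, so their difference lifts, via $[i_1, i_2]$, to $[\Sigma X, \Sigma(\Omega W \wedge \Omega W)]$. Writing $W = \Sigma Y$ with $Y \simeq S^{2n} \cup e^{4n+1}$ and invoking the James equivalence $J(Y) \simeq \Omega W$, the classical Ganea--James--Hilton identification expresses this lift, via the natural map $\Sigma(Y \wedge Y) \to \Sigma(\Omega W \wedge \Omega W)$, as the second James-Hopf invariant $H(\zeta) \in [\Sigma X, \Sigma(Y \wedge Y)]$, recovering the claimed weight-two correction $[i_1, i_2] \circ H(\zeta)$.

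Finally, I would dispose of higher-order corrections. The general Hilton formula includes additional summands $[\omega(i_1, i_2)] \circ H_\omega(\zeta)$ indexed by basic Whitehead products $\omega$ in the letters $i_1, i_2$ of weight $k \geq 3$. Each corresponding $H_\omega(\zeta)$ takes values in $\Sigma(Y^{\wedge k})$, whose bottom cell has dimension $2nk + 1 \geq 6n + 1$. Since $\dim \Sigma X \leq 6n$, every such invariant is null-homotopic, and only the weight-two term survives.

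The main obstacle is the middle step: matching the abstract lift into the fiber of $W \vee W \to W \times W$ with $H(\zeta)$ as used in the paper (following \cite{BH}) requires careful adjunction bookkeeping. If a direct citation is unavailable, the identification can instead be extracted from the Hilton--Milnor splitting of $\Omega\Sigma(Y \vee Y)$, which provides a natural direct-sum decomposition of $[\Sigma X, W \vee W]$ in the relevant dimensional range and makes the formula visible componentwise.
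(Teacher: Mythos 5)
Your proposal is correct, and the endpoint is the same dimension argument the paper uses. Both you and the paper reduce the claim to Hilton's left-distributivity expansion of $i_1\circ\zeta + i_2\circ\zeta$ into basic Whitehead products weighted by higher James--Hopf invariants, and both discard the terms of weight $\ge 3$ because $\Sigma Y^{\wedge k}$ is $(2nk)$-connected while $\dim \Sigma X\le 6n$. Where you diverge is in how the weight-two identity is produced: the paper simply cites Baues \cite[II, Theorem 2.8]{B} for the expansion (adding a remark that the citation still applies even though $W(\alpha)_{4n+2}$ need not be a double suspension), whereas you reconstruct it from the Ganea fibration $\Sigma(\Omega W\wedge\Omega W)\to W\vee W\to W\times W$ together with the James and Hilton--Milnor decompositions of $\Omega W$ and $\Omega(W\vee W)$, viewing the left-hand side as the co-H deviation of $\zeta$. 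The ``middle step'' you flag as the main obstacle --- identifying the Ganea lift with the James--Hopf invariant $H(\zeta)$ --- is exactly the content of the Baues citation (and of Barcus--Barratt), so it is a missing reference rather than a gap. Your route is more self-contained and makes the mechanism visible, at the cost of re-deriving a formula the paper gets in one line; either is a legitimate proof.
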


\begin{proof}
  As in \cite[II, Theorem 2.8]{B}, we have an expansion of $i_1 \circ \zeta + i_2 \circ \zeta$, and by degree reasons, terms higher than two vanish, although $W(\alpha)_{4n+2}$ is not a double suspension, in general. We then obtain equality in the statement.
\end{proof}

\begin{proposition}
  \label{H unique}
  If $\eta\circ \rho\circ \lambda(\alpha) = 0$, then $\overline{H}_{\nabla}(\lambda(\alpha)) = \overline{H}_{\nabla+\bar{\eta}}(\lambda(\alpha))$.
\end{proposition}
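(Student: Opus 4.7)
The plan is to identify $\overline{H}_{\nabla+\bar\eta}(\lambda(\alpha)) - \overline{H}_{\nabla}(\lambda(\alpha))$ with a quantity derived from $\bar\eta\circ\lambda(\alpha)$ and then show it vanishes. First, by \eqref{eta}, I factor $\bar\eta$ through the fiber inclusion $i\colon F\to W(\alpha)_{4n+2}\vee W(\alpha)_{4n+2}$: the universal Whitehead product $w\colon S^{4n+1}\to S^{2n+1}\vee S^{2n+1}$ lifts through the homotopy fiber $\widetilde F$ of $S^{2n+1}\vee S^{2n+1}\to S^{2n+1}\times S^{2n+1}$, and the induced map $\widetilde F\to F$ (from functoriality of homotopy fibers applied to the bottom-cell inclusion) yields a lift $\widetilde\eta\colon W(\alpha)_{4n+2}\to F$ such that $\bar\eta=i\circ\widetilde\eta$ and $\widetilde\eta$ itself factors through $\eta\circ\rho$. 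Under the hypothesis $\eta\circ\rho\circ\lambda(\alpha)=0$, this gives $\widetilde\eta\circ\lambda(\alpha)=0$ in $\pi_{m+2n+1}(F)$.

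Next, unpack the definition of the crude Hopf invariant: $\overline{H}_\gamma(\lambda(\alpha))$ is the adjoint of $(\Omega\theta)\circ r\circ\Omega\gamma\circ\widehat{\lambda(\alpha)}$, where $\theta\colon F\to\Omega(W(\alpha)_{4n+2})^{\wedge 2}$ and $r$ is the retraction from Lemma \ref{retraction}. The target $\pi_{m+2n+2}((W(\alpha)_{4n+2})^{\wedge 2})$ is abelian, so the induced assignment on composites is additive. Applying the Baues-type expansion used in the proof of Lemma \ref{left-dist} (the dimension hypothesis ensures only the quadratic Hopf term survives) gives
\[
(\nabla+\bar\eta)\circ\lambda(\alpha) = \nabla\circ\lambda(\alpha) + \bar\eta\circ\lambda(\alpha) - [\nabla,\bar\eta]\circ H(\lambda(\alpha))
\]
in $\pi_{m+2n+1}(W(\alpha)_{4n+2}\vee W(\alpha)_{4n+2})$. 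By the retraction identity $r\circ\Omega i=1$, the contribution of $\bar\eta\circ\lambda(\alpha) = (i\circ\widetilde\eta)\circ\lambda(\alpha)$ to the Hopf invariant difference is the adjoint of $\Omega(\theta\circ\widetilde\eta\circ\lambda(\alpha))$, which is null by the previous paragraph.

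The remaining (and main) difficulty is the Whitehead correction $[\nabla,\bar\eta]\circ H(\lambda(\alpha))$. I would exploit the factorization $\bar\eta=i\circ\widetilde\eta$ together with the bilinearity and naturality of the generalized Whitehead product to show that $[\nabla,\bar\eta]$ factors in a way that, after composition with $H(\lambda(\alpha))$ and with $(\Omega\theta)\circ r$, the assembled map is routed through $\eta\circ\rho\circ\lambda(\alpha)$ and therefore vanishes by hypothesis. A clean dimension and connectivity argument using the cell structures from Section \ref{complex W}, together with the factorization of $\widetilde\eta$ through the pinch map $\rho$ onto the top cell, should suffice; this is the core step of the proof.
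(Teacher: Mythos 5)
The overall strategy is recognizable as the paper's: reduce to showing $\nabla\circ\lambda(\alpha)=(\nabla+\bar\eta)\circ\lambda(\alpha)$ (since the crude Hopf invariant $\overline{H}_\gamma(\zeta)$ only depends on the composite $\gamma\circ\zeta$), apply the Baues-type left-distributivity expansion of Lemma~\ref{left-dist}, and kill the two extra terms $\bar\eta\circ\lambda(\alpha)$ and $[\nabla,\bar\eta]\circ H(\lambda(\alpha))$. You get the first extra term right, though in a needlessly roundabout way: the paper simply observes that by \eqref{eta} the composite $\bar\eta\circ\lambda(\alpha)$ literally factors through $\eta\circ\rho\circ\lambda(\alpha)=0$, and there is no need to lift to the homotopy fiber $F$ or invoke the retraction $r$.

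The genuine gap is in the treatment of the Whitehead correction term $[\nabla,\bar\eta]\circ H(\lambda(\alpha))$. You acknowledge this as ``the core step'' but do not carry it out, and the strategy you sketch --- factor it so the ``assembled map is routed through $\eta\circ\rho\circ\lambda(\alpha)$'' and then invoke the hypothesis --- is not the right mechanism. That term is precomposed with the second James--Hopf invariant $H(\lambda(\alpha))$, not with $\lambda(\alpha)$ itself, and there is no natural way to make the hypothesis $\eta\circ\rho\circ\lambda(\alpha)=0$ bear on it. The paper kills this term for purely dimensional reasons, independent of the hypothesis: because $\bar\eta$ factors through $\rho$, the Whitehead product $[\nabla,\bar\eta]$ factors through
\[
1\star\Sigma^{-1}\rho\colon (\Sigma^{-1}W(\alpha)_{4n+2})\star(\Sigma^{-1}W(\alpha)_{4n+2})\longrightarrow(\Sigma^{-1}W(\alpha)_{4n+2})\star S^{4n+1},
\]
and the target is $(6n+1)$-connected, whereas $H(\lambda(\alpha))$ is defined on $S^{m+2n+1}$ with $m+2n+1\le 6n$, so the composite is null. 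Until you see this point, your proof sketch does not close; and insisting that the hypothesis is what kills the Whitehead term would mislead a reader about what is actually going on (the hypothesis is needed only for the $\bar\eta\circ\lambda(\alpha)$ term).
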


\begin{proof}
  By the definition of crude Hopf invariants, it is sufficient to show that $\nabla\circ \lambda(\alpha) = (\nabla+\bar{\eta}) \circ \lambda(\alpha)$. By Lemma \ref{left-dist}, we have
  \[
    \nabla\circ \lambda(\alpha) + \bar{\eta}\circ \lambda(\alpha)
  = (\nabla +\bar{\eta}) \circ \lambda(\alpha) + [\nabla, \bar{\eta}] \circ H(\lambda(\alpha)).
  \]
  By \eqref{eta}, $\bar{\eta}\circ \lambda(\alpha)$ is the composite
  \begin{multline*}
    S^{m+2n+1}\xrightarrow{\lambda(\alpha)}W(\alpha)_{4n+2}\xrightarrow{\rho}S^{4n+2}\xrightarrow{\eta}\\
    S^{4n+1}\xrightarrow{w}S^{2n+1}\vee S^{2n+1}\to W(\alpha)_{4n+2}\vee W(\alpha)_{4n+2},
  \end{multline*}
  which is trivial by assumption. On the other hand, by \eqref{eta}, the Whitehead product $[\nabla, \bar{\eta}]$ is the composite
  \begin{multline*}
    (\Sigma^{-1}W(\alpha)_{4n+2})\star(\Sigma^{-1}W(\alpha)_{4n+2})\xrightarrow{1\star\Sigma^{-1}\rho}\\
    (\Sigma^{-1}W(\alpha)_{4n+2})\star S^{4n+1}\xrightarrow{\omega}W(\alpha)_{4n+2}\vee W(\alpha)_{4n+2}
  \end{multline*}
  for some $\omega$. By degree reasons, the composite
  \[
    S^{m+2n+1}\xrightarrow{H(\lambda(\alpha))}(\Sigma^{-1}W(\alpha)_{4n+2})\star(\Sigma^{-1}W(\alpha)_{4n+2})\xrightarrow{1\star\Sigma^{-1}\rho}(\Sigma^{-1}W(\alpha)_{4n+2})\star S^{4n+1}
  \]
  is trivial, implying $[\nabla, \bar{\eta}] \circ H(\lambda(\alpha))=0$. Thus the proof is complete.
\end{proof}

\begin{corollary}
  \label{wcat=2}
  If $\eta\circ \rho\circ \lambda(\alpha) = 0$ and $\overline{H}_{\nabla}(\lambda(\alpha))\neq 0$, then $\wcat(W(\alpha)) = 2$.
\end{corollary}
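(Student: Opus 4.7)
The plan is to combine the prior machinery into $\wcat(W(\alpha)) = 2$ in two steps: first I would apply Proposition \ref{Hopf inv wcat} to deduce $\wcat(W(\alpha)_{m+2n+2}) = 2$, and then I would bootstrap from the skeleton to the full complex via a connectivity argument. The upper bound $\wcat(W(\alpha)) \leq \cat(W(\alpha)) \leq 2$ is immediate from Corollary \ref{cat(W)}, so the real work is the lower bound $\wcat(W(\alpha)) \geq 2$.

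For the first step I need to verify the two hypotheses of Proposition \ref{Hopf inv wcat}. The condition $\eta \circ \rho \circ \lambda(\alpha) = 0$ is one of our assumptions, and the crude Hopf invariant condition requires that $\overline{H}_{\gamma}(\lambda(\alpha)) \neq 0$ for \emph{every} second structure map $\gamma$ of $W(\alpha)_{4n+2}$. By Lemma \ref{structure map} there are exactly two candidates for $\gamma$, namely $\nabla$ and $\nabla + \bar{\eta}$. The hypothesis $\overline{H}_{\nabla}(\lambda(\alpha)) \neq 0$ handles the first, and Proposition \ref{H unique}---which itself uses $\eta \circ \rho \circ \lambda(\alpha) = 0$---identifies the two crude Hopf invariants and thus supplies the second. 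Proposition \ref{Hopf inv wcat} then yields $\wcat(W(\alpha)_{m+2n+2}) = 2$.

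For the second step I would exploit the cell structure of Corollary \ref{cell decomposition}. The only cell of $W(\alpha)$ outside $W(\alpha)_{m+2n+2}$ is the top cell of dimension $m+4n+3$, so the relative cells of $W(\alpha)^{\wedge 2}$ over $(W(\alpha)_{m+2n+2})^{\wedge 2}$ all lie in dimensions at least $(m+4n+3)+(2n+1) = m+6n+4$. Hence the inclusion $j \wedge j\colon (W(\alpha)_{m+2n+2})^{\wedge 2} \hookrightarrow W(\alpha)^{\wedge 2}$ induced by $j\colon W(\alpha)_{m+2n+2} \hookrightarrow W(\alpha)$ is $(m+6n+3)$-connected, strictly exceeding $\dim W(\alpha)_{m+2n+2} = m+2n+2$ for every $n \geq 1$. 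Standard obstruction theory then makes
\[
(j \wedge j)_* \colon [W(\alpha)_{m+2n+2},\, (W(\alpha)_{m+2n+2})^{\wedge 2}] \longrightarrow [W(\alpha)_{m+2n+2},\, W(\alpha)^{\wedge 2}]
\]
a bijection. If $\overline{\Delta}\colon W(\alpha) \to W(\alpha)^{\wedge 2}$ were null-homotopic, naturality of the reduced diagonal along $j$ would give $(j \wedge j)\circ \overline{\Delta}|_{W(\alpha)_{m+2n+2}} \simeq \overline{\Delta}\circ j \simeq \ast$, and the bijection would then force $\overline{\Delta}|_{W(\alpha)_{m+2n+2}} \simeq \ast$, contradicting step one.

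The main obstacle is precisely this skeletal bootstrap: since weak category is not monotone under attaching cells in general, I cannot simply quote $\wcat(W(\alpha)_{m+2n+2}) = 2$ to get $\wcat(W(\alpha)) \geq 2$. Once the connectivity estimate $m+6n+3 > m+2n+2$ is in hand, however, the obstruction-theoretic transfer is routine and I expect no further difficulty.
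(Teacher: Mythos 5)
Your proposal is correct and takes essentially the same approach as the paper: you verify the hypotheses of Proposition \ref{Hopf inv wcat} via Lemma \ref{structure map} and Proposition \ref{H unique} to get $\wcat(W(\alpha)_{m+2n+2})=2$, then pass to the full complex by the same connectivity argument (your obstruction-theoretic bijection on mapping sets is just a reformulation of the paper's lift into the highly-connected homotopy fiber of $i^{\wedge 2}$, with the same numeric bound $m+6n+2$).
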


\begin{proof}
  By Propositions \ref{Hopf inv wcat} and \ref{H unique}, we get $\wcat(W(\alpha)_{m+2n+2}) = 2$. Recall that there is a commutative diagram
  \[
  \xymatrix{
  W(\alpha)_{m+2n+2}\ar[r]^{\overline{\Delta}_1} \ar[d]^{i} & (W(\alpha)_{m+2n+2})^{\wedge2}\ar[d]^{i^{\wedge2}} \\
  W(\alpha) \ar[r]^{\overline{\Delta}_2}& W(\alpha)^{\wedge2}
  }
  \]
  where $i\colon W(\alpha)_{m+2n+2}\to W(\alpha)$ denotes the inclusion and $\overline{\Delta}_1, \overline{\Delta}_2$ denote the reduced diagonals. Suppose that $\wcat(W(\alpha)) = 1$. Then $\overline{\Delta}_2$ is trivial, and so $\overline{\Delta}_1$ can be lifted to $W(\alpha)_{m+2n+2}\to \widetilde{F}$, where $\widetilde{F}$ denotes the fiber of $i^{\wedge2}$. By the Blakers-Massey theorem, $\widetilde{F}$ is $(m+6n+2)$-connected. Then the lift $W(\alpha)_{m+2n+2}\to \widetilde{F}$ is trivial, which is a contradiction. Thus $\wcat(W(\alpha)) \ge 2$, and by Corollary \ref{cat(W)}, the statement is proved.
\end{proof}

We next aim to determine $\rho\circ \lambda(\alpha)$ for each homotopy fibration \eqref{fibration odd}. Let $\mathsf{Cyl}(f)$ denote the mapping cylinder of a map $f\colon X \to Y$ and let $\iota\colon A\star B\to\Sigma(A\times B)$ be the natural map
\[
  A\star B=(\mathsf{Cyl}(p_1))\cup_{A\times B}(\mathsf{Cyl}(p_2))\to(C_+(A\times B))\cup_{A\times B}(C_-(A\times B))=\Sigma(A\times B).
\]
Then, for a map $\mu\colon A\times B\to C$, its Hopf construction $\mathcal{H}(\mu)\colon A\star B\to \Sigma C$ is the composite
\[
  A\star B\xrightarrow{\iota}\Sigma(A\times B)\xrightarrow{\Sigma\mu}\Sigma C.
\]
In particular, by setting $\mu$ as the projection $q\colon A\times B\to A\wedge B$, we obtain:

\begin{lemma}
  \label{iota retraction}
  The composite
  \[
    A\star B\xrightarrow{\iota}\Sigma(A\times B)\xrightarrow{\Sigma q}\Sigma(A\wedge B)
  \]
  is a homotopy equivalence.
\end{lemma}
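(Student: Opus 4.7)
The plan is to recognize the composite $\Sigma q\circ\iota$ as the classical homotopy equivalence $A\star B\simeq\Sigma(A\wedge B)$. First, I would exploit the cofibration $A\vee B\hookrightarrow A\times B\xrightarrow{q}A\wedge B$: the projections $A\times B\to A$ and $A\times B\to B$ retract $A\vee B\hookrightarrow A\times B$ after suspending, giving the classical splitting
\[
\Sigma(A\times B)\simeq\Sigma A\vee\Sigma B\vee\Sigma(A\wedge B),
\]
under which $\Sigma q$ corresponds to the projection onto the third wedge summand.

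Next, I would analyze $\iota$ on the two natural subspaces $A,B\subset A\star B$. The inclusion of $A$ as the cone apex of $\mathsf{Cyl}(p_1)$ inside $A\star B$ maps under $\iota$ to a single cone point of $\Sigma(A\times B)$, since $\iota$ restricted to $\mathsf{Cyl}(p_1)$ factors through $C_+(A\times B)$, in which $A\times B\times\{0\}$ is collapsed. The symmetric argument applies to $B$ and the opposite cone point. Hence the composite $A\vee B\hookrightarrow A\star B\xrightarrow{\iota}\Sigma(A\times B)$ is null-homotopic, and consequently $\Sigma q\circ\iota$ factors through the quotient $A\star B/(A\vee B)$.

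A direct inspection of the defining relations of the join then shows $A\star B/(A\vee B)\cong\Sigma(A\wedge B)$, and the resulting composite $A\star B\to\Sigma(A\wedge B)$ is the standard equivalence. The main obstacle is making the identification of the quotient $A\star B/(A\vee B)$ with $\Sigma(A\wedge B)$ rigorous, especially with respect to basepoint and well-pointedness issues; this can be handled under the standing CW assumption. As an alternative route that avoids this bookkeeping, I would combine the splitting above with the Mayer-Vietoris sequence for the pushout decomposition $A\star B=(\mathsf{Cyl}(p_1))\cup_{A\times B}(\mathsf{Cyl}(p_2))$ to verify directly that $\Sigma q\circ\iota$ induces an isomorphism on reduced homology, after which Whitehead's theorem concludes the argument, since $A\star B$ and $\Sigma(A\wedge B)$ are simply-connected CW complexes.
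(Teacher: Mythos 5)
The paper states this lemma without proof, treating it as a classical fact, so I am reviewing your argument on its own merits. Your first route contains a genuine error at the key step: the claim $A\star B/(A\vee B)\cong\Sigma(A\wedge B)$ is false when $A\vee B$ denotes the two end copies of $A$ and $B$ inside $A\star B$, joined along the basepoint arc. Each of $A$, $B$ is null-homotopic in $A\star B$, so the long exact sequence of the pair gives $\tilde H_n(A\star B/(A\vee B))\cong\tilde H_n(A\star B)\oplus\tilde H_{n-1}(A)\oplus\tilde H_{n-1}(B)\cong\tilde H_n(\Sigma(A\times B))$; indeed, collapsing $A\vee B\subset A\star B$ is, up to a contractible arc, the same as collapsing $A\sqcup B$, so the quotient is $\Sigma(A\times B)$ itself and the induced map out of it is $\Sigma q$ again. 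Your factorization is therefore circular and proves nothing new. A concrete check: for $A=B=S^1$ one has $A\star B=S^3$, yet $S^3/(S^1\vee S^1)\simeq\Sigma(S^1\times S^1)\simeq S^2\vee S^2\vee S^3$, which is not $\Sigma(S^1\wedge S^1)=S^3$.

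The fix is to collapse the correct subspace: the preimage of the basepoint of $\Sigma(A\wedge B)$ under $\Sigma q\circ\iota$ is not $A\vee B$ but the larger subcomplex $A\star\{b_0\}\cup\{a_0\}\star B$, which is two cones $CA$ and $CB$ glued along the interval $\{a_0\}\star\{b_0\}$; this \emph{is} contractible. Tracing the join relations shows directly that $A\star B$ modulo this subcomplex is $A\wedge B\wedge(I/\partial I)=\Sigma(A\wedge B)$, and since one is collapsing a contractible subcomplex of a CW pair, $\Sigma q\circ\iota$ is a homotopy equivalence. With this substitution your first route goes through. Your Mayer--Vietoris plus Whitehead alternative is sound as a fallback, but note that it requires $A$ and $B$ connected in order to invoke simple connectivity of $A\star B$; this holds in all of the paper's applications but is a restriction worth flagging, whereas the cell-collapse argument avoids it.
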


Suppose that there is a cotriad
\[
  \xymatrix{
    B& S^m \times B \ar[l]_{p_2} \ar[r]^-{f} & C.
  }
\]
We denote an extension of $f$ by $\tilde{f}\colon S^m\ltimes B\to C\cup e^{m+1}$. Let $Z$ be the homotopy pushout of the above cotriad. Let $q_1\colon A\times B\to A\ltimes B$ denote the projection.

\begin{lemma}
  \label{SigmaC}
  The space $\Sigma Z$ has the homotopy type of the cofiber of the Hopf construction
  \[
    \mathcal{H}(\tilde{f}\circ q_1)\colon S^m \star B \to \Sigma(C \cup e^{m+1}).
  \]
\end{lemma}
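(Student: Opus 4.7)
The plan is to suspend the defining pushout of $Z$, split $\Sigma(S^m\times B)$ using the James decomposition, and identify the resulting cofiber with that of the Hopf construction via Lemma \ref{iota retraction}.

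Applying $\Sigma$ to the defining pushout expresses $\Sigma Z$ as the homotopy pushout of $\Sigma B\xleftarrow{\Sigma p_2}\Sigma(S^m\times B)\xrightarrow{\Sigma f}\Sigma C$. Under the James splitting $\Sigma(S^m\times B)\simeq S^{m+1}\vee\Sigma B\vee\Sigma(S^m\wedge B)$, whose three coordinate maps are $\Sigma p_1,\Sigma p_2,\Sigma q$, the map $\Sigma p_2$ becomes the wedge projection onto the middle summand. A standard reduction then identifies $\Sigma Z$ with the mapping cone of $\Sigma f$ restricted to the complementary summand $S^{m+1}\vee\Sigma(S^m\wedge B)$. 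On the $S^{m+1}$ summand this restriction is $\Sigma(f|_{S^m\times *})$, whose cone in $\Sigma C$ yields $\Sigma(C\cup e^{m+1})$; hence $\Sigma Z$ is the mapping cone of the induced map
\[
\Sigma\iota_C\circ\Sigma f|_{\Sigma(S^m\wedge B)}\colon\Sigma(S^m\wedge B)\to\Sigma(C\cup e^{m+1}),
\]
where $\iota_C\colon C\hookrightarrow C\cup e^{m+1}$ is the inclusion.

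To identify this attaching map with the Hopf construction, I would use Lemma \ref{iota retraction}: $\lambda:=\Sigma q\circ\iota\colon S^m\star B\to\Sigma(S^m\wedge B)$ is a homotopy equivalence. Moreover, the Hopf constructions $\Sigma p_k\circ\iota=\mathcal{H}(p_k)$ of the coordinate projections $p_1,p_2$ are null-homotopic, since they factor through $S^m\star *\simeq *$ and $*\star B\simeq *$, respectively. Under the James splitting this forces $\iota$ to factor up to homotopy through the $\Sigma(S^m\wedge B)$ summand via $\lambda$. Since $\tilde{f}\circ q_1=\iota_C\circ f$, I obtain
\[
\mathcal{H}(\tilde{f}\circ q_1)=\Sigma\iota_C\circ\Sigma f\circ\iota\simeq\Sigma\iota_C\circ\Sigma f|_{\Sigma(S^m\wedge B)}\circ\lambda,
\]
so the cofiber of $\mathcal{H}(\tilde{f}\circ q_1)$ is homotopy equivalent to that of $\Sigma\iota_C\circ\Sigma f|_{\Sigma(S^m\wedge B)}$, namely $\Sigma Z$.

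The delicate point is the claim that $\iota$ factors up to homotopy through the $\Sigma(S^m\wedge B)$ summand: in general, reconstructing a map into a wedge of suspensions from its coordinate projections can leave higher Whitehead-type correction terms. Here these vanish because $S^m\star B$ is itself a suspension and the first two coordinate components of $\iota$ are null, so $\iota$ is determined up to homotopy by its third component $\lambda$.
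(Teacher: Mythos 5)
Your reduction — suspend the pushout, split $\Sigma(S^m\times B)$ via James, cone off $S^{m+1}$ to produce $\Sigma(C\cup e^{m+1})$, and identify $\Sigma Z$ with the mapping cone of $\Sigma\iota_C\circ\Sigma f\vert_{\Sigma(S^m\wedge B)}$ — is sound up to that point, and it is a hands-on version of what the paper does more compactly: the paper interposes $\Sigma(S^m\ltimes B)$, which collapses $S^m\times*$ from the start, and reads $\Sigma Z$ off a $2\times 3$ ladder of homotopy pushouts whose outer square exhibits it as the cofiber of $(\Sigma\tilde f)\circ(\Sigma q_1)\circ\iota=\mathcal{H}(\tilde f\circ q_1)$.

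The gap is in your final paragraph. You need that $\iota\colon S^m\star B\to\Sigma(S^m\times B)$ is, up to homotopy, the inclusion of the $\Sigma(S^m\wedge B)$ summand precomposed with the equivalence $\lambda$, and you argue this is forced because $\Sigma p_1\circ\iota$ and $\Sigma p_2\circ\iota$ are null, $\Sigma q\circ\iota=\lambda$, and the source is a suspension. That inference is not valid: a map from a suspension into a wedge of suspensions is \emph{not} determined by its coordinate projections. With $A=B=S^1$, for instance, the composite $S^3\xrightarrow{[\iota_1,\iota_2]}S^2\vee S^2\hookrightarrow S^2\vee S^2\vee S^3\simeq\Sigma(S^1\times S^1)$ has all three coordinate projections zero yet is essential; Whitehead products are invisible to the projections. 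So ``the first two components of $\iota$ are null'' does not, by itself, force $\iota$ into the third summand.

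What saves the argument is that one can \emph{construct} the James equivalence from $\iota$ rather than compare $\iota$ to a preassigned one. Since $\lambda=\Sigma q\circ\iota$ is an equivalence, $\sigma:=\iota\circ\lambda^{-1}$ is a section of $\Sigma q$, and the map $\Sigma(\mathrm{incl}_{S^m})\vee\Sigma(\mathrm{incl}_B)\vee\sigma\colon S^{m+1}\vee\Sigma B\vee\Sigma(S^m\wedge B)\to\Sigma(S^m\times B)$ is a homology isomorphism, hence a homotopy equivalence; because $\Sigma p_1\circ\iota$ and $\Sigma p_2\circ\iota$ vanish, this equivalence intertwines $\Sigma p_1,\Sigma p_2,\Sigma q$ with the three wedge projections, so it is a legitimate James splitting, under which $\iota$ is by construction the third-summand inclusion composed with $\lambda$. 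With that observation supplied, your reduction closes. Alternatively, since $B$ is a suspension in every application of Lemma \ref{SigmaC}, one can invoke the remark following it, $\Sigma q_1\circ\iota\simeq\Sigma s$ together with the cofibration \eqref{s}, which bypasses the wedge-decomposition question altogether.
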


\begin{proof}
  We consider a homotopy commutative diagram
  \[
  \xymatrix{
    S^m \star B \ar[rr]^-{(\Sigma q_1)\circ\iota} \ar[d] && \Sigma(S^m \ltimes B) \ar[rr]^{\Sigma \tilde{f}} \ar[d]^{\Sigma p_2}&& \Sigma(C\cup e^{m+1}) \ar[d]\\
    \ast \ar[rr]&& \Sigma B \ar[rr]&& \Sigma Z
  }
  \]
  where the left and right squares are homotopy pushouts. Thus the total square is a homotopy pushout, completing the proof.
\end{proof}

Remark that, if $B$ is a suspension, then the map $\mathcal{H}(q_1)=\Sigma q_1\circ\iota$ is homotopic to $\Sigma s$, where $s$ is the map as in \eqref{s}.

We give a description of the map $\Sigma(\rho\circ\lambda(\alpha))$ in terms of Hopf construction. Let $f$ denote the map $(\mu(\alpha)\times_{S^m}\mu(\alpha))\vert_{S^m\times(S^{2n+1}\vee S^{2n+1})}$.

\begin{lemma}
  \label{Sigma-rho-lambda 1}
   The map $\Sigma(\rho\circ\lambda(\alpha))$ is homotopic to the composite
  \begin{multline*}
    S^m\star S^{2n+1}\xrightarrow{1\star i_1}S^m\star(S^{2n+1}\vee S^{2n+1})\xrightarrow{\mathcal{H}(f)}\\
    \Sigma(S^{2n+1}\times S^{2n+1})\xrightarrow{\Sigma q}\Sigma S^{2n+1}\wedge S^{2n+1}=S^{4n+3}.
  \end{multline*}
\end{lemma}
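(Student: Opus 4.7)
My plan is to rewrite $\Sigma\lambda(\alpha)$ as a Hopf construction of a map we can compute, and then finish by a cellular check comparing $\rho$ with the canonical top-cell projection.

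Starting from the formula $\lambda(\alpha) = \tilde{\mu}(\alpha) \circ s \circ \Sigma^m i$ proved in Proposition \ref{phi}, and using that $W(\alpha)_{4n+2}$ is a suspension by Corollary \ref{W skeleton suspension}, I invoke the remark following Lemma \ref{SigmaC} to identify $\Sigma s$ with the Hopf construction $\mathcal{H}(q_1)$ of the projection $q_1 \colon S^m \times W(\alpha)_{4n+2} \to S^m \ltimes W(\alpha)_{4n+2}$. Applying the naturality of $\mathcal{H}$ in each factor -- i.e.\ the identities $\mathcal{H}(g \circ (1 \times h)) \simeq \mathcal{H}(g) \circ (1 \star h)$ and $\mathcal{H}(g_2 \circ g_1) \simeq \Sigma g_2 \circ \mathcal{H}(g_1)$, both consequences of the naturality of $\iota$ -- this will yield
\[
  \Sigma\lambda(\alpha) \simeq \mathcal{H}\bigl(\tilde{\mu}(\alpha) \circ q_1 \circ (1 \times i)\bigr) \colon S^m \star S^{2n+1} \to \Sigma W(\alpha)_{4n+2},
\]
after the canonical identification $\Sigma^{m+1} S^{2n+1} \simeq S^m \star S^{2n+1}$ from Lemma \ref{iota retraction}.

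Next I will identify the composite inside $\mathcal{H}$. Unwinding the proof of Lemma \ref{W}, the map $\tilde{\mu}(\alpha) \circ q_1$ is precisely the map on horizontal cofibers induced by the commutative square whose vertical arrows are $\mu(\alpha)$ and $\mu(\alpha) \times_{S^m} \mu(\alpha)$ and whose horizontal arrows are $1 \times \Delta$ and $\Delta$. Factoring the bottom-cell inclusion as $i = \mathrm{quot} \circ i_1$, where $\mathrm{quot} \colon S^{2n+1} \times S^{2n+1} \to W(\alpha)_{4n+2}$ is the canonical quotient, a short element chase then gives
\[
  \tilde{\mu}(\alpha) \circ q_1 \circ (1 \times i) = \mathrm{quot} \circ f \circ (1 \times i_1),
\]
and one more application of naturality yields $\Sigma\lambda(\alpha) \simeq \Sigma\mathrm{quot} \circ \mathcal{H}(f) \circ (1 \star i_1)$. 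Composing with $\Sigma\rho$ reduces the lemma to the cell-level identity $\rho \circ \mathrm{quot} = q$, which I verify directly: the quotient sends the diagonal to the basepoint and $S^{2n+1} \vee S^{2n+1}$ into the bottom sphere of $W(\alpha)_{4n+2}$, while preserving the top cell with degree one; the pinch $\rho$ then kills this bottom sphere, leaving precisely the top-cell projection $q$.

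The main technical hurdle I anticipate is the second step, namely the identification of $\tilde{\mu}(\alpha) \circ q_1$ with the cofiber-induced map; this is only implicit in the proof of Lemma \ref{W} and demands carefully tracking the $\ltimes$ versus $\times$ distinction in the pushout construction.
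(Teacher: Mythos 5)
Your proposal is correct and takes essentially the same route as the paper: both reduce $\Sigma(\rho\circ\lambda(\alpha))$ to a Hopf construction via Lemma \ref{SigmaC} (with its remark), use naturality of $\iota$ to pull $\mathcal{H}$ past $1\star i_1$ and the target map, and finish with the top-cell identity $\rho\circ\mathrm{quot}=q$. The only cosmetic difference is that you compose with the quotient $\mathrm{quot}\colon S^{2n+1}\times S^{2n+1}\to W(\alpha)_{4n+2}$ directly, whereas the paper factors it as $j\circ i$ through the intermediate skeleton $(S^{2n+1}\times S^{2n+1})\cup e^{m+1}$ (and the identity $\lambda(\alpha)=\tilde\mu(\alpha)\circ s\circ\Sigma^m i$ you cite is actually verified in the proof of Proposition \ref{coaction3} rather than in Proposition \ref{phi}).
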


\begin{proof}
  As in the proof of Proposition \ref{upper bound odd}, there is a homotopy pushout
  \[
    \xymatrix{
      S^m\times(S^{2n+1}\times S^{2n+1})\ar[rr]^-{\mu(\alpha)\times_{S^m}\mu(\alpha)}\ar[d]_{p_2}&&S^{2n+1}\times S^{2n+1}\ar[d]\\
      S^{2n+1}\times S^{2n+1}\ar[rr]&&(X(\alpha)\times_{S^{m+1}}X(\alpha)).
    }
  \]
  Then by Lemma \ref{SigmaC}, the skeleton $\Sigma((X(\alpha)\times_{S^{m+1}}X(\alpha))_{m+2n+2})$ has the homotopy type of the cofiber of the map
  \[
  \mathcal{H}(\tilde{f}\circ q_1)\colon S^m\star(S^{2n+1}\vee S^{2n+1})\to\Sigma((S^{2n+1}\times S^{2n+1})\cup e^{m+1}).
  \]
  Clearly, we can get the similar description of $\Sigma (W(\alpha)_{m+2n+2})$ as the cofiber of $\Sigma\lambda(\alpha)$ such that there is a homotopy commutative diagram
  \[
  \xymatrix{
  S^m\star(S^{2n+1}\vee S^{2n+1}) \ar[rr]^{1\star(1+(-1))}\ar[d]^{\mathcal{H}(\tilde{f}\circ q_1)}&& S^m\star S^{2n+1}\ar[d]^{\Sigma\lambda(\alpha)}\\
  \Sigma((S^{2n+1}\times S^{2n+1})\cup e^{m+1}) \ar[rr]^{\Sigma j}\ar[d]&& \Sigma (W(\alpha)_{4n+2})\ar[d]\\
  \Sigma((X(\alpha)\times_{S^{m+1}}X(\alpha))_{m+2n+2}) \ar[rr]&& \Sigma (W(\alpha)_{m+2n+2}),
  }
  \]
  where $j$ is the cofiber map of $\widetilde{\Delta}$ in \eqref{Delta extension} and the two columns are homotopy cofibrations. On the other hand, there is a homotopy commutative diagram
  \[
    \xymatrix{
      (S^{2n+1}\times S^{2n+1})\ar[d]^i\ar[rr]^{q}&&S^{4n+2}\ar@{=}[d]\\
      (S^{2n+1}\times S^{2n+1})\cup e^{m+1}\ar[d]^j\ar[rr]^{\bar{\rho}}&&S^{4n+2}\ar@{=}[d]\\
      W(\alpha)_{4n+2}\ar[rr]^\rho&&S^{4n+2},
    }
  \]
  where $i$ denotes the inclusion and $\bar{\rho}$ denotes the pinch map onto the top cell. Obviously we have $\mathcal{H}(\tilde{f}\circ q_1) = (\Sigma i)\circ \mathcal{H}(f)$, and then we obtain
  \begin{align*}
    (\Sigma\rho)\circ(\Sigma\lambda(\alpha))&=(\Sigma\rho)\circ(\Sigma\lambda(\alpha))\circ(1\star(1+(-1)))\circ(1\star i_1)\\
    &=(\Sigma\rho)\circ(\Sigma j)\circ\mathcal{H}(\tilde{f}\circ q_1)\circ(1\star i_1)\\
    &=(\Sigma\bar{\rho})\circ\mathcal{H}(\tilde{f}\circ q_1)\circ(1\star i_1)\\
    &=(\Sigma\bar{\rho})\circ(\Sigma i)\circ \mathcal{H}(f)\circ(1\star i_1)\\
    &=(\Sigma q)\circ\mathcal{H}(f)\circ(1\star i_1).
  \end{align*}
  Thus the statement is proved.
\end{proof}

\begin{lemma}
  \label{Theta 3}
  The map $\Sigma(\rho\circ\lambda(\alpha))$ is homotopic to the composite
  \[
  S^m\star S^{2n+1} \xrightarrow{\mathcal{H}(\mu(\alpha),\chi(X)\circ p_1)} \Sigma(S^{2n+1}\times S^{2n+1})\xrightarrow{\Sigma q}\Sigma S^{2n+1}\wedge S^{2n+1}=S^{4n+3}.
  \]
\end{lemma}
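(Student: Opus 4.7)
The plan is to start from the description of $\Sigma(\rho\circ\lambda(\alpha))$ furnished by Lemma \ref{Sigma-rho-lambda 1} and absorb the wedge inclusion $1\star i_1$ into the Hopf construction. More precisely, in view of Lemma \ref{Sigma-rho-lambda 1}, it suffices to produce a homotopy
\[
\mathcal{H}(f)\circ(1\star i_1)\simeq\mathcal{H}(\mu(\alpha),\chi(X)\circ p_1),
\]
after which postcomposition with $\Sigma q$ immediately yields the statement.

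The key observation is that the Hopf construction is natural in the second variable. Since $\iota\colon A\star B\to\Sigma(A\times B)$ is natural in $B$, for any map $g\colon B'\to B$ and any $\varphi\colon A\times B\to C$ one has
\[
\mathcal{H}(\varphi)\circ(1_A\star g)=(\Sigma\varphi)\circ\iota\circ(1_A\star g)=(\Sigma\varphi)\circ\Sigma(1_A\times g)\circ\iota=\mathcal{H}(\varphi\circ(1_A\times g)).
\]
Applying this with $A=S^m$, $g=i_1\colon S^{2n+1}\to S^{2n+1}\vee S^{2n+1}$, and $\varphi=f$ reduces the problem to identifying $f\circ(1_{S^m}\times i_1)$ with the pair map $(\mu(\alpha),\chi(X)\circ p_1)$.

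The remaining identification is a direct unwinding of definitions. Since $i_1$ corresponds to the inclusion $S^{2n+1}\times\{*\}\hookrightarrow S^{2n+1}\vee S^{2n+1}\subset S^{2n+1}\times S^{2n+1}$, the composite $f\circ(1\times i_1)$ sends $(b,x)\in S^m\times S^{2n+1}$ to $(\mu(\alpha)(b,x),\mu(\alpha)(b,*))$. Because $\mu(\alpha)|_{S^m\times *}=\chi(X)$ by construction of $\mu(\alpha)$, the second coordinate equals $\chi(X)(b)=(\chi(X)\circ p_1)(b,x)$, and the resulting map is precisely $(\mu(\alpha),\chi(X)\circ p_1)$, as required. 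I do not anticipate a substantive obstacle here: the whole argument is a formal manipulation, and the only point demanding genuine care is the naturality of $\iota$ (hence of $\mathcal{H}$ in its second variable) recorded in the central display above.
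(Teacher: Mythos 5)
Your proposal is correct and is essentially the paper's argument: the paper records exactly the commutative square identifying $f\circ(1\times i_1)$ with $(\mu(\alpha),\chi(X)\circ p_1)$, deduces $\mathcal{H}(f)\circ(1\star i_1)=\mathcal{H}(\mu(\alpha),\chi(X)\circ p_1)$, and then invokes Lemma \ref{Sigma-rho-lambda 1}. You add the explicit justification via naturality of $\iota$ (hence of $\mathcal{H}$) in the second variable, which the paper leaves implicit.
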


\begin{proof}
  By the homotopy commutative diagram
  \[
    \xymatrix{
      S^m\times S^{2n+1}\ar[rr]^{(\mu(\alpha),\chi(X)\circ p_1)}\ar[d]_{1\times i_1}& &S^{2n+1}\times S^{2n+1}\ar@{=}[d]\\
      S^m\times(S^{2n+1}\vee S^{2n+1})\ar[rr]^-{\mu(\alpha)\times_{S^m}\mu(\alpha)}& &S^{2n+1}\times S^{2n+1},
    }
  \]
  we get $\mathcal{H}(f)\circ(1\star i_1)=\mathcal{H}(\mu(\alpha),\chi(X)\circ p_1)$. Thus by Lemma \ref{Sigma-rho-lambda 1}, the proof is complete.
\end{proof}

We then give another description of the map $\rho\circ\lambda(\alpha)$ up to sign.

\begin{theorem}
  \label{Sigma chi}
  There is an equality
  \[
    \Sigma^{2n+1}\chi(X(\alpha))=\pm\rho\circ\lambda(\alpha).
  \]
\end{theorem}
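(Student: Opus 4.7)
The plan is as follows. First I reduce the statement to $\alpha = 0$: by Proposition \ref{coaction3}, $\lambda(\alpha) = \lambda(0) + i\circ\alpha$; since $i$ is the bottom cell inclusion and $\rho$ is the pinch onto the top cell, $\rho\circ i = 0$, whence $\rho\circ\lambda(\alpha) = \rho\circ\lambda(0)$. As $\chi(X(\alpha))$ also does not depend on $\alpha$, it suffices to treat $\alpha = 0$; I then write $\mu$ for $\mu(0)$, set $f_1 = \mu$, $f_2 = \chi(X)\circ p_1$, and $g = (f_1, f_2)$.

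Next I apply Lemma \ref{Theta 3} together with the identity $\Sigma q\circ\mathcal{H}(g) = \mathcal{H}(q\circ g) = \Sigma(q\circ g)\circ\iota$. Rewriting $q\circ g = (f_1\wedge f_2)\circ\overline{\Delta}_A$, where $A = S^m\times S^{2n+1}$, $\overline{\Delta}_A\colon A\to A\wedge A$ is the reduced diagonal, and $f_1\wedge f_2\colon A\wedge A\to S^{2n+1}\wedge S^{2n+1}$ is the smash of maps, I observe that $q\circ g$ is trivial on $\ast\times S^{2n+1}$ and equals $\overline{\Delta}_{S^{2n+1}}\circ\chi(X)$ on $S^m\times\ast$, which is null-homotopic since the reduced diagonal of a sphere is null. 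Under the James splitting $\Sigma(S^m\times S^{2n+1})\simeq \Sigma S^m\vee\Sigma S^{2n+1}\vee\Sigma(S^m\wedge S^{2n+1})$, this forces $\Sigma(q\circ g)$ to vanish on the first two summands and to factor as $\Sigma\tilde{h}$ through the top summand for some $\tilde h\colon S^m\wedge S^{2n+1}\to S^{4n+2}$. By Lemma \ref{iota retraction}, $\iota$ is the inclusion of this top summand up to homotopy equivalence, so $\Sigma(\rho\circ\lambda(0))\simeq\Sigma\tilde{h}$; as the stable range $m+2n+1\le 2(4n+2)-2$ is ensured by $m\le 4n-1$, it suffices to prove $\tilde{h} = \pm\Sigma^{2n+1}\chi(X)$.

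Finally I compute $\tilde h$ by analyzing $(f_1\wedge f_2)\circ\overline{\Delta}_A$ via the stable splitting $A\simeq S^m\vee S^{2n+1}\vee S^{m+2n+1}$ and the resulting nine-summand splitting of $A\wedge A$. A cohomological computation (using $a^2 = b^2 = 0$ in $H^*(A)$ for the degree $m$ and $2n+1$ generators) shows that $\overline{\Delta}_A$, restricted to the top cell $S^{m+2n+1}\subset A$, has nontrivial stable components only into the two cross summands $S^m\wedge S^{2n+1}$ and $S^{2n+1}\wedge S^m$, of degrees $1$ and $\pm 1$ respectively. The stable restrictions of $\mu = f_1$ to the summands of $A$ are $\chi(X),1_{S^{2n+1}},0$ (the top-cell component vanishes since $\alpha = 0$), while those of $f_2 = \chi(X)\circ p_1$ are $\chi(X),0,0$. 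Hence $f_1\wedge f_2$ restricts to $\chi(X)\wedge 0 = 0$ on $S^m\wedge S^{2n+1}$ and to $1_{S^{2n+1}}\wedge \chi(X)$ on $S^{2n+1}\wedge S^m$. Only the latter contributes, giving $\tilde h = \pm(1_{S^{2n+1}}\wedge\chi(X)) = \pm\Sigma^{2n+1}\chi(X)$, the extra sign arising from the smash-swap identification $S^{2n+1}\wedge S^m\cong S^m\wedge S^{2n+1}$.

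The main obstacle is the rigorous justification of the cell-by-cell analysis of $\overline{\Delta}_A$ on the top cell and the absence of higher-order James--Hopf corrections when composing with $f_1\wedge f_2$; both are controlled by staying within the stable range guaranteed by $m\le 4n-1$, where the relevant stable components of maps between wedges of spheres are determined by cohomological data.
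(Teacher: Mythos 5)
Your argument is correct in outcome and genuinely different from the paper's. The paper starts from Lemma~\ref{Theta 3} as you do, but then sets up an explicit map of homotopy cofibration sequences $S^m\vee S^{2n+1}\to S^m\times S^{2n+1}\to S^m\wedge S^{2n+1}$ (diagram \eqref{alpha}), extends it one stage further to the left by the universal Whitehead product $w$, and identifies the induced map on cofibers $\theta$ by computing $[\nabla\circ\chi, i_1]$ with the bilinearity of the Whitehead product and the key identity $[\chi, 1_{S^{2n+1}}]=0$ (which holds precisely because $\chi+1$ extends to $\mu(\alpha)$). This gives $\Sigma\theta=\pm\Sigma^{2n+2}\chi$ directly for every $\alpha$; the $\alpha=0$ reduction you perform first is a tidy but inessential extra step, since the paper's computation is already $\alpha$-independent. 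You instead pass from the Hopf construction to the reduced diagonal $\overline{\Delta}_A$, use the James splitting of $\Sigma(S^m\times S^{2n+1})$ and Lemma~\ref{iota retraction} to isolate the top-cell component $\tilde h$, and compute it by naming the nine stable summands of $A\wedge A$. The two routes buy different things: the Whitehead-product diagram is more self-contained and avoids any stable-splitting discussion, while your decomposition makes visible exactly which cell contributes ($S^{2n+1}\wedge S^m$) and why the others die.

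Two small points in your write-up need repair, though neither affects the conclusion. First, you assert that cohomology (using $a^2=b^2=0$) shows $\overline{\Delta}_A$ restricted to the top cell has nontrivial stable components only into the two cross summands. That is only literally correct for the summands of dimension $m+2n+1$; the component into $S^{2n+1}\wedge S^{2n+1}=S^{4n+2}$ lies in the stable stem $m-2n-1$, which is positive when $m>2n+1$ and is therefore not detected by cohomology. This component is harmless for your final computation only because $f_2\vert_{\ast\times S^{2n+1}}$ is constant, so the $(S^{2n+1},S^{2n+1})$ contribution is killed downstream by $f_2$, not by a vanishing of $\overline{\Delta}_A$; you should say so. Second, the parenthetical ``the top-cell component \emph{of} $\mu$ vanishes since $\alpha=0$'' is not justified and is false in general: $\mu(0)$ is simply the chosen reference extension of $\chi+1$, whose top-cell stable component can be anything. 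Fortunately it again does not matter, because every component of $\overline{\Delta}_A$ on the top cell landing in a summand $S^{m+2n+1}\wedge S^j$ or $S^j\wedge S^{m+2n+1}$ vanishes by connectivity, so the top cell of $\mu$ never enters. With those two explanations supplied, the argument is sound, and the stable-range and Freudenthal bookkeeping you invoke ($m\le 4n-1$) is adequate for both the splitting step and the final desuspension.
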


\begin{proof}
  Recall that $\chi(X(\alpha))=\mu(\alpha)\vert_{S^m\times *}$. Then there is a homotopy commutative diagram
  \begin{equation}
    \label{alpha}
    \xymatrix{
      S^m \vee S^{2n+1} \ar[r] \ar[d]_{\nabla \circ \chi +  i_1} & S^m \times S^{2n+1} \ar[r] \ar[d]_{(\mu(\alpha),\chi \circ p_1)} & S^m \wedge S^{2n+1} \ar[d]^{\theta}\\
      S^{2n+1} \vee S^{2n+1} \ar[r] & S^{2n+1} \times S^{2n+1} \ar[r]& S^{2n+1} \wedge S^{2n+1}
    }
  \end{equation}
  \noindent
  where the two rows are homotopy cofibrations. Since $\Sigma \theta$ coincides with the composite in Lemma \ref{Theta 3}, up to sign,
  \[
  \Sigma \theta = \pm \Sigma(\rho \circ \lambda(\alpha)).
  \]
  As $m<4n+2$, $\chi(X)\colon S^m\to S^{2n+1}$ is a suspension by the Freudenthal suspension theorem. Then we have
  \begin{align*}
    [\nabla \circ \chi(X(\alpha)), i_1]
    &= [i_1 \circ \chi(X(\alpha)) + i_2 \circ \chi(X(\alpha)), i_1]\\
    &= [i_1 \circ \chi(X(\alpha)), i_1] + [i_2 \circ \chi(X(\alpha)), i_1] \\
    &= i_1 \circ [\chi(X(\alpha)), 1_{S^{2n+1}}] + [i_2 \circ \chi(X(\alpha)), i_1].
  \end{align*}
  \noindent
  Then since $\chi(X(\alpha)) + 1_{S^{2n+1}} \colon S^m \vee S^{2n+1} \to S^{2n+1}$ extends to $\mu(\alpha)\colon S^m \times S^{2n+1}\to S^{2n+1}$, we have $[\chi(X(\alpha)), 1_{S^{2n+1}}] = 0$, so
  \[
  [\nabla \circ \chi(X(\alpha)), i_1] =  [i_2 \circ \chi(X(\alpha)), i_1].
  \]
  On the other hand, we have $[i_2 \circ \chi(X(\alpha)), i_1]=-[i_1 \circ \chi(X(\alpha)), i_2]$. Then the diagram \eqref{alpha} extends to
  \[
    \xymatrix{
      S^{m-1} \star S^{2n} \ar[r]^{w} \ar[d]_{-\Sigma^{-1}\chi \star 1} & S^m \vee S^{2n+1} \ar[r] \ar[d]^{\nabla \circ \chi +  i_1}& S^m \times S^{2n+1} \ar[r] \ar[d]^{(\mu(\alpha),\chi \circ p_1)} & S^m \wedge S^{2n+1} \ar[d]^{\theta}\\
      S^{2n} \star S^{2n} \ar[r]^{w} & S^{2n+1} \vee S^{2n+1} \ar[r] & S^{2n+1} \times S^{2n+1} \ar[r]& S^{2n+1} \wedge S^{2n+1}.
    }
  \]
  Hence we get
  \[
  \pm \Sigma(\rho \circ \lambda(\alpha)) = \Sigma \theta = \pm \Sigma^{2n+2} \chi(X(\alpha)).
  \]
  Thus since $\rho \circ \lambda(\alpha)\in\pi_{m+2n+1}(S^{4n+2})$ and $m+2n+1\le2(4n+1)$, the statement is proved by the Freudenthal suspension theorem.
\end{proof}

\begin{corollary}
  \label{sufficient condition}
  If $\Sigma(\eta\circ\chi(X(\alpha)))=0$ and $\overline{H}_{\nabla}(\lambda(\alpha))\neq 0$, then $\wcat(W(\alpha)) = 2$.
\end{corollary}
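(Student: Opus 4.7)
My plan is to reduce the statement to Corollary~\ref{wcat=2}, which requires the two hypotheses $\eta\circ\rho\circ\lambda(\alpha)=0$ and $\overline{H}_\nabla(\lambda(\alpha))\neq 0$. The second condition is directly assumed, so the entire task comes down to deriving $\eta\circ\rho\circ\lambda(\alpha)=0$ from the single hypothesis $\Sigma(\eta\circ\chi(X(\alpha)))=0$.

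First I would invoke Theorem~\ref{Sigma chi}, which identifies
\[
  \rho\circ\lambda(\alpha)=\pm\,\Sigma^{2n+1}\chi(X(\alpha))\quad\text{in}\quad\pi_{m+2n+1}(S^{4n+2}).
\]
Composing on the target with $\eta\colon S^{4n+2}\to S^{4n+1}$ and using the elementary identity $\eta\circ\Sigma f=\Sigma(\eta\circ f)$ (a direct consequence of $\Sigma(g\circ f)=\Sigma g\circ\Sigma f$ together with $\Sigma\eta_N=\eta_{N+1}$), I can migrate the suspensions across $\eta$ step by step to obtain
\[
  \eta\circ\rho\circ\lambda(\alpha)=\pm\,\Sigma^{2n+1}\bigl(\eta\circ\chi(X(\alpha))\bigr)\quad\text{in}\quad\pi_{m+2n+1}(S^{4n+1}).
\]

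The assumption $\Sigma(\eta\circ\chi(X(\alpha)))=0$ then forces every higher suspension to vanish, so the right-hand side is zero and therefore $\eta\circ\rho\circ\lambda(\alpha)=0$. Combining this with the standing assumption $\overline{H}_\nabla(\lambda(\alpha))\neq 0$, Corollary~\ref{wcat=2} immediately yields $\wcat(W(\alpha))=2$, as required.

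The argument is essentially the concatenation of two results already established in the paper, so I do not foresee any real obstacle. The only care required is bookkeeping: tracking the suspension indices on $\eta$ and on $\chi(X(\alpha))$ and confirming that the compositions $\eta\circ(-)$ land in the expected sphere at each stage. Once the identification from Theorem~\ref{Sigma chi} is in place, the naturality of composition with $\eta$ under suspension handles the rest cleanly.
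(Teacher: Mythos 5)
Your proposal is correct and follows essentially the same route as the paper: reduce to Corollary~\ref{wcat=2}, invoke Theorem~\ref{Sigma chi} to identify $\rho\circ\lambda(\alpha)$ with $\pm\Sigma^{2n+1}\chi(X(\alpha))$, and push $\eta$ through the suspensions. The paper in fact records a biconditional (``trivial if and only if'') by invoking the Freudenthal suspension theorem for $E^{2n}\colon\pi_{m+1}(S^{2n+1})\to\pi_{m+2n+1}(S^{4n+1})$, whereas you correctly observe that only the easy direction $\Sigma f=0\Rightarrow\Sigma^{k}f=0$ is needed for the corollary, so Freudenthal can be skipped; this is a harmless minor shortcut, not a different approach.
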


\begin{proof}
Since $m\le 4n-1$, $E^{2n}\colon \pi_{m+1}(S^{2n+1})\to \pi_{m+2n+1}(S^{4n+1})$ is isomorphic by the Freudenthal suspension theorem. Then $\eta\circ\rho\circ\lambda(\alpha) =\pm\eta\circ\Sigma^{2n+1}(\chi(X(\alpha)))$ is trivial if and only if  $\Sigma(\eta\circ\chi(X(\alpha)))$ is trivial. Thus by combining Corollary \ref{wcat=2} and Theorem \ref{Sigma chi}, the statement is proved.
\end{proof}

We are now ready to prove Theorem \ref{main1}. Let $i\colon S^{2n+1}\to W(\alpha)_{4n+2}$ denote the bottom cell inclusion.

\begin{proof}
  [Proof of Theorem \ref{main1}]
  By Lemma \ref{cell decomposition E}, there is a map $\mu\colon S^m\times S^{2n+1}\to S^{2n+1}$ such that $\mu\vert_{S^m\times *} = \beta$ and $X$ is homotopy equivalent to the homotopy pushout of the cotriad
  \[
    S^{2n+1}\xleftarrow{\mu}S^m\times S^{2n+1}\xrightarrow{p_2}S^{2n+1}.
  \]
  By the assumption, $\Sigma(\eta \circ \beta) = 0$. If $\overline{H}_{\nabla}(\lambda(0))$ is nonzero, then $\wcat(W(0)) = 2$ by Corollary \ref{sufficient condition}. Hence by Proposition \ref{upper bound odd} and Corollary \ref{lower bound wcat}, we obtain $\TC[X\to S^{m+1}] = 2$.

  Suppose that $\overline{H}_{\nabla}(\lambda(0))$ is zero. By assumption, $\overline{H}_{\nabla}(\alpha) \neq 0 \in \pi_{m+2n+2}(S^{4n+2})$ and, by Corollary \ref{W skeleton suspension},
  \begin{align*}
    (W(\alpha)_{4n+2})^{\wedge2} &\simeq (W(\alpha)_{4n+2} \wedge S^{2n+1}) \cup_{1_{W(\alpha)_{4n+2}} \wedge [1, -1]} (W(\alpha)_{4n+2} \wedge S^{4n+2}) \\
    &\simeq (S^{4n+2} \vee S^{6n+3}) \vee (S^{6n+3} \vee S^{8n+4}).
  \end{align*}
  Then by the Blakers-Massey theorem, the $(6n+2)$-skeleton of the homotopy fiber of the map $i^{\wedge2}\colon (S^{2n+1})^{\wedge2} \to (W(\alpha)_{4n+2})^{\wedge2}$ is $S^{6n+2}\vee S^{6n+2}$. Since ${m+2n+2} \le 6n+1$, $i^{\wedge2}_*\colon \pi_{m+2n+2}((S^{2n+1})^{\wedge2}) \to\pi_{m+2n+2}((W(\alpha)_{4n+2})^{\wedge2})$ is an isomorphism. Then since $i$ is a suspension by Corollary \ref{W skeleton suspension}, we obtain
  \[
  \overline{H}_{\nabla}(\lambda(\alpha)) = \overline{H}_{\nabla}(\lambda(0) + i_*\alpha) = \overline{H}_{\nabla}(\lambda(0)) + \overline{H}_{\nabla}(i_*\alpha) = i^{\wedge2}_*\overline{H}_{\nabla}(\alpha) \neq 0
  \]
  by Proposition \ref{coaction3}. Thus $\wcat(W(\alpha)) = 2$ by Corollary \ref{sufficient condition}; hence, $\TC[X(\alpha)\to S^{m+1}] = 2$ by Proposition \ref{upper bound odd} and Corollary \ref{lower bound wcat}.
\end{proof}

Next, we prove Corollary \ref{TC=2 family}. Let us recall the relation between the second James-Hopf invariant and the crude Hopf invariant.

\begin{theorem}
  \label{JH and crude}
  \cite[Theorem 5.12.]{BS}
  Let $H\colon\pi_{p}(\Sigma B)\to\pi_{p}(\Sigma (B^{\wedge 2}))$ denote the second James-Hopf invariant and let $\overline{H}_{\nabla}\colon \pi_p(\Sigma B) \to \pi_{p+1}((\Sigma B)^{\wedge2})$ denote the crude Hopf $\nabla$-invariant. Then $\Sigma H = -\overline{H}_{\nabla}$.
\end{theorem}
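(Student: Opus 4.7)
The plan is to compute $\overline{H}_\nabla(\zeta)$ and $\Sigma H(\zeta)$ separately from their definitions using the James reduced product $J(B)\simeq\Omega\Sigma B$, and then match them on the nose. Fix $\zeta\in\pi_p(\Sigma B)$ with adjoint $\hat\zeta\colon S^{p-1}\to\Omega\Sigma B$. By definition, $H(\zeta)$ is the adjoint of the composite $S^{p-1}\xrightarrow{\hat\zeta} J(B)\xrightarrow{H_2} J(B^{\wedge 2})$, where $H_2$ is the second James-Hopf map; after one suspension, $H_2$ corresponds to the projection of $\Sigma J(B)\simeq\bigvee_{k\ge 1}\Sigma B^{\wedge k}$ onto the $k=2$ summand.

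To unwind the crude Hopf invariant, I would first identify the homotopy fiber $F$ of $\Sigma B\vee\Sigma B\to\Sigma B\times\Sigma B$ with the join $\Omega\Sigma B\star\Omega\Sigma B$ via James--Milnor, so that the connecting map $\partial\colon F\to\Omega(\Sigma B\wedge\Sigma B)$ becomes the adjoint of the evaluation pairing $\Sigma(\Omega\Sigma B\wedge\Omega\Sigma B)\to\Sigma B\wedge\Sigma B$. The crude Hopf invariant $\overline{H}_\nabla(\zeta)$ is then the adjoint of the composite
\[
S^{p-1}\xrightarrow{\hat\zeta}\Omega\Sigma B\xrightarrow{\Omega\nabla}\Omega(\Sigma B\vee\Sigma B)\xrightarrow{r}\Omega F\xrightarrow{\Omega\partial}\Omega^2(\Sigma B\wedge\Sigma B).
\]
The key input is the James expansion
\[
(\Omega\nabla)\hat\zeta \;=\; (\Omega i_1)\hat\zeta\cdot(\Omega i_2)\hat\zeta\cdot[i_1,i_2]_\ast H_2(\hat\zeta)\cdot(\text{higher James-Hopf terms})
\]
in $\Omega(\Sigma B\vee\Sigma B)$. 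The first two factors lie in the image of $\Omega\Sigma B\vee\Omega\Sigma B$ and are killed by the retraction $r$ from Lemma \ref{retraction}; the higher terms live in third-and-higher James filtrations and project trivially to $\Omega^2(\Sigma B\wedge\Sigma B)$ by a connectivity argument. The surviving middle factor $[i_1,i_2]_\ast H_2(\hat\zeta)$ is carried by $\Omega\partial$ to the double loop of the composition of $H_2(\hat\zeta)$ with the canonical generator $\Sigma(\Omega\Sigma B\wedge\Omega\Sigma B)\to\Sigma B\wedge\Sigma B$, which, upon taking adjoints, equals $\pm\,\Sigma H(\zeta)$.

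The last step is to verify that the sign is $-1$. This reduces to comparing the standard orientation of the join $A\star B$ (as a double mapping cylinder) with the standard orientation of $\Sigma(A\wedge B)$ via the canonical map $\iota\colon A\star B\to\Sigma(A\wedge B)$, which introduces exactly one sign. The hard part is the careful bookkeeping of signs through the two suspension adjunctions (one forming $\hat\zeta$, one forming $\Sigma H$) together with the James--Milnor identification of $F$. A clean way to organize this is to pass to Boardman--Steer's combinatorial model of higher Hopf invariants on $J(B)$: there, both $H$ and $\overline{H}_\nabla$ are read off from the same cellular filtration of $J(B)$, the role of the Whitehead product $[i_1,i_2]$ is encoded by the attaching map of the $2$-fold James cell $B^{\wedge 2}$, and the sign $-1$ appears explicitly as the orientation twist between this attaching map and the reduced diagonal. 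This is the strategy underlying \cite[Theorem 5.12]{BS}.
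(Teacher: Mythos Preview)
The paper does not prove this statement at all: Theorem~\ref{JH and crude} is quoted verbatim from Boardman--Steer \cite[Theorem 5.12]{BS} and is used as a black box, so there is no ``paper's own proof'' to compare against. Your sketch is a reasonable outline of the argument that actually appears in \cite{BS}, and you yourself acknowledge this in the final sentence; in that sense the approaches coincide by construction, not by independent discovery.

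That said, since you have chosen to sketch a proof, a brief remark on its content. The overall architecture---identify $F\simeq\Omega\Sigma B\star\Omega\Sigma B$, expand $\Omega\nabla$ via the James filtration, kill the linear terms with the retraction $r$, and reduce to the Whitehead-product term carrying $H_2(\hat\zeta)$---is correct and is indeed the Boardman--Steer strategy. The one place where your sketch is genuinely incomplete is the claim that the ``higher James--Hopf terms'' project trivially to $\Omega^2(\Sigma B\wedge\Sigma B)$ by a connectivity argument: this is not a connectivity statement (the target is not highly connected relative to the source), but rather an algebraic statement about how iterated Whitehead products of $i_1,i_2$ of length $\ge 3$ map under the fold-then-smash composite $\Sigma B\vee\Sigma B\to\Sigma B\wedge\Sigma B$. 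In Boardman--Steer this is handled by their cup-product formalism for Hopf invariants rather than by connectivity. The sign determination is also more delicate than a single orientation twist of $\iota$; in \cite{BS} it comes from their explicit convention for the suspension isomorphism and the identification of the Whitehead product with the boundary map, and one really does need their bookkeeping to pin down $-1$ rather than $+1$.
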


\begin{lemma}
  \label{Hopf nontrivial family}
 Let $k=1, 2, 5$ or $8i-1$ with $i\ge1$. Then, for each $k$, there are infinitely many $n$ with $2n-3\ge k$ such that $\overline{H}_{\nabla}(\alpha)\neq0$ for some $\alpha\in\pi_{k+4n+2}(S^{2n+1})$.
\end{lemma}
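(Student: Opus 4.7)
The plan is to translate the statement about $\overline{H}_{\nabla}$ into one about the second James-Hopf invariant $H$ via Theorem \ref{JH and crude}, and then use the EHP exact sequence to reduce the question to the vanishing of certain Whitehead-square compositions with stable classes.

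First, I would invoke Theorem \ref{JH and crude} to replace $\overline{H}_{\nabla}(\alpha)$ by $-\Sigma H(\alpha)$, where $H\colon\pi_{k+4n+2}(S^{2n+1})\to\pi_{k+4n+2}(S^{4n+1})$ is the second James-Hopf invariant. The hypothesis $2n-3\ge k$ places both $\pi_{k+4n+2}(S^{4n+1})$ and $\pi_{k+4n+3}(S^{4n+2})$ comfortably inside the Freudenthal stable range, so each is canonically isomorphic to $\pi_{k+1}^S$ and the suspension $\Sigma$ between them is the identity. Hence it suffices to produce, for each listed $k$ and infinitely many $n$, a class $\alpha\in\pi_{k+4n+2}(S^{2n+1})$ whose $H(\alpha)$ is nonzero when viewed as an element of $\pi_{k+1}^S$.

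Next I would apply the EHP exact sequence
\[
  \pi_{4n+k+2}(S^{2n+1}) \xrightarrow{H} \pi_{4n+k+2}(S^{4n+1}) \xrightarrow{P} \pi_{4n+k}(S^{2n})
\]
to identify $\mathrm{im}(H)$ with $\ker(P)$. The problem thus becomes: for each listed $k$, produce a nonzero stable class $\xi\in\pi_{k+1}^S$ whose metastable representative in $\pi_{4n+k+2}(S^{4n+1})$ lies in $\ker(P)$ for infinitely many $n$. The natural candidates are $\xi=\eta^2\in\pi_2^S$ for $k=1$, $\xi=\nu\in\pi_3^S$ for $k=2$, $\xi=\nu^2\in\pi_6^S$ for $k=5$, and $\xi=\eta\rho_i\in\pi_{8i}^S$ for $k=8i-1$, where $\rho_i\in\pi_{8i-1}^S$ is a generator of the image of the $J$-homomorphism.

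The hard part will be executing the vanishing step. On appropriate desuspensions the map $P$ is composition with the Whitehead square $[\iota_{2n},\iota_{2n}]\in\pi_{4n-1}(S^{2n})$, so the task reduces to verifying $[\iota_{2n},\iota_{2n}]\circ\widetilde{\xi}=0$ in $\pi_{4n+k}(S^{2n})$ for infinitely many $n$. For $k=1,2,5$ I would extract this from Toda's explicit tables of $2$-primary unstable homotopy of spheres, isolating an infinite congruence family of $n$ for which the relevant composition vanishes; for $k=8i-1$ I would instead rely on Bott periodicity together with Adams's description of the image of $J$ via the $e$-invariant to produce an infinite family of $n$ on which the composition with $\eta\rho_i$ is zero. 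These case-by-case $2$-primary computations form the technical crux of the argument.
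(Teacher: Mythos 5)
Your skeleton matches the paper's: translate $\overline{H}_{\nabla}$ to the second James--Hopf invariant via Theorem \ref{JH and crude}, suspend into the stable range (valid under $k\le 2n-3$), use the EHP exact sequence to identify $\mathrm{im}(H)=\ker(P)$, and reduce to showing that, for each listed $k$, a nonzero class $\alpha'$ in $\pi_{k+2n+1}(S^{2n})\cong\pi_{k+1}^S$ has $[\alpha',\iota_{2n}]=0$ for infinitely many $n$. Up to here you and the paper coincide.

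The gap is exactly where you flag it: the case-by-case vanishing of the Whitehead products is never actually established, and the methods you propose would not deliver it. Toda's tables are confined to a bounded range of stems and sphere dimensions, whereas here $[\alpha',\iota_{2n}]$ lives in $\pi_{4n+k}(S^{2n})$ with $n$ unbounded; one needs metastable \emph{formulas} for these Whitehead products rather than low-dimensional tables. Likewise, Bott periodicity and the $e$-invariant describe \emph{stable} classes, but the vanishing of $[\eta\rho_i,\iota_{2n}]$ is an unstable question that the $J$-homomorphism alone does not decide. What the paper actually invokes are precisely such unstable criteria: Hilton for $[\eta^2,\iota_{2n}]$, Thomeier for $[\eta^3,\iota_{2n}]$ and (with Kristensen--Madsen) $[\nu^2,\iota_{2n}]$, and Mahowald's Theorem D giving a $2$-adic valuation condition for $[\eta\sigma_i,\iota_{2n}]=0$. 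Without results of this type in hand the argument is incomplete.

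A secondary point: for $k=2$ you pick $\nu\in\pi_3^S$, whereas the paper uses $\eta^3=12\nu$, for which Thomeier's criterion ($n$ odd) is available off the shelf. Nothing a priori prevents $\nu$ from working, but the vanishing locus of $[\nu,\iota_{2n}]$ is a different (and to my knowledge more delicate) statement, so if you insist on $\nu$ you would need to supply a reference or a proof for it separately; it is not interchangeable with $\eta^3$ without justification.
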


\begin{proof}
  Note that $\overline{H}_{\nabla}(\alpha)\in\pi_{k+4n+3}(S^{4n+2})$ and $k+4n+2\le6n-1 <8n$. Then by Theorem \ref{JH and crude} and the Freudenthal suspension theorem, $E\colon\pi_{k+4n+2}(S^{4n+1})\to\pi_{k+4n+3}(S^{4n+2})$ is isomorphic, and thus, $\overline{H}_{\nabla}(\alpha)\neq0$ if and only if $H(\alpha)\neq0$. Since $k+4n+2\le6n-1$, the EHP sequence
  \[
  \pi_{k+4n+2}(S^{2n+1}) \xrightarrow{H} \pi_{k+4n+2}(S^{4n+1}) \xrightarrow{P} \pi_{k+4n}(S^{2n}) \xrightarrow{E} \pi_{k+4n+1}(S^{2n+1})
  \]
  is exact. We claim that, for each $k$ above, there are infinitely many $n$ such that $[\alpha', 1_{S^{2n}}] = 0$ for some $\alpha' \neq 0 \in \pi_{k+2n+1}(S^{2n})$. Then by the EHP sequence, there exists $\alpha\in\pi_{k+4n+2}(S^{2n+1})$ such that $H(\alpha) = E^{2n+1}\alpha'$. Since $k+2n+1\le4n-2$,
  \[
  E^{2n+1}\colon \pi_{k+2n+1}(S^{2n})\to\pi_{k+4n+2}(S^{4n+1})
  \]
  is an isomorphism by the Freudenthal suspension theorem and we obtain $E^{2n+1}\alpha'\neq0$. Thus the statement follows.

  Let $\nu$ denote the generator of the $2$-component of the stable stem $\pi_3^S\cong\Z/12$. Then by \cite[Chapter V]{To}, $\eta^2, \eta^3$ and $\nu^2$ have order $2$ in the stable stem $\pi_2^S, \pi_3^S$ and $\pi_6^S$, respectively. For $i\ge1$, let $\eta\sigma_i \in \pi_{8i}^S$ denote the element of order $2$ in \cite[Chapter I]{M}. Then we have:
 \begin{itemize}
   \item $[\eta^2, 1_{S^{2n}}] = 0$ if and only if $n \equiv 1 \pmod 2$ by \cite[Corollary 2]{Hi}.
   \item $[\eta^3, 1_{S^{2n}}] = 0$ if and only if $n \equiv 1 \pmod 2$ by \cite[2.10]{Th}.
   \item $[\nu^2, 1_{S^{2n}}] = 0$ if and only if $n \equiv 0 \pmod 4$ by \cite[Theorem 1.3]{KrM} and \cite[2.11]{Th}.
   \item $[\eta\sigma_i, 1_{S^{2n}}] = 0$ if $3\le \nu_2(8i+2n+2)\le 4i$ by \cite[Theorem D]{M}.
 \end{itemize}
 Here $\nu_2(l)$ denotes the $2$-exponent of $l$. Therefore, our claim is proved.
\end{proof}

\begin{proof}
  [Proof of Corollary \ref{TC=2 family}]
  By Lemma \ref{Hopf nontrivial family}, there are infinitely many pairs $(m,n)$ of integers with
  \[
  1\le n,\quad 2n+1\le m\le4n-1\quad\text{and}\quad m-(2n+1)=1, 2, 5\quad\text{or}\quad8i-1,
  \]
  such that $\overline{H}_{\nabla}(\alpha) \neq 0$ for some $\alpha\in\pi_{m+2n+1}(S^{2n+1})$.

  Let $S^{2n+1}\to X\to S^{m+1}$ be the trivial bundle. Then we have $\chi(X) = 0$ and $\TC[X\to S^{m+1}] = 1$. By Corollary \ref{sufficient condition}, it follows that $\wcat(W(\alpha)) = 2$ as $\eta\circ\chi(X(\alpha))=0$. Thus by Proposition \ref{upper bound odd} and Corollary \ref{lower bound wcat}, we obtain $\TC[X(\alpha)\to S^{m+1}] = 2$, completing the proof.
\end{proof}

\begin{remark}
  For the $m-(2n+1)=8i-1$ cases above, there are infinitely many homotopy fibrations $S^{2n+1}\to X\to S^{m+1}$ such that
  \[
  \chi(X)\neq0\quad\text{and}\quad\TC[X\to S^{m+1}] = 2.
  \]
  Namely by \cite{Ad}, there always exists an odd component $\beta\in\pi_{8i-1}^S$ for each $i$. Let $2l+1$ be the order of $\beta$. Then since $[1_{S^{2n+1}},1_{S^{2n+1}}]$ have at most order $2$, we get
  \begin{align*}
    [\beta, 1_{S^{2n+1}}] &= [1_{S^{2n+1}},1_{S^{2n+1}}]\circ \beta\\
    &= [1_{S^{2n+1}},1_{S^{2n+1}}]\circ \beta \circ (2l+1-2l)\\
    &= [1_{S^{2n+1}},1_{S^{2n+1}}]\circ \beta \circ (2l+1)- [1_{S^{2n+1}},1_{S^{2n+1}}]\circ \beta\circ 2l \\
    &= 0 - [1_{S^{2n+1}},1_{S^{2n+1}}]\circ (2l)_{S^{4n+1}}\circ\beta = 0.
  \end{align*}
  Similarly, we get $\eta\circ\beta=0$. Thus by combining Theorem \ref{main1} and Corollaries \ref{sufficient condition} and \ref{Hopf nontrivial family}, we can prove the existence of such homotopy fibrations as stated.
\end{remark}

%%%%% Section 5 %%%%%

\section{Proof of Theorem \ref{main 0} (1)}\label{Proof 0}

Throughout this section, let $V\to S^{2n+1}$ be a vector bundle of rank $2n+1$ with $n\ge 1$. Let $X\to S^{2n+1}$ be the unit sphere bundle of $V$ and let $\widetilde{X}\to S^{2n+1}$ be the orthonormal $2$-frame bundle of $V$. Then there is a fiber bundle $S^{2n-1}\to\widetilde{X}\xrightarrow{p}X$.

For a map $X\to Y$, let $Y\star_XY$ denote the fiberwise join of two copies of $Y$ over $X$.

\begin{proposition}
  \label{secat join}
  \cite[Theorem 3]{Sv}
  For a map $X\to Y$, $\mathsf{secat}(X\to Y)\le 1$ if and only if $Y\star_XY\to X$ has a right homotopy inverse.
\end{proposition}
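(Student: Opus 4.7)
The plan is to follow Schwarz's classical dictionary between sectional category and fiberwise joins. First I would replace $X\to Y$ by an equivalent Hurewicz fibration via the mapping path space construction; this leaves the sectional category unchanged and turns the ``right homotopy inverse'' question for the join projection into a genuine section question via the homotopy lifting property, so it suffices to treat the case of a fibration, after which the two directions become symmetric constructions.

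For the forward direction, one starts from an open cover $Y = U_0 \cup U_1$ on which the map admits sections $s_0$ and $s_1$, and chooses a subordinate partition of unity $\{\phi_0, \phi_1\}$. One then builds a section of the fiberwise join projection by the formula
\[
y \longmapsto [\,s_0(y),\, s_1(y),\, \phi_1(y)\,],
\]
where the join coordinate $t\in[0,1]$ collapses the $i$-th entry whenever $\phi_i(y) = 0$; this is well defined because $s_i(y)$ is only required on $U_i = \{\phi_i > 0\}$.

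For the converse, given a section $\sigma$ of the fiberwise join, write $\sigma(y) = [e_0(y), e_1(y), t(y)]$ in join coordinates and set $U_0 = \{y : t(y) < 1\}$ and $U_1 = \{y : t(y) > 0\}$. These two open sets cover $Y$, and on each $U_i$ one reads off the $i$-th join coordinate of $\sigma$ to obtain a local section of the original map.

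The main technical obstacle is continuity of the extracted local sections at the boundary values $t\in\{0,1\}$, where the join coordinates individually degenerate; this is handled by the equivalence relation defining the fiberwise join and is the standard step in \cite{Sv}. No homotopy-theoretic obstructions arise, so once the fibration reduction is performed the argument is entirely point-set.
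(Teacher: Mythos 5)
The paper itself offers no proof of this proposition; it simply cites Schwarz's Theorem~3 and uses the equivalence as a black box (note the paper's statement has $X$ and $Y$ transposed; what is actually used elsewhere is: for $p\colon E\to B$, $\mathsf{secat}(p)\le 1$ iff $E\star_B E\to B$ admits a right homotopy inverse). Your sketch reproduces Schwarz's classical argument, and both directions of the partition-of-unity/join-coordinate translation are the right idea.

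Two points deserve more care, however. First, the reduction to a fibration is where the real homotopy-theoretic content lives, and your account of it is incomplete. You correctly observe that passing to the mapping-path fibration $\tilde p\colon \tilde E\to B$ preserves $\mathsf{secat}$ and lets you convert ``right homotopy inverse'' to ``section'' on the left-hand side. But on the right-hand side you also need to know that the fiberwise join $E\star_B E\to B$ has a right homotopy inverse iff $\tilde E\star_B \tilde E\to B$ does. This is not automatic: it requires that the canonical map $E\star_B E\to \tilde E\star_B\tilde E$ over $B$ be a fiber-homotopy equivalence, which in turn uses a gluing/Dold-type argument for the pushout defining the join (or, equivalently, that the fiberwise join functor sends fiber-homotopy equivalences over $B$ to fiber-homotopy equivalences). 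Your claim that ``no homotopy-theoretic obstructions arise, so once the fibration reduction is performed the argument is entirely point-set'' is therefore misleading: the fibration reduction itself contains a homotopy-theoretic step you have not verified. Second, the partition-of-unity construction in the forward direction requires $B$ to be at least normal (Schwarz assumes paracompactness), and you should state this hypothesis. With these two points patched your proof matches Schwarz's; without them the forward direction and the reduction step have genuine gaps.
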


We aim to show $\mathsf{secat}(p\colon\widetilde{X}\to X)\le 1$. By Proposition \ref{secat join}, this is equivalent to find a right homotopy inverse of the map $p\star_Xp\colon\widetilde{X}\star_X\widetilde{X}\to X$. For the rest of this section, we only consider cohomology with mod $2$ coefficients unless otherwise specified.

\begin{lemma}
  \label{H(E)}
  The mod $2$ cohomology of $X$ is given by
  \[
    H^*(X)=\Lambda(x_{2n},x_{2n+1}),\quad|x_i|=i.
  \]
\end{lemma}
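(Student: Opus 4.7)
The plan is to apply the mod $2$ Serre spectral sequence of the sphere bundle $S^{2n}\to X\to S^{2n+1}$ (equivalently, the mod $2$ Gysin sequence). Since $S^{2n+1}$ is simply connected, the $E_2$-page is $E_2^{p,q}=H^p(S^{2n+1};\mathbb{Z}/2)\otimes H^q(S^{2n};\mathbb{Z}/2)$, which is nonzero only in bidegrees $(0,0)$, $(0,2n)$, $(2n+1,0)$, $(2n+1,2n)$. The only possibly nontrivial differential is the transgression $d_{2n+1}\colon E_{2n+1}^{0,2n}\to E_{2n+1}^{2n+1,0}$, which sends the fiber generator to the mod $2$ Euler class of $V$, namely $w_{2n+1}(V)$.

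The main step is to show $w_{2n+1}(V)=0$. Since $n\ge 1$, the groups $H^1(S^{2n+1};\mathbb{Z}/2)$ and $H^{2n}(S^{2n+1};\mathbb{Z}/2)$ vanish, so $w_1(V)=0$ and $w_{2n}(V)=0$. Wu's formula gives
\[
Sq^1(w_{2n})=w_1\smile w_{2n}+w_{2n+1},
\]
from which $w_{2n+1}(V)=0$ follows. (As an alternative, $V$ is orientable of odd rank, so the integral Euler class satisfies $2e(V)=0$; since $H^{2n+1}(S^{2n+1};\mathbb{Z})\cong\mathbb{Z}$ is torsion-free, $e(V)=0$ integrally and therefore $w_{2n+1}(V)=0$ mod $2$.)

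With the transgression vanishing, the spectral sequence collapses at $E_2$, and $H^*(X;\mathbb{Z}/2)$ is one-dimensional in degrees $0,2n,2n+1,4n+1$ and zero elsewhere. Let $x_{2n+1}\in H^{2n+1}(X)$ be the pullback of the generator of the base, and choose $x_{2n}\in H^{2n}(X)$ whose restriction to the fiber generates $H^{2n}(S^{2n};\mathbb{Z}/2)$. Then $x_{2n}^2\in H^{4n}(X)=0$ and $x_{2n+1}^2\in H^{4n+2}(X)=0$ for degree reasons, while the multiplicativity of the spectral sequence forces $x_{2n}\smile x_{2n+1}$ to generate $H^{4n+1}(X)\cong\mathbb{Z}/2$. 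This yields the desired algebra isomorphism $H^*(X)\cong\Lambda(x_{2n},x_{2n+1})$; the only substantive point is the vanishing of $w_{2n+1}(V)$, and the rest is forced by bidegree considerations.
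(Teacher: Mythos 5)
Your proof is correct and takes essentially the same approach as the paper: both reduce to showing the Euler class of the bundle vanishes so that the Serre spectral sequence collapses, and your parenthetical ``alternative'' using $2e(V)=0$ for an odd-rank orientable bundle over $S^{2n+1}$ is precisely the argument the paper gives. The Wu-formula route you lead with is a harmless variant of the same point.
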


\begin{proof}
  It is sufficient to show that the mod $2$ cohomology Serre spectral sequence for the fiber bundle $S^{2n}\to X\to S^{2n+1}$ collapses at the $E_2$-term, which is equivalent to the Euler class $e(X)$ being trivial. Since $2e(X)=0$ in $H^{2n+1}(S^{2n+1};\Z)\cong\Z$, so $e(X)=0$.
\end{proof}

\begin{lemma}
  \label{H(EE)}
  The mod $2$ cohomology of $\widetilde{X}$ is given by
  \[
    H^*(\widetilde{X})=\Lambda(x_{2n-1},x_{2n},x_{2n+1}),\quad|x_i|=i
  \]
  such that $p^*(x_i)=x_i$ for $i=2n,2n+1$.
\end{lemma}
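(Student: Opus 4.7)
The plan is to apply the Serre spectral sequence (with $\F_2$-coefficients) of the sphere bundle $S^{2n-1}\to\widetilde{X}\xrightarrow{p} X$. First, I would identify $p$ as the unit sphere bundle of an explicit rank-$2n$ vector bundle over $X$: writing $\pi\colon X\to S^{2n+1}$ for the projection, the pullback $\pi^*V$ carries the tautological unit section defined by $X=S(V)$, so it splits as $\pi^*V\cong\varepsilon\oplus V'$ for a trivial line bundle $\varepsilon$ and a rank-$2n$ bundle $V'$, and tautologically $\widetilde{X}\cong S(V')$ over $X$.

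Next, I would show that the transgression $\tau(y_{2n-1})\in H^{2n}(X;\F_2)$ of the fiber fundamental class vanishes. It equals the mod $2$ Euler class $w_{2n}(V')$, and the Whitney sum formula together with naturality yields
\[
w_{2n}(V')=w_{2n}(\varepsilon\oplus V')=w_{2n}(\pi^*V)=\pi^*w_{2n}(V)=0
\]
because $H^{2n}(S^{2n+1};\F_2)=0$. Hence the spectral sequence collapses at $E_2$, and Lemma \ref{H(E)} gives
\[
E_\infty\cong H^*(X;\F_2)\otimes\Lambda(y_{2n-1})\cong\Lambda(x_{2n},x_{2n+1},y_{2n-1})
\]
as bigraded algebras.

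Finally, I would resolve the multiplicative extensions. Setting $x_{2n}:=p^*x_{2n}$ and $x_{2n+1}:=p^*x_{2n+1}$ makes the naturality $p^*(x_i)=x_i$ automatic, and since $H^{2n-1}(\widetilde{X};\F_2)$ is one-dimensional (all other possible $E_\infty$-contributions being killed by Lemma \ref{H(E)}), the lift $x_{2n-1}$ of $y_{2n-1}$ is canonical. The total degrees in which $E_\infty$ is nontrivial are $\{0,2n-1,2n,2n+1,4n-1,4n,4n+1,6n\}$, which forces $x_{2n-1}^2=0=x_{2n+1}^2$ for lack of target classes, and the relation $x_{2n}^2=0$ descends from Lemma \ref{H(E)} via $p^*$. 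The main obstacle is the potential extension in total degree $4n$, where both $x_{2n-1}x_{2n+1}$ and $x_{2n}^2$ land and where $E_\infty$ has room for a nonzero class; however, this is precisely what the $p^*$-naturality of $x_{2n}^2$ rules out, completing the identification $H^*(\widetilde{X};\F_2)\cong\Lambda(x_{2n-1},x_{2n},x_{2n+1})$.
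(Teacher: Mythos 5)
Your argument is correct (for $n\ge 2$, which is all that is ever used in the paper) but takes a genuinely different route. To kill the transgression in the Serre spectral sequence of $S^{2n-1}\to\widetilde X\to X$, the paper maps in the fibration $S^{2n-1}\to V_2(\R^{2n+1})\to S^{2n}$ obtained by restricting to a fiber of $X\to S^{2n+1}$, observes that this is the unit tangent sphere bundle of $S^{2n}$ whose mod $2$ Serre spectral sequence collapses, and invokes naturality. You instead split $\pi^*V\cong\varepsilon\oplus V'$ using the tautological section, identify $\widetilde X = S(V')$, and compute $w_{2n}(V')=\pi^*w_{2n}(V)=0$ by the Whitney sum formula; this is a more direct and self-contained way to reach the same vanishing (and it is conceptually equivalent, since $V'$ restricted to a fiber $S^{2n}$ is $TS^{2n}$). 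You also treat the multiplicative extensions explicitly, whereas the paper's proof passes directly from the collapse at $E_2$ to the ring statement without comment.

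One caveat: your degree argument ruling out $x_{2n-1}^2$ and $x_{2n+1}^2$ (which live in degrees $4n-2$ and $4n+2$) requires $n\ge 2$; when $n=1$ these degrees coincide with $2n$ and $6n$, so both lie in your list of nonvanishing degrees and the argument breaks down. In fact the lemma as stated fails for $n=1$: since $\pi_3(BO(3))=0$ the bundle $V\to S^3$ is trivial, so $\widetilde X\cong S^3\times V_2(\R^3)\cong S^3\times\R P^3$, whose $\F_2$-cohomology contains a degree-one class with nonzero square and hence is not exterior. This is an imprecision already present in the paper (the section hypothesis is $n\ge1$), and the paper's own proof, which also stops at the collapse of $E_2$, is subject to the same objection; the application in Theorem \ref{main 0} (1) treats $n=1$ separately, so nothing downstream is affected. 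It would be worth stating the restriction $n\ge2$ explicitly in your write-up.
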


\begin{proof}
  Let $V_2(\R^{2n+1})$ be the space of orthonormal $2$-frames in $\R^{2n+1}$. Then there is a commutative diagram
  \[
    \xymatrix{
      S^{2n-1}\ar@{=}[r]\ar[d]&S^{2n-1}\ar[r]\ar[d]&\ast\ar[d]\\
      V_2(\R^{2n+1})\ar[r]\ar[d]&\widetilde{X}\ar[r]\ar[d]^p&S^{2n+1}\ar@{=}[d]\\
      S^{2n}\ar[r]&X\ar[r]&S^{2n+1}
    }
  \]
  where all columns and rows are fibrations. We note that the left column is the unit tangent bundle of $S^{2n}$. Then the mod $2$ cohomology Serre spectral sequence for the left column collapses at the $E_2$-term. Thus by Lemma \ref{H(E)} and naturality of spectral sequences, the mod $2$ cohomology Serre spectral sequence for the middle column also collapses at the $E_2$-term. Therefore by Lemma \ref{H(E)}, the statement follows.
\end{proof}

\begin{lemma}
  \label{E*E}
  The mod $2$ cohomology of $\widetilde{X}\star_X\widetilde{X}$ is given by
  \[
    H^*(\widetilde{X}\star_X\widetilde{X})=\Lambda(x_{2n},x_{2n+1},x_{4n-1}),\quad|x_i|=i
  \]
  such that $(p\star_Xp)^*(x_i)=x_i$ for $i=2n,2n+1$.
\end{lemma}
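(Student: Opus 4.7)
The plan is to use the Serre spectral sequence of the fibration obtained by fiberwise-joining $p\colon \widetilde{X}\to X$ with itself. Since $p$ has fiber $S^{2n-1}$ and $S^{2n-1}\star S^{2n-1}\simeq S^{4n-1}$, this yields a fiber bundle
\[
  S^{4n-1}\to \widetilde{X}\star_X\widetilde{X}\xrightarrow{p\star_Xp} X.
\]
The base $X$ is simply connected (it fits into the fibration $S^{2n}\to X\to S^{2n+1}$ with $n\ge 1$), so no local coefficient subtleties arise and the $E_2$-page factors as a tensor product.

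First I would identify the $E_2$-page as $H^*(X)\otimes H^*(S^{4n-1})$. The only possibly non-zero differential is the transgression $\tau\colon H^{4n-1}(S^{4n-1})\to H^{4n}(X)$ of the fiber generator. By Lemma \ref{H(E)}, $H^*(X)=\Lambda(x_{2n},x_{2n+1})$, so $H^{4n}(X)=0$ and the transgression must vanish. The spectral sequence therefore collapses at $E_2$, giving $E_\infty\cong\Lambda(x_{2n},x_{2n+1})\otimes H^*(S^{4n-1})$ as a bigraded algebra. Naturality of the Serre spectral sequence then immediately forces $(p\star_Xp)^*(x_i)=x_i$ for $i=2n,2n+1$, since these classes are pulled back from the base.

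The remaining step is to rule out multiplicative extensions: after choosing a lift $x_{4n-1}$ of the fiber generator, I must verify $x_{2n}^2 = x_{2n+1}^2 = x_{4n-1}^2 = 0$ in $H^*(\widetilde{X}\star_X\widetilde{X})$. These squares live in degrees $4n$, $4n+2$, and $8n-2$; inspecting the list of basis classes of the collapsed $E_\infty$ shows that (for $n\ge 2$) the associated graded vanishes in each of these degrees, so the total cohomology vanishes there and each square is forced to be zero. This identifies $H^*(\widetilde{X}\star_X\widetilde{X})$ with $\Lambda(x_{2n},x_{2n+1},x_{4n-1})$. The argument is largely mechanical once the fiberwise-join fibration is set up; the only conceptual input is Lemma \ref{H(E)}, which supplies the crucial vanishing $H^{4n}(X)=0$ needed for the collapse, and the mild obstacle is bookkeeping the degrees to confirm there is no room for a non-trivial multiplicative extension.
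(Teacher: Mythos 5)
Your proof is correct and takes a genuinely different route from the paper's, and in one respect it is more careful. The paper argues for the collapse of the Serre spectral sequence of $S^{4n-1}\to\widetilde{X}\star_X\widetilde{X}\to X$ by identifying this as the unit sphere bundle of $V'\oplus V'$ (where $V'$ is the rank-$2n$ bundle over $X$ whose sphere bundle is $\widetilde{X}$), using $e(V'\oplus V')=e(V')^2$ and the divisibility of $e(V')$ by $2$ extracted from the proof of Lemma \ref{H(EE)}. You instead observe directly that the transgression has target $H^{4n}(X;\Z/2)$, which vanishes by Lemma \ref{H(E)}, so there is nothing for it to hit — a cleaner argument that also renders the Euler-class bookkeeping unnecessary (indeed, the Gysin sequence for $S^{2n}\to X\to S^{2n+1}$ already gives $H^{4n}(X;\Z)=0$, so $e(V'\oplus V')=0$ for trivial reasons; the paper's divisibility step is a detour). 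You are also the one who explicitly addresses the multiplicative extension problem (showing $x_{4n-1}^2=0$ lives in a vanishing group), which the paper's proof silently passes over after declaring collapse. The one limitation is your restriction to $n\geq 2$: for $n=1$ the degree count fails since $H^{8n-2}=H^6$ is nonzero, and one would need a different argument (e.g.\ via $\operatorname{Sq}^{4n-1}$ and Wu's formula, using $w_{4n-1}(V'\oplus V')=0$). In practice this is harmless: Lemma \ref{E*E} feeds into Proposition \ref{upper bound even}, which itself requires $n\geq 2$, and the $n=1$ case of Theorem \ref{main 0}(1) is dispatched separately from triviality of $\pi_3(BO(3))$.
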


\begin{proof}
  Note that the fiber bundle $S^{2n-1}\to\widetilde{X}\to X$ is the unit sphere bundle of some vector bundle $V'\to X$; thus, the fiber bundle $S^{4n-1}=S^{2n-1}\star S^{2n-1}\to\widetilde{X}\star_X\widetilde{X}\xrightarrow{p\star_Xp}X$ is the unit sphere bundle of the vector bundle $V'\oplus V'\to X$. In particular, $e(\widetilde{X}\star_X\widetilde{X})=e(\widetilde{X})^2$. The proof of Lemma \ref{H(EE)} implies that $e(\widetilde{X})$ is divisible by $2$. Then $e(\widetilde{X}\star_X\widetilde{X})$ is also divisible by $2$, hence the mod $2$ cohomology Serre spectral sequence collapses at the $E_2$-term. Thus the statement follows from Lemma \ref{H(EE)}.
\end{proof}

Consider the homotopy commutative diagram
\[
  \xymatrix{
    \widetilde{X}\ar@{=}[d]&\ast\ar[l]\ar[r]\ar[d]&\widetilde{X}\ar@{=}[d]\\
    \widetilde{X}\ar[d]&\widetilde{X}\times_X\widetilde{X}\ar[l]_{p_1}\ar[r]^{p_2}\ar@{=}[d]&\widetilde{X}\ar[d]\\
    \ast&\widetilde{X}\times_X\widetilde{X}\ar[l]\ar[r]&\ast
  }
\]
such that all columns are cofibrations, where $p_i$ denotes the $i$-th projection. Then by taking the homotopy pushouts of rows, we get a cofibration
\[
  \widetilde{X}\vee\widetilde{X}\xrightarrow{f}\widetilde{X}\star_X\widetilde{X}\xrightarrow{g}\Sigma(\widetilde{X}\times_X\widetilde{X}).
\]

\begin{lemma}
  \label{f}
  The map $g^*\colon H^*(\Sigma(\widetilde{X}\times_X\widetilde{X}))\to H^*(\widetilde{X}\star_X\widetilde{X})$ is surjective for $*=4n-1$ and trivial for $*=4n+1$.
\end{lemma}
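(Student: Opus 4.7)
The plan is to read off both statements from the long exact cohomology sequence of the given cofibration:
\[
\cdots\to H^{k-1}(\widetilde{X}\vee\widetilde{X})\to H^k(\Sigma(\widetilde{X}\times_X\widetilde{X}))\xrightarrow{g^*}H^k(\widetilde{X}\star_X\widetilde{X})\xrightarrow{f^*}H^k(\widetilde{X}\vee\widetilde{X})\to\cdots.
\]
By exactness, $g^*$ is surjective in degree $4n-1$ if and only if $f^*$ vanishes there, and $g^*$ is trivial in degree $4n+1$ if and only if $f^*$ is injective there. I therefore reduce the whole statement to computing $f^*=(s_1^*,s_2^*)$ in these two degrees, using Lemmas \ref{H(EE)} and \ref{E*E}.

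The degree $4n+1$ case is routine: both $H^{4n+1}(\widetilde{X}\star_X\widetilde{X})$ and $H^{4n+1}(\widetilde{X})$ are generated by $x_{2n}x_{2n+1}$, and since $x_{2n}$ and $x_{2n+1}$ are pulled back from $X$ in both cohomologies while $s_i$ covers the identity on $X$, we get $s_i^*(x_{2n}x_{2n+1})=x_{2n}x_{2n+1}\neq 0$. Hence $f^*(x_{2n}x_{2n+1})=(x_{2n}x_{2n+1},x_{2n}x_{2n+1})\neq 0$, so $f^*$ is injective in this degree.

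The degree $4n-1$ case is the main point. By Lemmas \ref{H(EE)} and \ref{E*E}, $H^{4n-1}(\widetilde{X}\star_X\widetilde{X})=\mathbb{F}_2\langle x_{4n-1}\rangle$ and $H^{4n-1}(\widetilde{X})=\mathbb{F}_2\langle x_{2n-1}x_{2n}\rangle$, so $s_i^*(x_{4n-1})=a_i\,x_{2n-1}x_{2n}$ for some $a_i\in\mathbb{F}_2$; I must show $a_1=a_2=0$. To get a constraint I exploit that $\widetilde{X}\star_X\widetilde{X}$ is the homotopy pushout of $p_1,p_2\colon\widetilde{X}\times_X\widetilde{X}\to\widetilde{X}$, which yields $s_1\circ p_1\simeq s_2\circ p_2$ and hence $p_1^*\circ s_1^*=p_2^*\circ s_2^*$ on cohomology. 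To evaluate this relation, compute $H^*(\widetilde{X}\times_X\widetilde{X})$ via the Serre spectral sequence of $p_1$: this $S^{2n-1}$-bundle is the pullback of $p$ along itself, so its mod-$2$ Euler class is $p^*e(V')$, and Lemma \ref{H(EE)} forces $e(V')=0\pmod 2$ (since $x_{2n-1}$ must survive the spectral sequence for $\widetilde{X}\to X$). Hence the spectral sequence for $p_1$ also collapses, giving $H^*(\widetilde{X}\times_X\widetilde{X})\cong\Lambda(x_{2n-1},x_{2n},x_{2n+1},y_{2n-1})$ with $x_{2n-1}=p_1^*(x_{2n-1})$, $y_{2n-1}=p_2^*(x_{2n-1})$, and $p_1^*(x_{2n})=p_2^*(x_{2n})=x_{2n}$. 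The compatibility then reads $a_1\,x_{2n-1}x_{2n}=a_2\,y_{2n-1}x_{2n}$ in $H^{4n-1}(\widetilde{X}\times_X\widetilde{X})$, and the linear independence of these two basis elements forces $a_1=a_2=0$.

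The main technical obstacle is this detour through the pullback bundle: one must verify $e(V')=0\pmod 2$ and set up the Serre spectral sequence of $\widetilde{X}\times_X\widetilde{X}\to\widetilde{X}$ carefully enough to identify $p_1^*(x_{2n-1})$ and $p_2^*(x_{2n-1})$ as independent generators. Everything else reduces to formal exactness and degree bookkeeping in exterior algebras.
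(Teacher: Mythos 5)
Your proof is correct and follows essentially the same route as the paper: both read the lemma off the long exact sequence of the cofibration $\widetilde{X}\vee\widetilde{X}\xrightarrow{f}\widetilde{X}\star_X\widetilde{X}\xrightarrow{g}\Sigma(\widetilde{X}\times_X\widetilde{X})$, and handle degree $4n+1$ by factoring $f$ through the fold map $X\vee X\to X$. In degree $4n-1$ the paper notes directly that the connecting map is the injective map $(p_1^*,p_2^*)$ (hence $f^*=0$), whereas you deduce $s_1^*=s_2^*=0$ from the pushout relation $p_1^*s_1^*=p_2^*s_2^*$; both hinge on the same Serre spectral sequence fact that $p_1^*x_{2n-1}$ and $p_2^*x_{2n-1}$ are independent in $H^*(\widetilde{X}\times_X\widetilde{X})$, which you spell out more explicitly than the paper does.
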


\begin{proof}
  Consider the long exact sequence
  \[
    \cdots\to H^*(\Sigma(\widetilde{X}\times_X\widetilde{X}))\xrightarrow{g^*}H^*(\widetilde{X}\star_X\widetilde{X})\xrightarrow{f^*}H^*(\widetilde{X}\vee\widetilde{X})\to\cdots.
  \]
  By Lemma \ref{H(EE)}, $p_1$ and $p_2$ induce an isomorphism
  \[
  H^{4n-1}(\widetilde{X})\oplus H^{4n-1}(\widetilde{X})\cong H^{4n-1}(\widetilde{X}\times_X\widetilde{X}).
  \]
  Then $f^*$ is trivial for $*=4n-1$ and thus the $*=4n-1$ case is proved. By the definition of the map $f$, there is a homotopy commutative diagram
  \[
    \xymatrix{
      \widetilde{X}\vee\widetilde{X}\ar[r]^f\ar[d]^{p\vee p}&\widetilde{X}\star_X\widetilde{X}\ar[d]^{p\star p}\\
      X\vee X\ar[r]&X
    }
  \]
  where the bottom arrow is the folding map. Then by Lemmas \ref{H(E)} and \ref{H(EE)}, the map $f$ is injective in $H^{4n+1}$. Thus by the above long exact sequence, the $*=4n+1$ case is proved as well.
\end{proof}

\begin{proposition}
  \label{upper bound even}
  If $n\ge 2$, then $\mathsf{secat}(p\colon\widetilde{X}\to X)\le 1$.
\end{proposition}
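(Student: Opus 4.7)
The plan is to apply Proposition \ref{secat join} to translate $\mathsf{secat}(p)\le 1$ into the existence of a section of the fiberwise join map $p \star_X p\colon \widetilde{X} \star_X \widetilde{X} \to X$, identify this map as the unit sphere bundle of the rank-$4n$ oriented real vector bundle $V' \oplus V'$ (where $V' \subset p_X^* V$ is the orthogonal complement of the tautological line $L$ on $X$), and then attack the existence of such a section by classical obstruction theory for an $S^{4n-1}$-bundle.

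The setup is controlled by the following cohomological facts. By Lemma \ref{H(E)} together with the integral Gysin sequence for $S^{2n} \to X \to S^{2n+1}$, $X$ is simply connected with cells only in dimensions $0, 2n, 2n+1, 4n+1$, and for $n\ge 2$ one has $H^{4n-1}(X;\mathbb{Z}) = H^{4n}(X;\mathbb{Z}) = 0$. Since $\pi_k(S^{4n-1}) = 0$ for $k < 4n-1$, a section of $p \star_X p$ exists on $X^{(4n-1)} = X^{(2n+1)}$ with no obstruction. The primary (Euler-class) obstruction to extending over cells of dimension $4n$ lies in $H^{4n}(X;\mathbb{Z}) = 0$, and the indeterminacy in the next obstruction lies in $H^{4n-1}(X;\mathbb{Z}) = 0$, so the only remaining obstacle is a single well-defined class
\[
\omega \in H^{4n+1}(X;\pi_{4n}(S^{4n-1})) \cong H^{4n+1}(X;\mathbb{Z}/2) \cong \mathbb{Z}/2.
\]

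The crux is to show $\omega = 0$. I would first verify that $w(V) = 1$: for $0 < i < 2n+1$, $w_i(V) \in H^i(S^{2n+1};\mathbb{Z}/2) = 0$, while $w_{2n+1}(V) = e(V) \bmod 2 = 0$ because $V$ is oriented of odd rank on $S^{2n+1}$, so $2e(V) = 0$ in the torsion-free group $H^{2n+1}(S^{2n+1};\mathbb{Z})$ forces $e(V) = 0$. Since $L$ is trivial (as $w_1(L) \in H^1(X;\mathbb{Z}/2) = 0$), the identity $V' \oplus L = p_X^* V$ and the Whitney sum formula yield $w(V' \oplus V') = 1$. By the classical Massey--Thomas description of the secondary obstruction to a section of a rank-$r$ oriented bundle whose Euler class vanishes, $\omega$ is expressible (up to vanishing indeterminacy) as $Sq^2 w_{r-1}(V' \oplus V')$ with $r = 4n$, and the vanishing of $w_{4n-1}(V' \oplus V')$ forces $\omega = 0$.

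The main obstacle is the rigorous identification of $\omega$ with a characteristic-class expression. An alternative route is to exploit the cofiber sequence $\widetilde{X} \vee \widetilde{X} \to \widetilde{X} \star_X \widetilde{X} \to \Sigma(\widetilde{X} \times_X \widetilde{X})$ together with the triviality of $g^*$ in degree $4n+1$ from Lemma \ref{f}: one interprets $\omega$ as obstructing a lift through $f$ of the restriction of a partial section to the attaching map of the top cell of $X$, and Lemma \ref{f} then forces this lift to exist through the cofiber. Either route reduces the remaining work to cohomological data already established.
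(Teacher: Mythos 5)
Your setup is correct: by Proposition~\ref{secat join} the claim reduces to finding a section of $p\star_X p\colon\widetilde{X}\star_X\widetilde{X}\to X$, and since $X$ has cells only in dimensions $0,2n,2n+1,4n+1$ (with $4n-1>2n+1$ for $n\ge 2$), the only possible obstruction lies in $H^{4n+1}(X;\pi_{4n}(S^{4n-1}))\cong\Z/2$. You have also correctly guessed that Lemma~\ref{f} should enter. But neither of your two routes to killing that obstruction is actually carried out, and both have concrete defects.

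Route (a), the ``Massey--Thomas'' formula $\omega=\mathrm{Sq}^2 w_{4n-1}(V'\oplus V')$, is not a theorem in the form you state it, and you acknowledge yourself that its ``rigorous identification'' is the main obstacle. It is also suspicious on its face: because $H^{4n-1}(X;\Z/2)=0$ purely by degree reasons, that formula would make the secondary obstruction vanish for \emph{every} rank-$4n$ oriented bundle over $X$, with no input at all from $V$, and would make the whole apparatus of Lemmas~\ref{H(E)}--\ref{f} pointless. In fact the secondary obstruction to a section of a sphere bundle is not a polynomial in the Stiefel--Whitney classes of the bundle; it genuinely depends on data beyond them, and the indeterminacy/lift-dependence cannot be packaged into a characteristic class in the way you propose.

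Route (b) is closer in spirit to what the paper does, but the mechanism you sketch (``obstructing a lift through $f$ of the restriction of a partial section'') is not the right one, and you only invoke half of Lemma~\ref{f}. The paper's argument works on the \emph{total space} $\widetilde{X}\star_X\widetilde{X}$ rather than on $X$. The ingredient missing from your proposal is \v{S}varc's relation \cite[Ch.\ III, \S 4]{Sv}: for a sphere bundle $S^{4n-1}\to E\to B$ with $n\ge 2$, the pullback to $E$ of the secondary obstruction $\mathfrak{o}$ equals $\mathrm{Sq}^2 y$ for a class $y\in H^{4n-1}(E;\Z/2)$ restricting to the fiber generator. Lemma~\ref{E*E} supplies such a $y$ and also shows $(p\star_X p)^*(\mathfrak{o})=x_{2n}x_{2n+1}\ne 0$ whenever $\mathfrak{o}\ne 0$. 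Lemma~\ref{f} is then used in \emph{both} degrees: surjectivity of $g^*$ in degree $4n-1$ lets one write $y=g^*(z)$, and triviality of $g^*$ in degree $4n+1$ then gives $\mathrm{Sq}^2 y = g^*(\mathrm{Sq}^2 z)=0$, a contradiction. Without the \v{S}varc relation and the degree-$(4n-1)$ surjectivity there is no bridge from $g^*$ to the obstruction, so your route (b) as written does not close the gap.
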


\begin{proof}
  By Proposition \ref{secat join}, it is sufficient to show that $p\star_Xp\colon\widetilde{X}\star_X\widetilde{X}\to X$ has a right homotopy inverse. Clearly, the only obstruction $\mathfrak{o}$ for the existence of such a right homotopy inverse lies in
  \[
    H^{4n+1}(X;\pi_{4n}(S^{4n-1}))\cong H^{4n+1}(X).
  \]
  Suppose $\mathfrak{o}\ne 0$. Then by Lemma \ref{H(E)}, $\mathfrak{o}=x_{2n}x_{2n+1}$, and by Lemma \ref{E*E}, $(p\star_Xp)^*(\mathfrak{o})=x_{2n}x_{2n+1}\ne 0$. On the other hand, since $n\ge 2$, there is $y\in H^{4n-1}(\widetilde{X}\star_X\widetilde{X})$ such that $\mathrm{Sq}^2y=(p\star_Xp)^*(\mathfrak{o})$ by \cite[Chapter III, \S 4]{Sv}, and by Lemma \ref{f}, there is $z\in H^{4n-1}(\Sigma(\widetilde{X}\times_X\widetilde{X}))$ such that $g^*(z)=y$. Then by Lemma \ref{f}, we get
  \[
    0\ne (p\star_Xp)^*(\mathfrak{o})=\mathrm{Sq}^2y=\mathrm{Sq}^2g^*(z)=g^*(\mathrm{Sq}^2z)=0
  \]
  which is a contradiction. Thus $\mathfrak{o}=0$, as desired.
\end{proof}

\begin{proof}
  [Proof of Theorem \ref{main 0} (1)]
  For $n\ge2$, the statement is proved by combining \eqref{lower bound},  Propositions \ref{upper bound FW} and \ref{upper bound even}.
  For $n=1$, all vector bundles over $S^3$ of rank $3$ are trivial as $\pi_3(BO(3)) = 0$. Thus by \cite[Example 4.2]{CFW}, we obtain $\TC[X\to S^3]=\TC(S^2)=2$.
\end{proof}

%%%%% Section 6 %%%%%

\section{Proofs of Theorem \ref{main 0} (2) and Theorem \ref{main3}}\label{Proofs}

\subsection{Proof of Theorem \ref{main 0} (2)}

Throughout this subsection, we consider the homotopy fibrations
\[
S^{2n+1}\to X\to S^{2n+2}\quad\text{with}\quad n\neq1,3.
\]
Namely, we exclude the cases in which the fiber is $S^3$ or $S^7$. For $3\le k<l$, let $f\colon S^l \to S^k$ be a map of order $b<\infty$ and let $Z$ denote the cofiber of $f$. By the Blakers-Massey theorem, the $(k+l-1)$-skeleton of the homotopy fiber of the pinch map $\rho\colon Z \to S^{l+1}$ is homotopy equivalent to $S^k$, so there is an exact sequence
\[
  \pi_{l+1}(S^k) \to\pi_{l+1}(Z) \xrightarrow{\rho_*} \pi_{l+1}(S^{l+1}) \xrightarrow{\delta} \pi_{l}(S^k).
\]
Clearly, the composite
\[
  \pi_l(S^l)\xrightarrow[\cong]{E}\pi_{l+1}(S^{l+1})\xrightarrow{\delta} \pi_{l}(S^k)
\]
represents the induced map $f_*$, where $E$ denotes the suspension map. Then we obtain:

\begin{lemma}
  \label{ker delta}
  $\pi_{l+1}(S^{l+1})/\ker\delta\cong\Z/b$.
\end{lemma}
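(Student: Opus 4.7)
The plan is to use the relation between $\delta$ and $f_*$ provided in the excerpt, combined with the fact that the suspension $E\colon\pi_l(S^l)\to\pi_{l+1}(S^{l+1})$ is an isomorphism, to identify $\mathrm{im}(\delta)$ with the cyclic subgroup generated by $f$.

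First I would observe that since $l>k\ge 3$ we have $l\ge 4$, and therefore by the Freudenthal suspension theorem the suspension map $E\colon\pi_l(S^l)\to\pi_{l+1}(S^{l+1})$ is an isomorphism of infinite cyclic groups (each is $\Z$ generated by the identity class). In particular $E$ is surjective, which gives the equality of images
\[
\mathrm{im}(\delta)=\mathrm{im}(\delta\circ E)\subseteq\pi_l(S^k).
\]

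Next I would invoke the identification stated in the excerpt, namely that the composite $\delta\circ E$ agrees with the induced map $f_*\colon\pi_l(S^l)\to\pi_l(S^k)$. Since $\pi_l(S^l)=\Z\cdot 1_{S^l}$ and $f_*(1_{S^l})=f$, the image of $f_*$ is the cyclic subgroup $\langle f\rangle\subseteq\pi_l(S^k)$, which by the order hypothesis is isomorphic to $\Z/b$. Combining these steps with the first isomorphism theorem applied to $\delta$ yields
\[
\pi_{l+1}(S^{l+1})/\ker\delta\;\cong\;\mathrm{im}(\delta)\;=\;\mathrm{im}(f_*)\;=\;\langle f\rangle\;\cong\;\Z/b,
\]
as claimed.

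This proof is essentially a diagram chase, so there is no genuine obstacle; the only point to be careful about is invoking Freudenthal in the correct range to ensure that $E$ is an isomorphism (so that $\mathrm{im}(\delta)=\mathrm{im}(\delta\circ E)$), which is immediate from $l\ge 4$. No further input beyond the exact sequence and the identification of $\delta\circ E$ with $f_*$ is required.
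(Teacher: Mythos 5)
Your proposal is correct and is exactly the argument the paper intends: the paper states the exact sequence and the identification $\delta\circ E = f_*$, then presents the lemma as an immediate consequence, which is precisely the first-isomorphism-theorem chase you spell out. (One minor remark: $E\colon\pi_l(S^l)\to\pi_{l+1}(S^{l+1})$ is an isomorphism for all $l\ge 1$ simply because both groups are $\Z$ generated by the identity classes and suspension preserves degree, so the Freudenthal range condition is not really needed here.)
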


As in the proof of Corollary \ref{W skeleton suspension}, the Whitehead product $-[1_{S^{2n+1}},1_{S^{2n+1}}]\in\pi_{4n+1}(S^{2n+1})$ is a suspension. Here we consider the order of its desuspension.

\begin{lemma}
  \label{bar w}
  The set $\{\bar{w}\in\pi_{4n}(S^{2n})\mid \Sigma\bar{w}=-[1_{S^{2n+1}},1_{S^{2n+1}}]\}$ has exactly two elements, and each of its element has order
  \[
    \begin{cases}
      2&(n \equiv 0 \pmod 2)\\
      4&(n \equiv 1 \pmod 2).
    \end{cases}
  \]
\end{lemma}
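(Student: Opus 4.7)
The plan is to apply the EHP exact sequence to handle both the cardinality and the order of $\bar w$, with a parity-based case split for the order.

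First, to show that the set of desuspensions has exactly two elements, I would use the EHP exact sequence
\[
\pi_{4n+2}(S^{4n+1})\xrightarrow{P}\pi_{4n}(S^{2n})\xrightarrow{\Sigma}\pi_{4n+1}(S^{2n+1})\xrightarrow{H}\pi_{4n+1}(S^{4n+1}).
\]
The classical identity $H([1_{S^{m}},1_{S^{m}}])=(1+(-1)^{m})1_{S^{2m-1}}$ gives $H([1_{S^{2n+1}},1_{S^{2n+1}}])=0$, so $-[1_{S^{2n+1}},1_{S^{2n+1}}]$ lies in $\mathrm{Im}\,\Sigma$. The set of desuspensions is then a coset of $\ker\Sigma=\mathrm{Im}\,P$, and since $\pi_{4n+2}(S^{4n+1})=\Z/2\langle\eta\rangle$, it suffices to show $P(\eta)\neq 0$. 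I would identify $P(\eta)=[1_{S^{2n}},1_{S^{2n}}]\circ\eta_{4n-1}$ via the standard formula $P(\Sigma^{2}\alpha)=[1_{S^{m}},1_{S^{m}}]\circ\alpha$ for double suspensions (applicable here since $\eta$ is stable), and verify nonvanishing of this Whitehead--$\eta$ composite under the standing hypothesis $n\neq 1,3$, e.g.\ by invoking known entries of Toda's tables.

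For the order, since $[1_{S^{2n+1}},1_{S^{2n+1}}]$ has order dividing $2$ by anticommutativity in odd degree, we have $\Sigma(2\bar w)=0$, so $2\bar w\in\ker\Sigma$. Thus $2\bar w$ is either $0$ or $P(\eta)$, giving each $\bar w$ order $2$ or $4$ respectively. As a preliminary reduction, $\bar w$ itself desuspends once more: the second James--Hopf invariant satisfies $\Sigma H(\bar w)=H(\Sigma\bar w)=H(-[1_{S^{2n+1}},1_{S^{2n+1}}])=0$, and $\Sigma$ is an isomorphism on the relevant groups by Freudenthal, so $\bar w=\Sigma\bar w_{0}$ for some $\bar w_{0}\in\pi_{4n-1}(S^{2n-1})$.

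The main obstacle is establishing that $2\bar w=0$ precisely when $n$ is even. I would attack this by tracing $\bar w_{0}$ through the next stage of the EHP sequence and comparing its order with the $2$-primary structure of $\pi_{4n-1}(S^{2n-1})$, whose parity dependence reflects the behavior of the $J$-homomorphism image. Alternatively, a direct computation via Hilton's distributivity formula expanding $2\bar w=\bar w\circ(2\cdot 1_{S^{4n}})$ modulo a Whitehead-correction term involving $[\bar w,\bar w]$ and the second James--Hopf invariant of $1_{S^{4n}}$ should yield the desired distinction, namely $0$ for $n$ even and $P(\eta)$ for $n$ odd. Pinning down this parity-sensitive identification is the principal technical hurdle.
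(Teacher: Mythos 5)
Your handling of the first assertion (that the set of desuspensions of $-[1_{S^{2n+1}},1_{S^{2n+1}}]$ has exactly two elements) agrees with the paper: both you and the paper run the EHP sequence $\pi_{4n+2}(S^{4n+1})\xrightarrow{P}\pi_{4n}(S^{2n})\xrightarrow{E}\pi_{4n+1}(S^{2n+1})\xrightarrow{H}\pi_{4n+1}(S^{4n+1})$, note that $H$ vanishes on the Whitehead square so the class is a suspension, and identify the coset as $\ker E=\mathrm{Im}\,P\cong\Z/2$ once one checks $P(\eta)\neq 0$; the paper cites Hilton--Whitehead for $P(\eta)=[\eta,1_{S^{2n}}]\neq 0$, which is the same element as your $[1_{S^{2n}},1_{S^{2n}}]\circ\eta_{4n-1}$.

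For the order, however, there is a genuine gap. You correctly observe that $2\bar w\in\ker E=\{0,P(\eta)\}$, so the order is $2$ or $4$, but you never decide which case occurs for a given parity of $n$ — you write ``Pinning down this parity-sensitive identification is the principal technical hurdle'' and leave it as two unexecuted sketches. That decision is precisely the content of the lemma, and nothing you have written forces it. The paper carries this out concretely: for $n\equiv 0\pmod 2$ it invokes Thomeier \cite[2.24]{Th} to exhibit an order-$2$ element $a\in\pi_{4n}(S^{2n})$ with $Ea=[1_{S^{2n+1}},1_{S^{2n+1}}]$ (so that both elements of the coset have order $2$); for $n\equiv 1\pmod 2$ it descends through two further EHP stages to use Thomeier's computation $\ker\{E^3\colon\pi_{4n}(S^{2n})\to\pi_{4n+3}(S^{2n+3})\}\cong\Z/4$ (\cite[\S 3]{Th}), producing an order-$4$ element mapping to the Whitehead square. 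Without some such input from the explicit structure of these homotopy groups, your scheme does not close.

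A secondary problem: your claimed reduction ``$\bar w=\Sigma\bar w_0$'' rests on the equation $\Sigma H(\bar w)=H(\Sigma\bar w)$, which does not even typecheck. Here $H(\bar w)\in\pi_{4n}(S^{4n-1})$, so $\Sigma H(\bar w)\in\pi_{4n+1}(S^{4n})$, whereas $H(\Sigma\bar w)\in\pi_{4n+1}(S^{4n+1})$; these live in different groups, and the James--Hopf invariant does not commute with suspension in this naive way. The paper never needs $\bar w$ to desuspend and avoids this issue.
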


\begin{proof}
  Let $A$ denote the set in the statement. Consider the EHP sequence
  \begin{equation}
    \label{EHP 1}
    \Z/2\cong\pi_{4n+2}(S^{4n+1}) \xrightarrow{P} \pi_{4n}(S^{2n}) \xrightarrow{E} \pi_{4n+1}(S^{2n+1}) \xrightarrow{H} \pi_{4n+1}(S^{4n+1})\cong\Z.
  \end{equation}
  Then by \cite[Corollary 4.18]{HW} , $P(\eta) = [\eta, 1_{S^{2n}}]\ne 0$, and thus the map $P$ is injective. Thus $\mathrm{Ker}\,E = \mathrm{Im}\,P \cong \Z/2$, which implies that $|A|=2$. Take any $a\in A$. Then $A=\{a,a+P(\eta)\}$. If $a$ has order $2^k$ for $k\ge 1$, then as $2P(\eta)=0$, $a+P(\eta)$ also has order $2^k$. Thus it is sufficient to find an element of $A$ having order $2$ for $n \equiv 0 \pmod 2$ and $4$ for $n \equiv 1 \pmod 2$. If $n \equiv 0 \pmod 2$, then by \cite[2.24]{Th}, there is $a\in\pi_{4n}(S^{2n})$ of order $2$ such that
  \[
  Ea=[1_{S^{2n-1}},1_{S^{2n-1}}]=-[1_{S^{2n-1}},1_{S^{2n-1}}].
  \]
  Then we have $a\in A$ and thus the first case is proved.

  Consider the EHP sequence
  \[
    \Z\cong\pi_{4n+3}(S^{4n+3}) \xrightarrow{P} \pi_{4n+1}(S^{2n+1}) \xrightarrow{E} \pi_{4n+2}(S^{2n+2}) \xrightarrow{H} \pi_{4n+2}(S^{4n+3})=0.
  \]
  Then $\mathrm{Ker}\,E=\mathrm{Im}\,P=\Z/2\{[1_{S^{2n+1}},1_{S^{2n+1}}]\}$, where $[1_{S^{2n+1}},1_{S^{2n+1}}]$ has order $2$ as $n\ne 1,3$. By the Freudenthal suspension theorem, $\mathrm{Ker}\,E\cong\mathrm{Ker}\{E^2\colon\pi_{4n+1}(S^{2n+1})\to\pi_{4n+3}(S^{2n+3})\}$. In the EHP sequence \eqref{EHP 1}, the map $H$ is trivial as $\pi_{4n+1}(S^{2n+1})$ is a torsion group; therefore, the map $E$ is surjective. Then the natural map
  \[
  \mathrm{Ker}\{E^3\colon\pi_{4n}(S^{2n})\to\pi_{4n+3}(S^{2n+3})\}\to\mathrm{Ker}\{E^2\colon\pi_{4n+1}(S^{2n+1})\to\pi_{4n+3}(S^{2n+3})\}
  \]
  is surjective too. If $n \equiv 1 \pmod 2$, then  $\mathrm{Ker}\{E^3\colon\pi_{4n}(S^{2n})\to\pi_{4n+3}(S^{2n+3})\}\cong\Z/4$ by \cite[\S 3]{Th}. Thus there is $a\in\pi_{4n}(S^{2n})$ of order $4$ such that $Ea=[1_{S^{2n+1}},1_{S^{2n+1}}]$. Namely, $A$ has an element of order $4$.
\end{proof}

We choose $\bar{w}\in\pi_{4n}(S^{2n})$ such that $\Sigma\bar{w}=-[1_{S^{2n+1}},1_{S^{2n+1}}]$. Let $\overline{W} = S^{2n}\cup_{\bar{w}} e^{4n+1}$. Then by Proposition \ref{W skeleton}, $\Sigma\overline{W}=W(0)_{4n+2}$.

\begin{proposition}
  \label{coker}
  There is an isomorphism
  \[
  \mathrm{Coker} \{E\colon \pi_{4n+1}(\overline{W}) \to \pi_{4n+2}(W(0)_{4n+2})\}
  \cong
  \begin{cases}
    \Z/2 & (n \equiv 1 \pmod 2) \\
    0 & (n \equiv 0 \pmod 2).
  \end{cases}
  \]
\end{proposition}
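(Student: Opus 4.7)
The plan is to derive, from the long exact sequences of the pairs $(\overline{W}, S^{2n})$ and $(W(0)_{4n+2}, S^{2n+1})$, two compatible short exact sequences whose comparison via the snake lemma yields the desired cokernel.

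First, I apply the Blakers--Massey theorem to the pair $(\overline{W}, S^{2n})$: since $S^{2n}$ is $(2n-1)$-connected and $(\overline{W}, S^{2n})$ is $4n$-connected (one relative cell in dimension $4n+1$), the natural map $\pi_k(\overline{W}, S^{2n}) \to \pi_k(S^{4n+1})$ is an isomorphism for $k\le 6n-1$, which covers $k=4n+1,4n+2$ for $n\ge 2$. Factoring the connecting homomorphism $\partial$ through the characteristic map $(D^{4n+1},S^{4n})\to(\overline{W},S^{2n})$ and the Freudenthal isomorphism $\pi_{4n+1}(S^{4n})\cong\pi_{4n+2}(S^{4n+1})$, I identify $\partial(\eta)=\bar{w}\circ\eta$. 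The long exact sequence then collapses to
\[
  0 \to \pi_{4n+1}(S^{2n})/\langle \bar{w}\circ\eta\rangle \to \pi_{4n+1}(\overline{W}) \to b\mathbb{Z} \to 0,
\]
where the image $b\mathbb{Z}$ is $\ker(\pi_{4n+1}(S^{4n+1})\to\pi_{4n}(S^{2n}),\ 1\mapsto\bar{w})$, with $b=|\bar{w}|$ equal to $2$ or $4$ according to Lemma~\ref{bar w}. The same argument applied to $(W(0)_{4n+2},S^{2n+1})$, together with the fact that $u:=-[1_{S^{2n+1}},1_{S^{2n+1}}]$ has order exactly $2$ for $n\ne 1,3$ (established in the proof of Lemma~\ref{bar w}), yields
\[
  0 \to \pi_{4n+2}(S^{2n+1})/\langle u\circ\eta\rangle \to \pi_{4n+2}(W(0)_{4n+2}) \to 2\mathbb{Z} \to 0.
\]

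Next, since $W(0)_{4n+2}=\Sigma\overline{W}$ and $u=\Sigma\bar{w}$, naturality of the Puppe sequence under suspension produces a commutative ladder between the two short exact sequences, with the suspension map $E$ as each vertical. I then apply the snake lemma. The right-hand vertical is the inclusion $b\mathbb{Z}\hookrightarrow 2\mathbb{Z}$, whose cokernel $2\mathbb{Z}/b\mathbb{Z}$ is $0$ for $n$ even ($b=2$) and $\mathbb{Z}/2$ for $n$ odd ($b=4$). For the left-hand vertical, the EHP sequence
\[
  \pi_{4n+1}(S^{2n}) \xrightarrow{E} \pi_{4n+2}(S^{2n+1}) \xrightarrow{H} \pi_{4n+2}(S^{4n+1}) \xrightarrow{P} \pi_{4n}(S^{2n})
\]
shows $\mathrm{Im}(H)=\ker P=0$, because $P(\eta)=[\eta,1_{S^{2n}}]\ne 0$ (already invoked in the proof of Lemma~\ref{bar w} via \cite[Corollary~4.18]{HW}); hence $E\colon\pi_{4n+1}(S^{2n})\to\pi_{4n+2}(S^{2n+1})$ is surjective, and so is its descent to the quotients by $\langle\bar{w}\circ\eta\rangle$ and $\langle u\circ\eta\rangle$. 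The snake lemma therefore gives $\mathrm{coker}(E\colon\pi_{4n+1}(\overline{W})\to\pi_{4n+2}(W(0)_{4n+2}))\cong 2\mathbb{Z}/b\mathbb{Z}$, which is exactly the claim.

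The main obstacle is the precise identification of the connecting homomorphism $\partial$: on the two sides of $\partial(\eta)=\bar{w}\circ\eta$ the element $\eta$ lives in different unstable groups, related by a Freudenthal isomorphism, and one must thread these identifications through the Blakers--Massey isomorphism for the pair and check compatibility with the suspension ladder (so that $E(\bar{w}\circ\eta)=u\circ\eta$). Once this bookkeeping is in place, the snake-lemma argument is routine.
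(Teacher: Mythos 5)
Your proof is correct and follows essentially the same route as the paper's: apply Blakers--Massey to compare the long exact sequences of $(\overline{W},S^{2n})$ and $(W(0)_{4n+2},S^{2n+1})$, split into a ladder of short exact sequences, and run the snake lemma, with surjectivity of the left vertical from the EHP sequence ($P(\eta)=[\eta,1_{S^{2n}}]\ne0$) and the right vertical being the inclusion $b\Z\hookrightarrow 2\Z$ read off from the orders of the attaching maps via Lemma~\ref{ker delta} and Lemma~\ref{bar w}. Your explicit identification of the image of $\bar\imath_*$ as $\pi_{4n+1}(S^{2n})/\langle\bar w\circ\eta\rangle$ is a fine extra unpacking but is not needed (and quietly uses Blakers--Massey one degree higher), since the snake-lemma argument only requires surjectivity of $E$ on the source groups.
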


\begin{proof}
  By the Blakers-Massey Theorem, there is a commutative diagram with exact rows
  \[
    \xymatrix{
      \pi_{4n+1}(S^{2n})\ar[r]^{\bar{i}_*}\ar[d]^E&\pi_{4t+1}(\overline{W})\ar[r]\ar[d]^E&\pi_{4n+1}(S^{4n+1})\ar[r]^{\bar{\delta}}\ar[d]^E&\pi_{4n}(S^{2n})\ar[d]^E\\
      \pi_{4n+2}(S^{2n+1})\ar[r]^{i_*}&\pi_{4n+2}(W(0)_{4n+2})\ar[r]&\pi_{4n+2}(S^{4n+2})\ar[r]^\delta&\pi_{4n+1}(S^{2n+1})
    }
  \]
  where $i$ and $\bar{i}$ are the bottom cell inclusions. Then we get a commutative diagram of exact rows
  \[
    \xymatrix{
      0\ar[r]\ar@{=}[d]&\mathrm{Im}\,\bar{i}_*\ar[r]\ar[d]^f&\pi_{4n+1}(\overline{W})\ar[r]\ar[d]^E&\mathrm{Ker}\,\bar{\delta}\ar[r]\ar[d]^g&0\ar@{=}[d]\\
      0\ar[r]&\mathrm{Im}\,i_*\ar[r]&\pi_{4n+2}(W(0)_{4n+2})\ar[r]&\mathrm{Ker}\,\delta\ar[r]&0.
    }
  \]
  Hence by the snake lemma, we get an exact sequence
  \[
    \mathrm{Coker}\,f\to\mathrm{Coker}\{E\colon \pi_{4n+1}(\overline{W})\to \pi_{4n+2}(W(0)_{4n+2})\}\to\mathrm{Coker}\,g\to 0.
  \]
  Consider the EHP sequence
  \[
  \pi_{4n+1}(S^{2n}) \xrightarrow{E} \pi_{4n+2}(S^{2n+1}) \xrightarrow{H} \pi_{4t+2}(S^{4n+1}) \xrightarrow{P} \pi_{4n}(S^{2n}).
  \]
  As in the proof of Lemma \ref{bar w}, the map $P$ is injective, and thus the map $E$ is surjective. Then $\mathrm{Coker}\,f=0$. By Lemmas \ref{ker delta} and \ref{bar w}, we have
  \[
  \mathrm{Ker}\,\bar{\delta}\cong
  \begin{cases}
    4\Z & (n \equiv 1 \pmod 2) \\
    2\Z & (n \equiv 0 \pmod 2)
  \end{cases}\quad \text{and} \quad \mathrm{Ker}\,\delta\cong 2\Z \subset \Z\cong\pi_{4n+2}(S^{4n+2}).
  \]
  Thus since $E\colon \pi_{4n+1}(S^{4n+1})\to\pi_{4n+2}(S^{4n+2})$ is an isomorphism, $\mathrm{Coker}\,g\cong\Z/2$ for $n\equiv 1\pmod 2$ and $\mathrm{Coker}\,g=0$ for $n\equiv 0\pmod 2$.
\end{proof}

We now consider a homotopy fibration $S^{4n-1}\to X\to S^{4n}$ with $n\ge3$.

\begin{proof}
  [Proof of Theorem \ref{main 0} (2)]
  Let $\chi(X) = 4l+2 \in \pi_{4n-1}(S^{4n-1}) \cong \Z$. Then by Theorem \ref{Sigma chi}, $\rho \circ \lambda(0) = \pm\Sigma^{4n-1}\chi(X) = \pm(4l+2)$. Consider the maps $E\colon\pi_{8n-3}(\overline{W})\to\pi_{8n-2}(W(0)_{8n-2})$ and $g\colon\mathrm{Ker}\,\bar{\delta}\to\mathrm{Ker}\,\delta$ in the proof of Proposition \ref{coker}. By Proposition \ref{coker}, $\rho \circ \lambda(0) = \pm(4l+2) \notin \mathrm{Im}\,g$, implying
  \[
    \lambda(0) \notin \mathrm{Im}\{E\colon\pi_{8n-3}(\overline{W})\to\pi_{8n-2}(W(0)_{8n-2})\}.
  \]
  Then by the EHP sequence
  \[
  \pi_{8n-3}(\overline{W}) \xrightarrow{E} \pi_{8n-2}(W(0)_{8n-2}) \xrightarrow{H} \pi_{8n-2}(\Sigma(\overline{W})^{\wedge2}) \xrightarrow{P} \pi_{8n-4}(\overline{W}),
  \]
  $H(\lambda(0))\ne 0$. Since the $(8n-1)$-skeleton of $\Sigma(\overline{W})^{\wedge2}$ is $S^{8n-3}$, we have
  \[
  \pi_{8n-2}(\Sigma(\overline{W})^{\wedge2})\cong\pi_{8n-2}(S^{8n-3})\cong \Z/2,
  \]
  and by the Freudenthal suspension theorem,
  \[
    \pi_{8n-2}(\Sigma(\overline{W})^{\wedge2}) \xrightarrow{E} \pi_{8n-1}((W(0)_{8n-2})^{\wedge2})
  \]
  is an isomorphism. Then by Theorem \ref{JH and crude}, we get
  \[
  \overline{H}_{\nabla}(\lambda(0)) = -\Sigma H(\lambda(0)) \neq 0.
  \]
  Thus as $\eta\circ\chi(X) = 0$, we obtain $\wcat(W(0)) = 2$ by Corollary \ref{sufficient condition}. Therefore by Proposition \ref{upper bound odd} and Corollary \ref{lower bound wcat}, $\TC[X\to S^{4n}] = 2$.
\end{proof}

As an application, we obtain:

\begin{corollary}
  \label{n equiv 1 lambda}
  For any homotopy fibration $S^{4n+1}\to X\to S^{4n+2}$ with $n\ge1$,
  \[
  \wcat(W(0)_{8n+3}) = 1.
  \]
\end{corollary}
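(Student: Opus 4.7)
The plan is to run the argument from the proof of Theorem \ref{main 0}(2) in reverse. The fibration $S^{4n+1}\to X\to S^{4n+2}$ fits the setup of Section \ref{Proofs} with the parameter ``$n$'' there replaced by $2n$, which is \emph{even}. Hence the relevant conclusion from Proposition \ref{coker} is that
\[
E\colon\pi_{8n+1}(\overline{W})\to\pi_{8n+2}(W(0)_{8n+2})
\]
is surjective. In particular, the attaching map $\lambda(0)\colon S^{8n+2}\to W(0)_{8n+2}$ of the top cell of $W(0)_{8n+3}$ is itself a suspension, $\lambda(0)=E(\bar{\lambda})$ for some $\bar{\lambda}\in\pi_{8n+1}(\overline{W})$.

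From the James exact sequence
\[
\pi_{8n+1}(\overline{W})\xrightarrow{E}\pi_{8n+2}(\Sigma\overline{W})\xrightarrow{H}\pi_{8n+2}(\Sigma\overline{W}^{\wedge 2}),
\]
I then conclude $H(\lambda(0))=0$, and Theorem \ref{JH and crude} gives
\[
\overline{H}_{\nabla}(\lambda(0))=-\Sigma H(\lambda(0))=0\in\pi_{8n+3}\bigl((W(0)_{8n+2})^{\wedge 2}\bigr).
\]

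Now by the factorization from the proof of Proposition \ref{Hopf inv wcat} (via \cite[Theorem 5.14]{BS}), the reduced diagonal
\[
\overline{\Delta}\colon W(0)_{8n+3}\to (W(0)_{8n+3})^{\wedge 2}
\]
is homotopic to the composite of the pinch map onto the top $(8n+3)$-cell with $\overline{H}_{\nabla}(\lambda(0))$ (followed by the inclusion $(W(0)_{8n+2})^{\wedge 2}\hookrightarrow(W(0)_{8n+3})^{\wedge 2}$). Since $\overline{H}_{\nabla}(\lambda(0))=0$, this composite is null-homotopic, and therefore $\wcat(W(0)_{8n+3})\le 1$. The matching lower bound $\wcat\ge 1$ is immediate because $W(0)_{8n+3}$ has nontrivial cohomology in degree $4n+1$ and so is not contractible.

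The main obstacle is purely bookkeeping: one has to confirm that, when one translates the indices of Section \ref{Proofs} to the present setting (the ``$n$'' of Proposition \ref{coker} becomes our $2n$), the dimensional hypotheses needed to apply Theorem \ref{JH and crude} and the factorization in Proposition \ref{Hopf inv wcat} still hold. Note in particular that, in contrast with Corollary \ref{sufficient condition} or Proposition \ref{H unique}, no hypothesis on $\eta\circ\rho\circ\lambda(0)$ is required here: the vanishing of $\overline{H}_{\nabla}(\lambda(0))$ alone already collapses the reduced diagonal, irrespective of which second structure map is used.
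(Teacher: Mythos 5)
Your proof is correct, and the key step is the same as the paper's: you correctly identify that with $n'=2n$ even, Proposition \ref{coker} gives $\mathrm{Coker}\,E = 0$, so $\lambda(0)$ is a suspension. Where you differ is in the final step: the paper simply observes that since $W(0)_{8n+2}=\Sigma\overline{W}$ and $\lambda(0)=E(\bar{\lambda})$, the complex $W(0)_{8n+3}=\Sigma(\overline{W}\cup_{\bar{\lambda}}e^{8n+2})$ is itself a suspension, hence a co-H-space, and so $\wcat\le 1$ at once; you instead pass through the James exact sequence ($H\circ E=0$), the Boardman--Steer identity $\Sigma H=-\overline{H}_\nabla$ from Theorem \ref{JH and crude}, and the factorization of the reduced diagonal from the proof of Proposition \ref{Hopf inv wcat}. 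Both roads are valid; yours is a longer detour through the Hopf-invariant machinery of Section \ref{Crude}, while the paper's observation that a space built from a suspension by attaching a cell along a suspended map is itself a suspension is more direct and avoids any invocation of structure maps or \cite[Theorem 5.14]{BS}. Your remark that no $\eta\circ\rho\circ\lambda(0)$ hypothesis is needed is also correct, since the reduced diagonal factors through $\overline{H}_\gamma$ for \emph{any} choice of second structure map $\gamma$, and $\nabla$ is one.
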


\begin{proof}
  By Proposition \ref{coker}, $E\colon \pi_{8n+1}(\overline{W}) \to \pi_{8n+2}(W(0)_{8n+2})$ is surjective; thus, $\lambda(0) \in \mathrm{Im} E \subset \pi_{8n+2}(W(0)_{8n+2})$. Then $W(0)_{8n+3}$ is a suspension and thus the statement follows.
\end{proof}

\subsection{The $n=3,7$ cases}\label{3 and 7}

Throughout this subsection, we consider the homotopy fibrations
\[
  S^n \to X \to S^{n+1}\quad\text{with}\quad n =  3,7,
\]
and regard $S^3$ and $S^7$ as the unit spheres of $\mathbb{H}$ and $\mathbb{O}$, respectively. Then $S^n$ has a product. We recall from \cite{Ta58} an explicit description of $\pi_n(SO(n+1))$. We define maps $\rho, \sigma\colon S^n \to SO(n+1)$ by
\[
  \rho(x)(y) = xyx^{-1},\quad\sigma(x)(y) = xy
\]
for $x,y\in S^n$. Let $\Z\{a_1,\ldots,a_k\}$ denote the free abelian group generated by $a_1,\ldots,a_k$ and let $\mathbf{r}\colon SU(\frac{n+1}{2}) \to SO(n+1)$ denote the realification map.

\begin{lemma}
  \label{char generators}
  \begin{enumerate}
    \item $\pi_n(SO(n+1))=\Z\{\rho,\sigma\}$.
    \item $\sigma\in\mathrm{Im}\{\mathbf{r}_*\colon\pi_n(SU(\frac{n+1}{2}))\to\pi_n(SO(n+1))\}$.
  \end{enumerate}
\end{lemma}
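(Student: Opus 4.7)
The plan is to apply the long exact sequence of the fibration
\[
  SO(n)\to SO(n+1)\xrightarrow{p}S^n,\qquad p(A)=A(1),
\]
where $1\in S^n$ denotes the multiplicative identity. For part (1), the first observation is that $\sigma$ is a section of $p$, since $p(\sigma(x))=\sigma(x)(1)=x\cdot 1=x$. Hence $\sigma_*$ splits the surjection $p_*\colon\pi_n(SO(n+1))\to\pi_n(S^n)\cong\mathbb{Z}$, and the long exact sequence yields
\[
  \pi_n(SO(n+1))\cong\pi_n(SO(n))\oplus\mathbb{Z}\{\sigma\}.
\]
On the other hand, the identity $\rho(x)(1)=xx^{-1}=1$ shows that $\rho$ factors through the stabilizer $SO(n)$. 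It then remains to identify $\rho$ with a generator of $\pi_n(SO(n))\cong\mathbb{Z}$. For $n=3$ this is transparent: $\rho$ is the standard adjoint representation $SU(2)\to SO(3)$, i.e.\ the universal double cover, which generates $\pi_3(SO(3))$. For $n=7$, the identification of $\rho$ as a generator of $\pi_7(SO(7))\cong\mathbb{Z}$ is classical and I would cite \cite{Ta58}.

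For part (2) with $n=3$, decompose $\mathbb{H}=\mathbb{C}\oplus j\mathbb{C}$ as a right $\mathbb{C}$-module. Left multiplication $L_x$ by a unit quaternion $x$ commutes with the right $\mathbb{C}$-action (by associativity of $\mathbb{H}$), so $L_x$ is $\mathbb{C}$-linear; it preserves the induced Hermitian form, and a continuity argument starting from $L_1=\mathrm{id}$ shows that $\det L_x=1$. This exhibits $\sigma$ as $\mathbf{r}\circ\tilde{\sigma}$ for some $\tilde{\sigma}\colon S^3\to SU(2)$. For $n=7$, this direct approach is blocked by the non-associativity of $\mathbb{O}$, so I would instead argue at the homotopy level: by Bott periodicity $\pi_7(SU(4))\cong\mathbb{Z}$ maps isomorphically to $\pi_7(SU)$, and the composite with $\mathbf{r}_*$ is, up to sign, the standard stable isomorphism $\pi_7(SU)\to\pi_7(SO)$. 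Comparing with the image of $\sigma$ under stabilization $\pi_7(SO(8))\to\pi_7(SO)$, which is a generator of the stable summand, yields the desired lift.

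The main obstacle is the $n=7$ half of part (2): no direct ``complex structure on $\mathbb{O}$'' argument is available, so one is forced into a stabilization/Bott-periodicity comparison together with an unstable check that the stable lift descends to a lift through $SU(4)$, using the fact that $\pi_7(SU(4))\to\pi_7(SU)$ is already an isomorphism in this range. This is precisely the calculation carried out in \cite{Ta58}, which I would invoke to close the argument.
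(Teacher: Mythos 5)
The paper offers no proof of this lemma; it is simply a recollection from Tamura (the sentence preceding it reads ``We recall from \cite{Ta58} an explicit description of $\pi_n(SO(n+1))$''). So there is nothing in the paper to compare against, and your write-up is an independent attempt. Your outline for part~(1) is reasonable: $\sigma$ is indeed a section of $SO(n+1)\to S^n$, so the fibration long exact sequence splits, and $\rho$ does factor through the isotropy copy of $SO(n)$; you then need to supply $\pi_n(SO(n))\cong\Z$ (nontrivial for $n=7$, as this is just outside the stable range) and the identification of $\rho$ with a generator, which you ultimately defer to \cite{Ta58} for $n=7$. For part~(2) with $n=3$, the decomposition $\mathbb{H}=\C\oplus j\C$ works, but the ``continuity from $L_1=\mathrm{id}$'' step does not in fact pin down $\det L_x=1$ (a continuous path in $U(1)$ from $1$ can reach any point); what you should use is that $x\mapsto L_x$ is a group homomorphism, so $x\mapsto\det L_x$ is a homomorphism $S^3\to U(1)$ and hence trivial.

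The real problem is the $n=7$ half of part~(2). You assert that ``the composite with $\mathbf{r}_*$ is, up to sign, the standard stable isomorphism $\pi_7(SU)\to\pi_7(SO)$.'' This is false: by Bott periodicity the realification $r\colon\tilde K^0(S^{4k})\to\tilde{KO}^0(S^{4k})$ alternates between an isomorphism ($k$ odd, e.g.\ $S^4$) and multiplication by $2$ ($k$ even, e.g.\ $S^8$), so $\mathbf{r}_*\colon\pi_7(SU)\to\pi_7(SO)$ is multiplication by $2$, not an isomorphism. (One quick way to see it: a generator $\xi\in\tilde K^0(S^8)$ has $c_4(\xi)=\pm 6u$, so $p_2(r\xi)=-c_4(\xi\oplus\bar\xi)=\pm 12u$, while a generator of $\tilde{KO}^0(S^8)$ has $p_2=\pm6u$.) Your argument then also asserts that $\sigma\in\pi_7(SO(8))$ stabilizes to a generator of $\pi_7(SO)$ --- which is consistent with Tamura's formula $p_2=\pm6(2k+l)u$ quoted in this paper --- and combining these two claims would prove that $\sigma$ is \emph{not} in the image of $\mathbf{r}_*$, contradicting the very lemma you are trying to establish. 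So this step cannot be right as written; whatever makes the lemma true (perhaps a different normalization or a different map called $\mathbf{r}$ in \cite{Ta58}) is precisely what the stabilization argument fails to capture, and you cannot avoid engaging with Tamura's unstable computation directly.
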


Let $T\to S^{n+1}$ denote the unit tangent bundle.

\begin{lemma}
  \label{V2 char}
  The map $-\rho + 2\sigma\colon S^n\to SO(n+1)$ determines $T$.
\end{lemma}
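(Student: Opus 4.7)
The plan is to identify the classifying element $\phi\in\pi_n(SO(n+1))$ of the $S^n$-bundle $T\to S^{n+1}$ in the basis supplied by Lemma \ref{char generators}(1). Writing $\phi=a\rho+b\sigma$ with $a,b\in\Z$, I will pin down $b$ by an Euler-number computation and $a$ by the stable triviality of $TS^{n+1}$.

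To determine $b$, note that the oriented rank-$(n+1)$ vector bundle $TS^{n+1}$ has Euler number $\chi(S^{n+1})=2$ since $n+1\in\{4,8\}$ is even. Under the clutching construction, this Euler number equals the degree of the composite
\[
S^n\xrightarrow{\phi}SO(n+1)\xrightarrow{\mathrm{ev}_1}S^n,
\]
where $\mathrm{ev}_1(A)=A(1)$ is evaluation at the unit $1\in S^n$. Direct computation gives $\mathrm{ev}_1\circ\rho\equiv 1$ (degree $0$) because $\rho(x)(1)=xx^{-1}=1$, and $\mathrm{ev}_1\circ\sigma=1_{S^n}$ (degree $1$) because $\sigma(x)(1)=x$. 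Hence the degree of the composite is exactly $b$, forcing $b=2$.

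To determine $a$, observe that $TS^{n+1}\oplus\epsilon^1\cong\epsilon^{n+2}$ via the outward unit normal, so $\phi$ lies in the kernel $K$ of the stabilization $s\colon\pi_n(SO(n+1))\to\pi_n(SO(n+2))$. The long exact sequence of the fibration $SO(n+1)\to SO(n+2)\to S^{n+1}$ identifies $K$ as the image of a generator of $\pi_{n+1}(S^{n+1})\cong\Z$, so $K$ is infinite cyclic. To pin down the generator in the basis $\{\rho,\sigma\}$, I proceed as follows. For $n=3$, use the identification $\mathrm{Spin}(4)=S^3\times S^3$ together with the inclusion $\mathrm{Spin}(4)\hookrightarrow\mathrm{Sp}(2)=\mathrm{Spin}(5)$ as the diagonal $\mathrm{Sp}(1)\times\mathrm{Sp}(1)\hookrightarrow\mathrm{Sp}(2)$; a direct computation then yields $K=\Z\{-\rho+2\sigma\}$. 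For $n=7$, Lemma \ref{char generators}(2) together with Bott periodicity imply that $s(\sigma)$ is a generator of $\pi_7(SO)\cong\Z$, and the relation $s(\rho)=2s(\sigma)$ (verifiable via the factorization of $\rho$ through the double cover $S^7\to SO(7)$, or via a Pontryagin class computation of $p_2$) again yields $K=\Z\{-\rho+2\sigma\}$. Combined with $b=2$ from the previous step, this forces $\phi=-\rho+2\sigma$.

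The main obstacle is the determination of the generator of $K$ for $n=7$, where $\mathrm{Spin}(8)$ lacks the product decomposition available in the $n=3$ case. I expect to handle this either by explicit computation of $p_2(\rho)$ and $p_2(\sigma)$ in $H^8(S^8;\Z)$, or by appealing to the triality structure of $\mathrm{Spin}(8)$ together with Lemma \ref{char generators}(2) to reduce to the stable classification.
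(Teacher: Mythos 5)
Your overall strategy is sound and in fact parallels the paper's. Both proofs pin down the $\sigma$--coefficient by an Euler-number computation: your degree calculation of $\mathrm{ev}_1\circ\phi$ is exactly the paper's observation that $\mu(x,1) = ((k\rho+l\sigma)(x))(1) = x^l$, hence $\chi(X) = l$, forcing $l = 2$. And both use the stable triviality $TS^{n+1}\oplus\epsilon^1\cong\epsilon^{n+2}$ to get the $\rho$--coefficient. Where you diverge is in how the second step is executed. The paper invokes Tamura's formulas for the Pontryagin classes of the sphere bundle over $S^{n+1}$ classified by $k\rho+l\sigma$, namely $p_1(V)=\pm 2(2k+l)u$ for $n=3$ and $p_2(V)=\pm 6(2k+l)u$ for $n=7$; combined with $p(TS^{n+1})=1$ and $l=2$, a single citation forces $k=-1$ in both cases at once.

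Your alternative for the second step is incomplete as written. The $n=3$ argument via $\mathrm{Spin}(4)=S^3\times S^3\hookrightarrow\mathrm{Sp}(2)=\mathrm{Spin}(5)$ can be carried out (in those coordinates $\rho\leftrightarrow(1,1)$, $\sigma\leftrightarrow(1,0)$ on $\pi_3$, and the kernel of stabilization is $\Z\{(1,-1)\}$), but for $n=7$ the two assertions you rely on --- that $s(\sigma)$ generates $\pi_7(SO)$ and that $s(\rho)=2\,s(\sigma)$ --- are not established. Lemma \ref{char generators}(2) only places $\sigma$ in the image of $\mathbf{r}_*$; it does not say $\sigma$ is the realification of a \emph{generator} of $\pi_7(SU(4))$, and the factorization of $\rho$ through a double cover does not by itself produce the factor $2$ in the stable group. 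You flag this gap yourself and name the correct repair: compute $p_2(\rho)$ and $p_2(\sigma)$ directly. That is precisely what the citation to Tamura supplies, and it handles $n=3$ and $n=7$ uniformly; until that computation (or an equivalent) is actually done, the $n=7$ case is open in your proposal.
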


\begin{proof}
  Let $V\to S^{n+1}$ be the real vector bundle of rank $n+1$ which is determined by the map $k\rho + l\sigma$.  Then by \cite[Theorems 4.1 and 4.4]{Ta57}, we have
  \[
  p_1(V) = \pm2(2k+l)u \quad\text{for}\quad n=3,\quad\text{and}\quad p_2(V) = \pm6(2k+l)u\quad\text{for}\quad n=7,
  \]
  where $p_i(V)\in H^{4i}(S^{n+1};\Z)$ denotes the $i$-th Pontryagin class and $u$ denotes the generator of $H^{n+1}(S^{n+1};\Z)\cong\Z$.

  Let $X\to S^{n+1}$ be the unit sphere bundle of $V$. Then $X$ has the homotopy type of the homotopy pushout of the cotriad $S^n\xleftarrow{p_2}S^n\times S^n\xrightarrow{\mu}S^n$, where $\mu$ is defined by
  \[
  \mu(x,y) = ((k\rho + l\sigma)(x))(y).
  \]
  Since $\mu(x,1)=x^l$, we get $\chi(X) = \mu\vert_{S^n\times *}=l$, which is the Euler class of $V\to S^{n+1}$. Then, for the tangent bundle $TS^{n+1}\to S^{n+1}$, we get $l = 2$ as the Euler class $e(TS^{n+1})$ is $2$. Furthermore, the total Pontryagin class $p(TS^{n+1})$ is equal to $1$; therefore, we get $k = -1$. Thus $TS^{n+1}\to S^{n+1}$ is determined by $-\rho + 2\sigma$, as stated.
\end{proof}

We define
\[
  \mu\colon S^n\times S^n\to S^n,\quad(x,y)\mapsto xyx=((-\rho + 2\sigma)(x))(y).
\]
Then by Lemma \ref{V2 char}, $T$ has the homotopy type of the homotopy pushout of the cotriad $S^n\xleftarrow{p_2}S^n\times S^n\xrightarrow{\mu}S^n$. We also define
\[
  \hat{\mu}\colon S^n\times S^n\to S^n\quad(x, y)\mapsto x^2y=(2\sigma(x))(y).
\]
Then as $\mu\vert_{S^n\vee S^n}=\hat{\mu}\vert_{S^n\vee S^n}$, there exists $\alpha \in \pi_{2n}(S^n)$ such that $\hat{\mu}=\mu(\alpha)$. Thus by Lemma \ref{char generators}, the homotopy fibration $S^n\to T(\alpha)\to S^{n+1}$ defined by $\hat{\mu}=\mu(\alpha)$ is equivalent to the unit sphere bundle of a complex vector bundle $V\to S^{n+1}$ of rank $\frac{n+1}{2}$.

Recall that $\pi_{2n}(S^n)$ is isomorphic to $\Z/12$ for $n=3$ and $\Z/120$ for $n=7$.

\begin{proposition}
  \label{Samelson}
  The element $\alpha\in\pi_{2n}(S^n)$ is a generator.
\end{proposition}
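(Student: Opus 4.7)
The plan is to compute the two Hopf constructions $\mathcal{H}(\mu)$ and $\mathcal{H}(\hat\mu)$ in $\pi_{2n+1}(S^{n+1})$, extract $E\alpha$ from their difference, and recognize it as the suspension of a generator of $\pi_{2n}(S^n)$.

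First I would compute $\mathcal{H}(\hat\mu)$. Writing $\hat\mu = m\circ(\mathrm{sq}\times 1_{S^n})$, where $m$ is the multiplication on $S^n$ and $\mathrm{sq}(x) = x^2$ is the squaring map, the naturality of the Hopf construction under products yields $\mathcal{H}(\hat\mu) = \mathcal{H}(m)\circ(\mathrm{sq}*1_{S^n})$. Since $\mathrm{sq}$ has degree $2$ on $S^n$ for $n=3,7$, the join $\mathrm{sq}*1_{S^n}$ also has degree $2$ on $S^{2n+1}$, so $\mathcal{H}(\hat\mu) = 2h$, where $h$ denotes the Hopf map ($\nu$ for $n=3$, $\sigma$ for $n=7$).

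Next I would identify $\mathcal{H}(\mu)$ with the Whitehead square. By Lemma \ref{V2 char}, $\mu$ is the clutching function of the unit tangent bundle $TS^{n+1}$; the classical identification of $\mathrm{Th}(TS^{n+1})$ with $S^{n+1}\cup_{[\iota_{n+1},\iota_{n+1}]} e^{2n+2}$ then gives $\mathcal{H}(\mu) = [\iota_{n+1},\iota_{n+1}]$. Combining with James' formula $[\iota_{n+1},\iota_{n+1}] = 2h - E\theta$, where $\theta\in\pi_{2n}(S^n)$ is a generator ($\nu'$ for $n=3$, $\sigma'$ for $n=7$), I obtain $\mathcal{H}(\mu) = 2h - E\theta$. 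Since $\hat\mu = \mu(\alpha)$ and the Hopf construction is additive under top-cell modifications (so that $\mathcal{H}(\mu(\alpha)) = \mathcal{H}(\mu) + E\alpha$, by Lemma \ref{iota retraction}), subtracting yields $E\alpha = E\theta$. Finally, by Toda's tables, the suspension $E\colon\pi_{2n}(S^n)\to\pi_{2n+1}(S^{n+1})$ is injective for $n=3,7$: $\pi_6(S^3)\cong\Z/12$ maps isomorphically onto the torsion summand of $\pi_7(S^4)\cong\Z\oplus\Z/12$, and $\pi_{14}(S^7)\cong\Z/120$ onto that of $\pi_{15}(S^8)\cong\Z\oplus\Z/120$. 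Therefore $\alpha = \theta$ is a generator.

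The main obstacle will be justifying $\mathcal{H}(\mu) = [\iota_{n+1},\iota_{n+1}]$ together with the sign-sensitive James' identity; both are classical but demand careful reference tracking. An alternative route, which sidesteps these classical inputs, is to compute $\mu(x,y)\cdot\hat\mu(x,y)^{-1} = x[y,x]x^{-1}$ directly via the (Moufang) H-space structure on $S^n$, conclude that $\alpha = \pm\langle\iota_n,\iota_n\rangle$ is the Samelson product of the identity with itself, and then invoke the classical fact that this Samelson product generates $\pi_{2n}(S^n)$ for $n=3,7$.
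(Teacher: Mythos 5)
Your main route is genuinely different from the paper's. The paper works entirely inside $\pi_{2n}(S^n)$: it forms the pointwise quotient $\omega = \mu(0)\cdot\mu(\alpha)^{-1}$, computes $\omega(x,y) = x(yxy^{-1}x^{-1})x^{-1}$ using the $\mathbb{H}$- or $\mathbb{O}$-multiplication, factors $\omega = \gamma\circ\tau\circ q$ with $\gamma$ the commutator map and $\tau$ a self-homotopy-equivalence of $S^n\wedge S^n$, and invokes the injectivity of $q^*$ (Zabrodsky) to get $\alpha = \gamma\circ\tau$; it then cites James and Samelson that $\gamma$ generates. Your \emph{alternative} route at the end is precisely this argument, except you collapse the $\gamma\circ\tau$ factorization into $\pm\langle\iota_n,\iota_n\rangle$ (harmless, since $\tau$ has degree $\pm1$). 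Your \emph{main} route instead suspends everything: it compares $\mathcal{H}(\hat\mu)=2h$ and $\mathcal{H}(\mu)=[\iota_{n+1},\iota_{n+1}]$, invokes James' formula $[\iota_{n+1},\iota_{n+1}]=2h-E\theta$ with $\theta$ a generator, computes $E\alpha = E\theta$, and finally uses that $E$ is injective in this range. Both work; what your main route buys is freedom from the Samelson-product input, and what it costs is the identification $\mathcal{H}(\mu)=[\iota_{n+1},\iota_{n+1}]$ together with the exact form of the James identity. You correctly flag that this identification (equivalently, $J(\tau_{S^{n+1}}) = \pm[\iota_{n+1},\iota_{n+1}]$ exactly, not merely mod suspensions — precision matters here because you subtract two expressions both of Hopf invariant $2$) is the step demanding the most care; it is classical (G.~W.~Whitehead) but one should either cite it precisely or, as the paper effectively does via Lemma~\ref{V2 char}, replace it by an explicit handle on the clutching function. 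Both approaches, like the paper's, also silently rely on the algebraic computation surviving the non-associativity of $\mathbb{O}$ for $n=7$; this is fine (Moufang/alternative laws) but unremarked in all three arguments.
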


\begin{proof}
  Let $\omega=\mu(0)\cdot\mu(\alpha)^{-1}\colon S^n\times S^n\to S^n$. Then $\omega$ is given by a map
  \[
    S^n\times S^n\to S^n,\quad(x, y) \mapsto (x,y)\mapsto(xyx)(x^2y)^{-1} = x(yxy^{-1}x^{-1})x^{-1}.
  \]
  Let $\tau\colon S^n\wedge S^n\to S^n\wedge S^n$ denote the map $(x, y)\mapsto (xy, x)$ and $\gamma\colon S^n\wedge S^n\to S^n$ denote the commutator map. Then, for the pinch map $q\colon S^n\times S^n\to S^n\wedge S^n$, it is straightforward to check
  \[
    \omega=\gamma\circ\tau\circ q.
  \]
  On the other hand, we have $\omega=\alpha\circ q$ by definition. As $S^n$ is an H-space, the map $q^*\colon[S^n\wedge S^n,S^n]\to[S^n\times S^n,S^n]$ is injective by \cite[Lemma 1.3.5]{Z}; thus, we get $\alpha=\gamma\circ\tau$. Thus since $\tau$ is a homotopy equivalence, $\alpha$ is a generator of $\pi_{2n}(S^n)$ if and only if $\gamma$ is so. Now  by \cite[\S 9]{J60} and \cite[Theorem II]{Sa}, $\gamma$ generates $\pi_{2n}(S^n)$, so $\alpha$ does too.
\end{proof}

\begin{proof}
  [Proof of Theorem \ref{main3} for $n = 3, 7$]
  By Propositions \ref{coaction3} and \ref{Samelson}, we have $\lambda(\alpha) = \lambda(0) + i_*\alpha$, where $\alpha$ is a generator of $\pi_{2n}(S^n)$ and $i$ is the bottom cell inclusion $S^n\to W(\alpha)_{2n}$. Then by \cite[Chapter V]{To}, we get
  \[
    H(\alpha)  = \eta \neq 0,
  \]
  Therefore we have $\overline{H}_{\nabla}(\alpha)\neq 0$ by Theorem \ref{JH and crude}. Arguing as in the proof of Theorem \ref{main1}, the map $i^{\wedge2}_*\colon \pi_{2n+1}(S^{2n}) \to\pi_{2n+1}((W(\alpha)_{2n})^{\wedge2})$ is an isomorphism, and so we get
  \[
  \overline{H}_{\nabla}(\lambda(\alpha)) - \overline{H}_{\nabla}(\lambda(0)) = \overline{H}_{\nabla}(\lambda(\alpha)-\lambda(0)) = \overline{H}_{\nabla}(i_*\alpha) = i^{\wedge2}_*\overline{H}_{\nabla}(\alpha) \neq 0.
  \]
  Recall that there is a complex vector bundle $V\to S^{n+1}$ such that $T(\alpha)$ is equivalent to its unit sphere bundle. Then $\TC[T(\alpha)\to S^{n+1}] = 1$ by \cite[Corollary 17]{FW1}, implying that $\overline{H}_{\nabla}(\lambda(\alpha))=0$, and so we get  $\overline{H}_{\nabla}(\lambda(0)) \neq 0$. We note that $\Sigma(\eta \circ \chi(T)) = \Sigma(\eta\circ 2) = 0$; then $\wcat(W(0))=2$ by Corollary \ref{sufficient condition}. Thus by Proposition \ref{upper bound odd} and Corollary \ref{lower bound wcat}, $\TC[T\to S^{n+1}] = 2$ as stated.
\end{proof}

\begin{corollary}
  Let $k$ be an integer that is prime to $12$ if $n=3$, prime to $120$ if $n=7$, and let $l$ even. If the bundle $S^n\to X\to S^{n+1}$ is determined by $k\rho+l\sigma$, then $\TC[X\to S^{n+1}] = 2$.
\end{corollary}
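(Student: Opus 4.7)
The plan is to parallel the proof of Theorem \ref{main3} for $n=3,7$ as closely as possible, replacing the tangent bundle with the bundle coming from $k\rho+l\sigma$. First, I would realize $X$ as the homotopy pushout of the cotriad
\[
S^n\xleftarrow{p_2}S^n\times S^n\xrightarrow{\mu_{k,l}}S^n,\qquad \mu_{k,l}(x,y)=((k\rho+l\sigma)(x))(y).
\]
Using the H-space structure of $SO(n+1)$ (and interpreting everything up to homotopy for $n=7$), the map $\mu_{k,l}$ has the convenient representative $\mu_{k,l}(x,y)=x^{k+l}yx^{-k}$. As in the derivation of Lemma \ref{V2 char}, this yields $\chi(X)=l$; since $l$ is even and $\eta$ has order $2$, we obtain $\Sigma(\eta\circ\chi(X))=0$, which disposes of one hypothesis of Corollary \ref{sufficient condition}.

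Next I would identify the relevant $\alpha\in\pi_{2n}(S^n)$. By Lemma \ref{char generators}(2), the class $l\sigma$ lies in $\mathrm{Im}\,\mathbf{r}_*$, so the map $\hat{\mu}(x,y)=x^ly$ classifies the unit sphere bundle of a complex vector bundle. Because $\mu_{k,l}$ and $\hat{\mu}$ agree on $S^n\vee S^n$ (both restrict to $l+1_{S^n}$), there is a unique $\alpha\in\pi_{2n}(S^n)$ with $\hat{\mu}=\mu_{k,l}(\alpha)$. Mimicking the proof of Proposition \ref{Samelson}, I would compute
\[
\omega(x,y)=\hat{\mu}(x,y)\cdot\mu_{k,l}(x,y)^{-1}=x^l[y,x^k]x^{-l}.
\]
Conjugation by $x^l$ is null-homotopic, so $\omega$ is homotopic to $(x,y)\mapsto\gamma(y,x^k)$, i.e. to the composite $\gamma\circ\tau_0\circ(k\wedge 1)$, where $\gamma$ is the commutator map and $\tau_0$ the swap of $S^n\wedge S^n$. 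Since pre-composition with the degree-$k$ self-map $k\wedge 1$ of $S^{2n}$ acts as multiplication by $k$ on $\pi_{2n}(S^n)$, and since $q^*$ is injective (as in the proof of Proposition \ref{Samelson}), we conclude $\alpha=\pm k\gamma'$, where $\gamma'=\gamma\circ\tau_0$ is the generator of $\pi_{2n}(S^n)$ identified in \cite[\S 9]{J60}, \cite[Theorem II]{Sa}. Because $k$ is coprime to $|\pi_{2n}(S^n)|=12$ or $120$, $\alpha$ is itself a generator.

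With $\alpha$ a generator, the remainder of the proof of Theorem \ref{main3} for $n=3,7$ carries over verbatim: by \cite[Chapter V]{To} we have $H(\alpha)=\eta\ne 0$, so Theorem \ref{JH and crude} gives $\overline{H}_\nabla(\alpha)\ne 0$; since $X(\alpha)$ is the unit sphere bundle of a complex vector bundle, \cite[Corollary 17]{FW1} forces $\TC[X(\alpha)\to S^{n+1}]=1$ and hence $\overline{H}_\nabla(\lambda(\alpha))=0$; combined with the isomorphism $i^{\wedge 2}_*\colon \pi_{2n+1}(S^{2n})\to\pi_{2n+1}((W(\alpha)_{2n})^{\wedge 2})$ and Proposition \ref{coaction3}, this gives $\overline{H}_\nabla(\lambda(0))\ne 0$. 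Corollary \ref{sufficient condition} then yields $\wcat(W(0))=2$, and Proposition \ref{upper bound odd} together with Corollary \ref{lower bound wcat} gives $\TC[X\to S^{n+1}]=2$.

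The step I expect to be most delicate is the identification $\omega\simeq\pm k\gamma'$. For $n=7$ the octonionic multiplication is non-associative, so the algebraic manipulations leading to $\omega(x,y)=x^l[y,x^k]x^{-l}$ cannot be taken literally; they must be reinterpreted homotopically. The right framework is to appeal to the H-space structure on $S^n$, reduce everything to the smash product via the injectivity of $q^*$ (compare \cite[Lemma 1.3.5]{Z}), and use the fact that pre-composing a map out of $S^{2n}$ with a degree-$k$ self-map of $S^{2n}$ multiplies by $k$ in the group $\pi_{2n}(S^n)$. Once this is handled cleanly, the remaining arguments are purely formal consequences of what has already been proved in the paper.
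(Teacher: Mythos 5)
Your proof is correct and follows the paper's own argument: both compare the bundle for $k\rho+l\sigma$ with the bundle for $l\sigma$, identify the latter as a complex sphere bundle via Lemma \ref{char generators}(2) so that \cite[Corollary 17]{FW1} kills $\overline{H}_\nabla(\lambda(\alpha))$, and then conclude $\overline{H}_\nabla(\lambda(0))\neq 0$ from $\alpha$ being a generator and apply Corollary \ref{sufficient condition}. The only difference is that you spell out explicitly that $\alpha=\pm k\gamma'$ (so that coprimality of $k$ to $|\pi_{2n}(S^n)|$ is what makes $\alpha$ a generator), handling the conjugation by $x^l$ and the degree-$k$ precomposition carefully, whereas the paper compresses all of this into the phrase ``arguing as in the proof of Proposition \ref{Samelson}.''
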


\begin{proof}
  Let $S^n\to \widehat{X}\to S^{n+1}$ be the sphere bundle that is determined by $l\sigma$. Then there is an element $\alpha\in\pi_{2n}(S^n)$ such that $\widehat{X} = X(\alpha)$. Arguing as in the proof of Proposition \ref{Samelson}, it follows that $\alpha$ is a generator. Thus since $\chi(X)=l$ is even, the statement is proved in the same way as in the proof of Theorem \ref{main3} for $n = 3, 7$.
\end{proof}

%%%%% Section 7 %%%%%

\section{The unit tangent bundle}\label{tangent}

Throughout this subsection, we consider the unit tangent bundle
\[
  S^{4n+1} \to T \to S^{4n+2}.
\]
Let $\widetilde{T}\to S^{4n+2}$ be the orthonormal $2$-frame bundle of the tangent bundle of $S^{4n+2}$. Then there is a fiber bundle $S^{4n}\to\widetilde{T}\xrightarrow{p} T$, which is equivalent to the fiber bundle
\[
S^{4n}\to V_{3}(\R^{4n+3})\xrightarrow{p} V_{2}(\R^{4n+3}),
\]
where $V_k(\R^{4n+3})$ is the space of the orthonormal $k$-frames in $\R^{4n+3}$ and $p$ is the projection of the first $2$ vectors. Assuming Proposition \ref{main4}, we get an inequality
\[
\wcat(W(0))=1 \le \TC[T\to S^{4n+2}] \le \secat(p)+1
\]
by Proposition \ref{upper bound FW}. On the other hand, by \cite[Theorem 1.15]{J76}, $\secat(p)=0$ if and only if $n=0, 1$. Thus we obtain $\TC[T\to S^{4n+2}] = 1$ for $n=0, 1$. Remark that, for $n\ge2$, $\TC[T\to S^{4n+2}]$ cannot be determined by the upper and lower bounds we have mentioned.

We now aim to prove Proposition \ref{main4}. First, we provide another description of $W(0)$ as the cofiber of a map.

\begin{lemma}
  The complex $W(0)$ has the homotopy type of the cofiber of $p\colon \widetilde{T}\to T$.
\end{lemma}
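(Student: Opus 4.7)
My approach is geometric: I would realize both $T\times_{S^{4n+2}}T$ and $\widetilde T$ as unit sphere bundles over $T$ of vector bundles that differ by a trivial line factor produced by $\Delta$, and then identify the cofiber of $\Delta$ with the cofiber of $p$ via a fiberwise suspension argument. Let $\tau\to S^{4n+2}$ denote the tangent bundle and $\pi\colon T\to S^{4n+2}$ the projection. First I would identify $T\times_{S^{4n+2}}T$ with the unit sphere bundle $S(\pi^*\tau)\to T$, so that a point is a triple $(v_1,v_2,w)$ with $w$ a unit vector in $T_{v_1}S^{4n+2}$. Under this identification, the fiberwise diagonal $\Delta\colon T\to T\times_{S^{4n+2}}T$ sends $(v_1,v_2)$ to $(v_1,v_2,v_2)$, exhibiting $\Delta$ as a nowhere-vanishing section of $\pi^*\tau$. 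This produces an orthogonal splitting $\pi^*\tau\cong\varepsilon\oplus\xi$, where $\varepsilon$ is the trivial line subbundle spanned by $\Delta$ and $\xi$ is its rank-$(4n+1)$ orthogonal complement. By construction, a unit vector in $\xi_{(v_1,v_2)}$ is a unit vector in $T_{v_1}S^{4n+2}$ orthogonal to $v_2$, i.e., a triple $(v_1,v_2,v_3)\in\widetilde T$; hence $S(\xi)\simeq\widetilde T$ over $T$, with projection $p$.

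Next I would invoke the standard identification $S(\varepsilon\oplus\xi)\cong\Sigma_T S(\xi)$, where $\Sigma_T$ denotes fiberwise unreduced suspension, realized as the double mapping cylinder $T\cup_p(\widetilde T\times[-1,1])\cup_p T$. Under this identification the two canonical pole sections of the suspension correspond to $\pm\Delta$, so $\Delta$ is identified with one of the two copies of $T$ glued at the ends.

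Finally, $W(0)=(T\times_{S^{4n+2}}T)/\Delta(T)$ is obtained from the double mapping cylinder by collapsing the distinguished end copy of $T$ to a point. The remaining space is $T\cup_p C\widetilde T$, the mapping cone of $p$, which by definition is the homotopy cofiber of $p\colon\widetilde T\to T$. The main technical point is verifying that the suspension identification carries $\Delta$ to a pole section rather than to some twisted section, which is immediate once the splitting $\pi^*\tau=\varepsilon\oplus\xi$ with $\varepsilon$ spanned by $\Delta$ is fixed.
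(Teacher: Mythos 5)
Your proof is correct and takes essentially the same approach as the paper: both arguments decompose $T\times_{S^{4n+2}}T$ as the fiberwise unreduced suspension of $\widetilde T$ over $T$ and identify $\Delta$ with one of the two pole sections, so that collapsing it leaves the mapping cone of $p$. The paper realizes the two halves of the suspension concretely as the hemispheres $T_\pm=\{\pm\langle x,y\rangle\ge0\}$, which is just your splitting $\pi^*\tau\cong\varepsilon\oplus\xi$ written in terms of the inner product with the tautological section.
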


\begin{proof}
  Let $T_+ \to S^{4n+2}$ and $T_- \to S^{4n+2}$ be the subbundle of $T\times_{S^{4n+2}}T \to S^{4n+2}$ defined by
  \[
  T_+ = \{(x, y)\in T\times_{S^{4n+2}}T\mid \langle x,y\rangle \ge 0\}\quad\text{and}\quad T_- = \{(x, y)\in T\times_{S^{4n+2}}T\mid \langle x,y\rangle \le 0\}
  \]
  where $\langle x,y\rangle$ denotes the fiberwise inner product. Then since $\widetilde{T} = T_+\cap T_-$, $T\times_{S^{4n+2}}T$ is the pushout of the cotriad
  \[
  T_+ \hookleftarrow \widetilde{T} \hookrightarrow T_-.
  \]
  Since $\widetilde{T}$ is obviously a neighborhood deformation retract of $T_-$, the inclusion $\widetilde{T}\to T_-$ is a cofibration. Then $T\times_{S^{4n+2}}T$ is also the homotopy pushout of the cotriad above. Now we have a commutative diagram
  \[
  \xymatrix{
  T_+ \ar[r]^{p_1}\ar[d]& T \ar[d]& T_-\ar[l]_{p_1} \ar[d]\\
  S^{4n+2} \ar@{=}[r]& S^{4n+2} & S^{4n+2}\ar@{=}[l]
  }
  \]
  where the first projection $p_1$ is a homotopy equivalence. Then since the composite
  \[
  T_+\xrightarrow{p_1}T\xrightarrow{\Delta}T\times_{S^{4n+2}}T
  \]
  is homotopic to the inclusion $T_+\to T\times_{S^{4n+2}}T$, we get a homotopy pushout diagram
  \[
  \xymatrix{
  \widetilde{T} \ar[r]\ar[d]^{p}& T \ar[d]^{\Delta}\\
  T \ar[r]& T\times_{S^{4n+2}}T.
  }
  \]
  Thus by taking the cofiber of the vertical maps, the statement is proved.
\end{proof}

We consider the cell decomposition of $W(0)$ in relation to the cell structures of $T$ and $\widetilde{T}$. We recall the cell decomposition of the real Stiefel manifolds by James \cite{J76}.

\begin{lemma}
  \label{V3 cell}
  Let $Q_{k+l,l} = \R P^{k+l}/\R P^{k}$.
  \begin{enumerate}
    \item There is a cell decomposition $T = V_2(\R^{4n+3}) = Q_{4n+2,2}\cup e^{8n+3}$.
    \item There is a cell decomposition
    \[
    \widetilde{T} = V_3(\R^{4n+3}) = (Q_{4n+2,3} \cup e^{8n+1} \cup e^{8n+2}) \cup_\psi e^{8n+3} \cup e^{12n+3}.
    \]
    \item The map $p\colon \widetilde{T}\to T$ is a cellular map.
  \end{enumerate}
\end{lemma}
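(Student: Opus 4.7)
The plan is to derive all three assertions from the cell decomposition of real Stiefel manifolds due to James \cite{J76}; in that work he constructs CW structures on $V_k(\R^n)$ compatible with the tower of projections $V_k(\R^n)\to V_{k-1}(\R^n)$, with cells organised into blocks indexed by stunted real projective spaces.

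First I would obtain (1) by specialising James's decomposition to $V_2(\R^{4n+3})$. The result is the two-block description $Q_{4n+2,2}\cup e^{8n+3}$, where $Q_{4n+2,2}=\R P^{4n+2}/\R P^{4n}$ has cells in dimensions $4n+1$ and $4n+2$, and the remaining top cell has dimension $\dim V_2(\R^{4n+3})=8n+3$. This matches the alternative description of $T$ as a homotopy fibration $S^{4n+1}\to T\to S^{4n+2}$, which has cells only in dimensions $4n+1,4n+2,8n+3$.

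For (2) I would specialise the same decomposition to $V_3(\R^{4n+3})$. The stunted projective space $Q_{4n+2,3}=\R P^{4n+2}/\R P^{4n-1}$ (with cells in dimensions $4n,4n+1,4n+2$) forms the bottom block; the subsequent filtration step contributes two more cells $e^{8n+1}$ and $e^{8n+2}$, and the final top block contributes $e^{8n+3}$ and $e^{12n+3}$, exhausting the $(12n+3)$-dimensional manifold. The attaching map $\psi$ of $e^{8n+3}$ records how the top block is glued to the lower skeleton. Identifying $\psi$ explicitly is the only delicate point, and I would simply quote James's description of this attaching map from \cite{J76}; a coarse sanity check is that, with this list of cells, the Euler characteristic computes to $1+1-1+1-1+1-1-1=0$, consistent with the odd-dimensional closed manifold $V_3(\R^{4n+3})$.

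Finally, for (3), the projection $p\colon V_3(\R^{4n+3})\to V_2(\R^{4n+3})$ is, by definition, a map in James's tower of Stiefel manifolds, and is a fiber bundle with fiber $S^{4n}$. The cell decompositions from (1) and (2) are compatible with $p$ by construction: each cell of $T$ has precisely two preimage cells in $\widetilde{T}$ whose dimensions differ by $4n$, namely $e^{4n+1}$ lifts to $\{e^{4n+1},e^{8n+1}\}$, $e^{4n+2}$ lifts to $\{e^{4n+2},e^{8n+2}\}$, and $e^{8n+3}$ lifts to $\{e^{8n+3},e^{12n+3}\}$. Consequently $p$ carries the $m$-skeleton of $\widetilde{T}$ into the $m$-skeleton of $T$, so $p$ is cellular; the main obstacle, if any, is book-keeping through James's attaching-map conventions rather than any real topological difficulty.
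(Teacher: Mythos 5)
Your approach matches the paper's: the paper presents this lemma as a direct recollection of James's CW decomposition of real Stiefel manifolds (with the sentence ``We recall the cell decomposition of the real Stiefel manifolds by James'' preceding it and no further proof given), which is precisely what you do. Your dimension count, Euler characteristic sanity check, and the observation that each cell of $T$ has two preimage cells in $\widetilde{T}$ of dimensions differing by $4n$ (so that $p$ is cellular) are all correct; the only minor gap is that you omit listing the preimage cells of the basepoint, namely $\{e^0, e^{4n}\}$ coming from the fiber $S^{4n}$, but this does not affect the argument.
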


We also recall that there are cell decompositions
\[
Q_{4n+2,2} = S^{4n+1} \cup_2 e^{4n+2}\quad\text{and}\quad Q_{4n+2,3} = S^{4n}\vee (S^{4n+1} \cup_2 e^{4n+2}).
\]
From the construction, it follows that $\widetilde{T}_{8n+2}$ is the pullback of the triad
\[
Q_{4n+2,2} \hookrightarrow T \xleftarrow{p} \widetilde{T}.
\]
Since $Q_{4n+2,2}$ is a suspension and $Q_{4n+2,3}$ is the cofiber of the trivial map $S^{4n}\cup_2 e^{4n+1}\to S^{4n}$, there is a map $\mu\colon (S^{4n}\cup_2 e^{4n+1}) \times S^{4n}\to S^{4n}$ with
\[
\mu\vert_{(S^{4n}\cup e^{4n+1})\vee S^{4n}} = 0+1_{S^{4n}}
\]
such that $\widetilde{T}_{8n+2}$ has the homotopy type of the homotopy pushout of the cotriad
\[
S^{4n} \xleftarrow{p_2} (S^{4n}\cup_2 e^{4n+1}) \times S^{4n} \xrightarrow{\mu} S^{4n}.
\]
Let $W'$ be the cofiber of $p\colon\widetilde{T}_{8n+2} \to Q_{4n+2,2}$. Clearly, $\mathrm{dim}\,W' = 8n+3$ and $W'$ is $4n$-connected.
\begin{lemma}
  \label{cat W'}
  $\cat(W') = 1$.
\end{lemma}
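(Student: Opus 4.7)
The plan is to exhibit $W'$ as a suspension, which immediately gives $\cat(W')\le 1$; combined with non-contractibility of $W'$ (which follows from a straightforward homology computation on the cofiber sequence $\widetilde{T}_{8n+2}\to Q_{4n+2,2}\to W'$), this yields $\cat(W')=1$.

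First, I would pin down the cell structure of $W'$. Using Lemma \ref{V3 cell}(2) and the splitting $Q_{4n+2,3}=S^{4n}\vee Q_{4n+2,2}$, I would write
\[
\widetilde{T}_{8n+2} = (S^{4n}\vee Q_{4n+2,2})\cup e^{8n+1}\cup e^{8n+2}.
\]
Since $p\colon\widetilde{T}\to T$ is cellular (Lemma \ref{V3 cell}(3)), the restriction $p|_{Q_{4n+2,3}}$ is null on the wedge summand $S^{4n}$ (the fiber of $p$ over the basepoint, where the third vector of a $3$-frame varies) and is the identity on the $Q_{4n+2,2}$ summand (via the canonical section of $p$ on this low skeleton). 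Hence the two copies of $Q_{4n+2,2}$ cancel in the cofiber, and one finds that
\[
W' \simeq S^{4n+1}\cup_{\alpha} e^{8n+2}\cup_{\beta} e^{8n+3},
\]
where the bottom cell $S^{4n+1}=\Sigma S^{4n}$ arises from the coned-off wedge summand and the two upper cells are suspensions of $e^{8n+1}$ and $e^{8n+2}$ in $\widetilde{T}_{8n+2}$, with attaching maps induced from the James attaching data via $p$.

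Next, I would show $W'$ is a suspension. The attaching map $\alpha\in\pi_{8n+1}(S^{4n+1})$ desuspends by the Freudenthal suspension theorem, since $E\colon\pi_{8n}(S^{4n})\to\pi_{8n+1}(S^{4n+1})$ is surjective at the edge of the stable range; thus $S^{4n+1}\cup_{\alpha}e^{8n+2}$ is a suspension. For the top cell, I would trace the attaching map $\psi$ from Lemma \ref{V3 cell}(2) through the cofiber construction with $p$ and argue, using the James description of $V_3(\R^{4n+3})$, that the resulting attaching map $\beta$ also desuspends. Concluding, $W'$ is a suspension and therefore $\cat(W')\le 1$.

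The principal obstacle is the desuspension of $\beta$, since $\beta\in\pi_{8n+2}(S^{4n+1}\cup_{\alpha}e^{8n+2})$ lies \emph{outside} the Freudenthal stable range (one has $8n+2>2\cdot 4n$), so a direct appeal to Freudenthal is insufficient. A careful analysis of $\psi$, or an identification of $W'$ with the Thom space of the rank-$(4n+1)$ bundle orthogonal to the tautological $2$-plane over $Q_{4n+2,2}$, will be needed. Alternatively, one could bypass the suspension claim and show directly that the reduced diagonal $\overline{\Delta}\colon W'\to W'\wedge W'$ is null-homotopic, exploiting that the only cell of $W'\wedge W'$ in dimensions $\le 8n+3$ is the bottom cell $S^{4n+1}\wedge S^{4n+1}=S^{8n+2}$, which reduces $\cat(W')\le 1$ to vanishing of a single cellular obstruction.
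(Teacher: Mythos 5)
Your primary route (show $W'$ is a suspension) runs into exactly the wall you identify: the attaching map $\beta\in\pi_{8n+2}(S^{4n+1}\cup_\alpha e^{8n+2})$ sits outside the Freudenthal range, and nothing in your sketch actually desuspends it. So that plan is incomplete as stated. Your fallback (factor $\overline\Delta\colon W'\to W'^{\wedge 2}$ through the bottom cell $S^{8n+2}$) correctly observes that the obstruction is concentrated in a single class, but it does not show that class vanishes. Since $(4n+1)+(4n+1)=8n+2$, a priori the cup square $H^{4n+1}(W')\otimes H^{4n+1}(W')\to H^{8n+2}(W')$ could be nontrivial, and that is precisely what the factored reduced diagonal detects; you need an actual argument that it is zero. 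You also silently pass from nullity of $\overline\Delta$ (which gives $\wcat(W')\le 1$) to $\cat(W')\le 1$; these are not the same a priori, and a further step is needed.

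The paper closes both gaps with one idea that is absent from your sketch: the map $p\colon\widetilde T_{8n+2}\to Q_{4n+2,2}$ admits a \emph{global} right homotopy inverse. This is extracted directly from the pushout description of $\widetilde T_{8n+2}$: the first projection $p_1$ of the cotriad defining $\widetilde T_{8n+2}$ gives a retraction $Q_{4n+2,2}\to\widetilde T_{8n+2}\to Q_{4n+2,2}$ homotopic to the identity. (You only invoke a section of $p$ on the low skeleton $Q_{4n+2,3}$, not on all of $\widetilde T_{8n+2}$.) With the section in hand, the Garc\'{i}a-Calcines--Vandembroucq theorem (Proposition \ref{wcat}) immediately gives $\wcat(W')\le 1$, i.e.\ $\overline\Delta$ is null --- this is what makes your "single cellular obstruction" vanish. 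The paper then upgrades $\wcat\le 1$ to $\cat\le 1$ by a Blakers--Massey estimate: the fiber of $W'^2\to W'^{\wedge 2}$ is highly connected relative to $\dim W'=8n+3$, so the null-homotopy of $\overline\Delta$ lifts to a section into $W'\vee W'$. In short, the missing ingredient in your proposal is the explicit section of $p$; without it, both your main plan and your fallback stall at the same obstruction.
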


\begin{proof}
  Consider the homotopy commutative diagram
  \[
  \xymatrix{
  \ast \ar[d]&& (S^{4n}\cup_2 e^{4n+1}) \ar[ll]\ar[rr]\ar[d]^{i_1}&& \ast \ar[d]\\
  S^{4n} \ar[d]&& (S^{4n}\cup_2 e^{4n+1}) \times S^{4n} \ar[ll]_-{p_2}\ar[rr]^-{\mu}\ar[d]^{p_1} && S^{4n} \ar[d]\\
  \ast && (S^{4n}\cup_2 e^{4n+1}) \ar[ll]\ar[rr]&& \ast
  }
  \]
  Note that, for each column, the composite of the two vertical maps is the identity. Then by taking the homotopy pushout of each row, we get ${Q_{4n+2,2}}\to\widetilde{T}_{8n+2}\xrightarrow{p}{Q_{4n+2,2}}$, whose composite is homotopic to the identity map. Then the map $p\colon\widetilde{T}_{8n+2}\to{Q_{4n+2,2}}$ admits a right homotopy inverse. By \cite[Theorem 21]{GV}, we get $\wcat(W') \le 1$. Then by definition of the weak category, the reduced diagonal map  $\overline{\Delta}\colon W' \to W'^{\wedge2}$ is trivial. On the other hand, by the Blakers-Massey theorem, the $(12n+1)$-skeleton of the fiber of the quotient map $W'^2 \to W'^{\wedge2}$ is $W'^2\vee W'^2$. Since $\mathrm{dim}\,W' = 8n+3$, there is a lift $W'\to W'^2\vee W'^2$ of the diagonal map $\Delta\colon W'\to W'^2\times W'^2$. Then we obtain $\cat(W')\le1$. Thus since $W'$ is not contractible, the statement is proved.
\end{proof}

\begin{lemma}
  \label{delta retraction}
  The connecting map $\delta\colon W' \to \Sigma(\widetilde{T}_{8n+2})$ of the homotopy cofibration
  \[
  \widetilde{T}_{8n+2} \xrightarrow{p} Q_{4n+2,2}\to W'
  \]
  admits a left homotopy inverse.
\end{lemma}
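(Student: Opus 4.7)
The plan is to reduce the statement to showing that the cofiber inclusion $j\colon Q_{4n+2,2}\to W'$ is null-homotopic, and then to read off the desired retraction from the exact sequence associated with the cofibration.

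First, I would invoke the right homotopy inverse $s\colon Q_{4n+2,2}\to\widetilde{T}_{8n+2}$ of $p$ constructed in the proof of Lemma \ref{cat W'}, so that $p\circ s\simeq 1_{Q_{4n+2,2}}$. Because $W'$ is the cofiber of $p$, the two consecutive maps $p$ and $j$ in the cofibration sequence compose to a null map, so $j\circ p\simeq 0$. Combining these two facts gives
\[
  j\simeq j\circ 1_{Q_{4n+2,2}}\simeq j\circ p\circ s\simeq 0,
\]
so that $j$ itself is null-homotopic.

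Next, I would apply the contravariant functor $[-,W']$ to the Puppe cofiber sequence
\[
  Q_{4n+2,2}\xrightarrow{j}W'\xrightarrow{\delta}\Sigma(\widetilde{T}_{8n+2}),
\]
obtaining the exact sequence
\[
  [\Sigma(\widetilde{T}_{8n+2}),W']\xrightarrow{\delta^*}[W',W']\xrightarrow{j^*}[Q_{4n+2,2},W'].
\]
Since $j^*(1_{W'})=j$ is null-homotopic, the identity class $1_{W'}$ lies in the kernel of $j^*$, hence in the image of $\delta^*$; any preimage produces a map $r\colon\Sigma(\widetilde{T}_{8n+2})\to W'$ with $r\circ\delta\simeq 1_{W'}$, which is precisely the left homotopy inverse required.

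I do not anticipate a substantive obstacle: the whole argument is formal once the section $s$ from Lemma \ref{cat W'} is in hand. The one observation worth highlighting is that a right homotopy inverse of $p$ automatically forces the cofiber inclusion $j$ to be null-homotopic, which then turns the splitting of $\delta$ into a single application of Puppe exactness.
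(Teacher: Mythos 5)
Your proof is correct, and it reaches the same intermediate goal as the paper (null-homotopy of the cofiber inclusion $j\colon Q_{4n+2,2}\to W'$, followed by Puppe exactness to split $\delta$), but by a strictly simpler route. The paper obtains the nullity of $j$ indirectly: it suspends twice, invokes Blakers--Massey to identify the $(8n+5)$-skeleton of the homotopy fiber of $\Sigma^2 j$ with $\Sigma^2(\widetilde{T}_{8n+2})$, uses the right inverse of $\Sigma^2 p$ to lift the identity into that fiber, and then desuspends via Freudenthal using that $\dim Q_{4n+2,2}=4n+2$ and $W'$ is $4n$-connected. Your argument instead notes that $j\circ p\simeq\ast$ is automatic for any homotopy cofiber sequence and that $p\circ s\simeq 1$, so $j\simeq j\circ p\circ s\simeq\ast$ with no suspension, no connectivity estimate, and no Blakers--Massey. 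This is a purely formal observation that holds in any pointed model category and eliminates the hypotheses the paper has to track; the paper's connectivity-based route gives nothing extra here. You also make explicit the final Puppe-exactness step that the paper leaves implicit.
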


\begin{proof}
  As in the proof of Lemma \ref{cat W'}, $p$ admits a right homotopy inverse, and so does $\Sigma^2 p\colon\Sigma^2(\widetilde{T}_{8n+2}) \to \Sigma^2 Q_{4n+2,2}$. By the Blakers-Massey theorem, the $(8n+5)$-skeleton of the fiber of $\Sigma^2 Q_{4n+2,2} \to \Sigma^2 W'$ is $\Sigma^2(\widetilde{T}_{8n+2})$. Then by the existence of a right homotopy inverse, $\Sigma^2 Q_{4n+2,2} \to \Sigma^2 W'$ is null-homotopic. Since $\mathrm{dim}\,Q_{4n+2,2} = 4n+2$ and $W'$ is $4n$-connected, $Q_{4n+2,2} \to W'$ is trivial by the Freudenthal suspension theorem. Thus the statement is proved.
\end{proof}

Furthermore, the pushout construction of a spherical fibration in the proof of Proposition \ref{upper bound odd} can be generalized; there is a map $\kappa\colon S^{8n+2}\times S^{4n} \to \widetilde{T}_{8n+2}$ with $\kappa\vert_{*\times S^{4n}}$ being the inclusion of the bottom cell, such that there is a homotopy commutative diagram
\[
\xymatrix{
S^{4n} \ar[d]&& S^{8n+2}\times S^{4n} \ar[ll]_{p_2} \ar[rr]^{\kappa}\ar[d]^{p_1}&& \widetilde{T}_{8n+2}\ar[d]^{p}\\
\ast \ar[d]&& S^{8n+2} \ar[ll]\ar[rr]\ar[d]&& Q_{4n+2,2}\ar[d]\\
S^{4n+1} \ar@{=}[d]&& S^{8n+2}\ltimes S^{4n+1} \ar[ll]_{p_2} \ar[rr]^{\tilde{\kappa}}\ar[d]&& W'\ar[d]^{\delta}\\
S^{4n+1} && \Sigma(S^{8n+2}\times S^{4n}) \ar[ll]_{\Sigma p_2} \ar[rr]^{\Sigma\kappa}&& \Sigma(\widetilde{T}_{8n+2})
}
\]
where all columns are homotopy cofiber sequences and $p_1\colon\widetilde{T}\to T$ is the map between the homotopy pushout of the top two rows. By taking the homotopy pushouts of the rows, we get a cofiber sequence $\widetilde{T} \xrightarrow{p} T \to W(0) \xrightarrow{\delta} \Sigma\widetilde{T}$. Then by the argument as in the proof of Proposition \ref{phi}, $W(0)$ has the homotopy type of the cofiber of the composite
\[
S^{12n+3}=S^{8n+2} \star S^{4n} \xrightarrow{s} S^{8n+2} \ltimes S^{4n+1} \xrightarrow{\tilde{\kappa}} W',
\]
where $s$ is the map as in \eqref{s}. Note that $\kappa\vert_{S^{8n+2}\times *}$ corresponds to the attaching map $\psi$ of the $(8n+3)$-cell of $\widetilde{T}$.

\begin{lemma}
  \label{phi quotient}
  The composite
  \[
  S^{8n+2} \star S^{4n} \xrightarrow{s} S^{8n+2} \ltimes S^{4n+1} \xrightarrow{\tilde{\kappa}} W' \xrightarrow{q} W'/W'_{4n+1} = S^{8n+2}\cup_2 e^{8n+3}
  \]
  is trivial.
\end{lemma}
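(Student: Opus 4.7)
My plan is to reduce the triviality of the composite to a divisibility statement in $\pi_{12n+3}(S^{8n+2})$, and then to establish divisibility using a Hopf-construction analysis parallel to that of Lemma \ref{Theta 3}.

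First, under the splitting $S^{8n+2}\ltimes S^{4n+1}\simeq S^{4n+1}\vee S^{12n+3}$, the map $s$ is the inclusion of the top wedge summand. Adapting Corollary \ref{cell decomposition} to the present setup (where the fiber dimension is $4n+1$, so the roles of ``$m$'' and ``$2n+1$'' in the general theory are played by $4n+1$ and $4n+1$), the composite $\tilde\kappa\circ s$ coincides with the attaching map $\phi(0)\colon S^{12n+3}\to W(0)_{8n+2}$ of the top $(12n+4)$-cell of $W(0)$, post-composed with the inclusion $W(0)_{8n+2}\hookrightarrow W'$. Since $W'_{4n+1}=W(0)_{4n+1}=S^{4n+1}$ is the bottom cell of $W(0)_{8n+2}\simeq S^{4n+1}\cup_{-[1,1]}e^{8n+2}$, the composite $q\circ\tilde\kappa\circ s$ factors as $i\circ\bar\phi(0)$, where $\bar\phi(0)=\rho\circ\phi(0)\in\pi_{12n+3}(S^{8n+2})$, $\rho\colon W(0)_{8n+2}\to S^{8n+2}$ is the pinch onto the top cell, and $i\colon S^{8n+2}\hookrightarrow S^{8n+2}\cup_2 e^{8n+3}$ is the bottom-cell inclusion.

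Second, from the cofiber sequence $S^{8n+2}\xrightarrow{2}S^{8n+2}\xrightarrow{i}S^{8n+2}\cup_2 e^{8n+3}$, we have $i_\ast\bar\phi(0)=0$ if and only if $\bar\phi(0)\in 2\cdot\pi_{12n+3}(S^{8n+2})$. Therefore it suffices to show that $\bar\phi(0)$ is divisible by $2$. To prove this divisibility, I will establish a Hopf-construction description of $\Sigma\bar\phi(0)$ paralleling Lemma \ref{Theta 3} (which gave $\Sigma(\rho\circ\lambda(0))=\pm\Sigma^{4n+2}\chi(T)=\pm 2$). Using the definition of $\phi(0)$ as the top-cell component of $\bar\mu(0)\colon\Sigma^{4n+1}W(0)_{8n+2}\to W(0)_{8n+2}$ from Proposition \ref{phi}, and expressing $\bar\mu(0)$ in terms of the fiberwise product structure of the unit tangent bundle, the element $\Sigma\bar\phi(0)$ should appear as the pinch onto $S^{8n+3}$ of a Hopf construction involving the structure map $\mu\colon S^{4n+1}\times S^{4n+1}\to S^{4n+1}$ defining $T\to S^{4n+2}$. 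Since $\chi(T)=2$, a factor of $2$ enters this construction (analogous to how $\chi(X(\alpha))$ enters Lemma \ref{Theta 3}), yielding $\Sigma\bar\phi(0)\in 2\cdot\pi_{12n+4}(S^{8n+3})$. The Freudenthal suspension theorem, applied in the stable range $12n+3\le 2(8n+2)-2$, then desuspends the factor of $2$ to give $\bar\phi(0)\in 2\cdot\pi_{12n+3}(S^{8n+2})$, proving the lemma.

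The main obstacle I foresee is the last step: extracting a clean Hopf-construction identity for $\Sigma\bar\phi(0)$ and tracking precisely how $\chi(T)=2$ propagates into the top-cell component of $\bar\mu(0)$. The proof of Lemma \ref{Theta 3} for $\lambda$ provides the template, but the top-cell attaching $\phi$ sits one layer deeper in the construction, involving the map $s$ of \eqref{s} and the decomposition $\Sigma^{4n+1}W(0)_{8n+2}\simeq S^{8n+2}\vee S^{12n+3}$, so a parallel but more delicate diagrammatic analysis will be required.
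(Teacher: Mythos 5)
Your reduction step is essentially sound: $\tilde\kappa\circ s$ does factor through the $(8n+2)$-skeleton $W'_{8n+2}=W(0)_{8n+2}$ (this can be read off from Proposition \ref{phi} once the Section \ref{tangent} construction of $W(0)$ is matched with that of Section \ref{complex W}), and the exact sequence coming from the cofibration $S^{8n+2}\xrightarrow{2}S^{8n+2}\to S^{8n+2}\cup_2 e^{8n+3}$ does give the equivalence $i_*\bar\phi(0)=0\iff\bar\phi(0)\in 2\cdot\pi_{12n+3}(S^{8n+2})$. The Freudenthal desuspension step is also fine in this range.

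The gap is at the crux. You need $\bar\phi(0)$ divisible by $2$, and you propose to get it by producing a Hopf-construction expression for $\Sigma\bar\phi(0)$ into which $\chi(T)=2$ enters as a factor, ``analogous to Lemma \ref{Theta 3}.'' No such formula is established, and the analogy does not carry over: Theorem \ref{Sigma chi} and Lemma \ref{Theta 3} describe $\rho\circ\lambda(\alpha)$ only, and the arguments there (pinching onto the \emph{middle} cell after restricting to the skeleton $W(\alpha)_{m+2n+2}$, degree counts below $4n+3$) do not apply to the top-cell attaching map $\phi(\alpha)$, which lives one cell higher and whose description after composing with $\rho$ involves the interaction of \emph{all} the lower attaching data, not merely $\chi$. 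The paper's own proof of this lemma is entirely different and never touches Hopf constructions: it identifies $q\circ\tilde\kappa\circ s$, via a three-column diagram of cofibrations, with the attaching map of the top cell of $\Sigma\widetilde T/\Sigma Q_{4n+2,3}$, and then quotes James's stable splitting $\widetilde T\simeq_S Q_{4n+2,3}\vee(S^{8n+1}\cup_2 e^{8n+2})\vee S^{8n+3}\vee S^{12n+3}$, under which the top cell splits off so that the attaching map is trivially zero. That stable splitting is a deep structural theorem about Stiefel manifolds and encodes far more than $\chi(T)=2$; there is no route from ``the Euler class is even'' to ``the top cell of $V_3(\R^{4n+3})$ splits off stably,'' and for a general homotopy fibration $S^{4n+1}\to X\to S^{4n+2}$ with $\chi(X)=2$ there is no reason for such a splitting. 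So the step you flag as needing ``a parallel but more delicate analysis'' is in fact the entire content of the lemma, and closing it requires a genuinely different input (James's splitting or an equivalent), not just a more careful diagram chase parallel to the $\lambda$ case.
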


\begin{proof}
  Consider the homotopy commutative diagram
  \[
  \xymatrix{
  Q_{4n+2,3} \ar[r] \ar[d]& \widetilde{T}_{8n+2} \ar[r]^-{q} \ar[d]^{p}& S^{8n+1}\cup_2 e^{8n+2} \ar[d]\\
  Q_{4n+2,2} \ar@{=}[r] \ar[d]& Q_{4n+2,2} \ar[r] \ar[d]& \ast\ar[d]\\
  S^{4n+1} \ar[r] \ar[d]& W' \ar[r]^-{q} \ar[d]^{\delta}& S^{8n+2}\cup_2 e^{8n+3}\ar@{=}[d]\\
  \Sigma Q_{4n+2,3} \ar[r]& \Sigma(\widetilde{T}_{8n+2}) \ar[r]^-{q}& S^{8n+2}\cup_2 e^{8n+3}
  }
  \]
  where all columns and rows are homotopy cofiber sequences. Then the composite in the statement corresponds to the attaching map of the $(12n+4)$-cell of $\Sigma\widetilde{T}/\Sigma Q_{4n+2,3}$. On the other hand, by \cite{J76}, there is a stable homotopy eqivalence
  \begin{equation}
    \label{V3 stable splitting}
    \widetilde{T} \simeq_{S} Q_{4n+2,3} \vee (S^{8n+1} \cup_2 e^{8n+2}) \vee S^{8n+3} \vee S^{12n+3}.
  \end{equation}
  \noindent
  Then by the Freudenthal suspension theorem, we get
  \[
  \Sigma\widetilde{T}/\Sigma Q_{4n+2,3} =  (S^{8n+2} \cup_2 e^{8n+3}) \vee S^{8n+4} \vee S^{12n+4}.
  \]
  Thus the attaching map of the $(12n+4)$-cell of $\Sigma\widetilde{T}/\Sigma Q_{4n+2,3}$ is trivial, completing the proof.
\end{proof}

We are now ready to prove Proposition \ref{main4}.

\begin{proof}
  [Proof of Proposition \ref{main4}]
  Recall that there is a homotopy commutative diagram
  \[
  \xymatrix{
  S^{12n+3} \ar[rr]^{s}\ar@{=}[d]&& S^{8n+2}\ltimes S^{4n+1} \ar[rr]^{\tilde{\kappa}}\ar[d]&& W'\ar[d]^{\delta}\\
  S^{12n+3} \ar[rr]^{\iota}&& \Sigma(S^{8n+2}\times S^{4n}) \ar[rr]^{\Sigma\kappa}&& \Sigma(\widetilde{T}_{8n+2}).
  }
  \]
  By \cite[Theorem 3.16]{BS}, we have
  \[
  \overline{H}_{\nabla}(\Sigma\kappa\circ\iota) = \overline{H}_{\nabla}(\delta\circ\tilde{\kappa}\circ s) = \delta^{\wedge2}\circ \overline{H}_{\nabla}(\tilde{\kappa}\circ s)+ \overline{H}_{\nabla}(\delta)\circ \Sigma(\tilde{\kappa}\circ s).
  \]
  We aim to prove $\overline{H}_{\nabla}(\tilde{\kappa}\circ s) = 0$. Then since $\cat(W')=1$ by Lemma \ref{cat W'}, it follows that $\wcat(W(0)) = 1$ by Theorem \ref{H and wcat} as stated.

  First, we prove that $\overline{H}_{\nabla}(\delta)\circ \Sigma(\tilde{\kappa}\circ s) = 0$. Since $(\Sigma(\widetilde{T}_{8n+2}))^{\wedge2}$ is $(8n+1)$-connected, $\overline{H}_{\nabla}(\delta)$ has an extension
  \[
  \tilde{\delta}\colon \Sigma W'/S^{4n+2} = S^{8n+3}\cup_2 e^{8n+4} \to (\Sigma(\widetilde{T}_{8n+2}))^{\wedge2}.
  \]
  On the other hand, the composite
  \[
  S^{12n+4} \xrightarrow{\Sigma s} \Sigma(S^{8n+2} \ltimes S^{4n+1}) \xrightarrow{\Sigma\tilde{\kappa}} \Sigma W' \xrightarrow{q} S^{8n+3}\cup_2 e^{8n+4}
  \]
  is trivial by Lemma \ref{phi quotient}. Then we get
  \[
  \overline{H}_{\nabla}(\delta)\circ \Sigma(\tilde{\kappa}\circ s) = \tilde{\delta}\circ q\circ \Sigma(\tilde{\kappa}\circ s) = 0.
  \]

   We now have $\overline{H}_{\nabla}(\Sigma\kappa\circ\iota) = \delta^{\wedge2}\circ \overline{H}_{\nabla}(\tilde{\kappa}\circ s)$. Since $\kappa\vert_{S^{8n+2}\vee S^{4n}} = \psi + i$, where $i$ is the bottom cell inclusion, we get
   \[
   \overline{H}_{\nabla}(\Sigma\kappa\circ\iota) = \overline{H}_{\nabla}(\mathcal{H}(\kappa)) = \pm(\Sigma \psi \wedge \Sigma i) = \pm(1_{\Sigma(\widetilde{T}_{8n+2})}\wedge \Sigma i)\circ(\Sigma\psi\wedge 1_{S^{4n+1}}).
   \]
   On the other hand, it follows that $\Sigma\psi\wedge 1_{S^{4n+1}} = \Sigma^{4n+2}\psi$ is trivial from the stable splitting \eqref{V3 stable splitting} and the Freudenthal suspension theorem. Then we get $\overline{H}_{\nabla}(\Sigma\kappa\circ\iota) = 0$. Thus by Lemma \ref{delta retraction}, we obtain $\overline{H}_{\nabla}(\tilde{\kappa}\circ s) = 0$, completing the proof.
\end{proof}

%%%%% Section 8 %%%%%

\section{Open problems}\label{Problems}

We conclude this paper with a list of questions regarding the parametrized topological complexity of spherical fibrations over spheres. We hope that these problems precipitate further research.

First, we consider a homotopy fibration $S^{2n+1}\to X\to S^{m+1}$ with $1\le n$ and $2n+1\le m\le4n-1$. Note that, except for the cases in Section \ref{tangent}, we have been considered only $\wcat(W(0)_{m+2n+2})$ as a lower bound.

\begin{problem}
  Does there exist a homotopy fibration $S^{2n+1}\to X\to S^{m+1}$ such that $\wcat(W(0)_{m+2n+2}) = 1$ but $\wcat(W(0)) = 2$?
\end{problem}

In Section \ref{tangent}, we have considered the unit tangent bundle $S^{4n+1}\to T\to S^{4n+2}$ and proved $\wcat(W(0)) = 1$ as an example of the limit of the lower bound by the weak category.

\begin{problem}
  For $n\ge 2$, is $\TC[T\to S^{4n+2}] = 1$ or $2$?
\end{problem}

Next, we consider a homotopy fibration $S^{2n}\to X\to S^{m+1}$ with $2n\le m\le4n-3$. Then by Lemma \ref{W}, the space $W$ in Corollary \ref{lower bound wcat} can be described as a $4$-cell complex $S^{2n}\cup e^{4n}\cup e^{m+2n+1}\cup e^{m+4n+1}$. However, the computation of $\wcat(W)$ is considerably more difficult. We have $\wcat(W_{4n}) = 2$ since $-[1_{S^{2n}}, 1_{S^{2n}}]$ is not a suspension. Then we get $\wcat(W_{m+2n+1}) = 2$ by degree reason.

\begin{problem}
  Does there exist a homotopy fibration $S^{2n} \to X\to S^{m+1}$ such that $\wcat(W)=3$?
\end{problem}

We also need to improve an upper bound; the upper bound in Proposition \ref{upper bound FW} can only be applied to the unit sphere bundles, and the computation is not easy for $m>2n$ cases.

\begin{problem}
  Generalize the upper bound in Proposition \ref{upper bound FW} to homotopy fibrations $S^{2n}\to X\to S^{m+1}$.
\end{problem}

Finally, we consider a homotopy fibration $S^{n}\to X\to S^{m+1}$ with $2n-3<m$. Then the argument about the crude Hopf invariants in Section \ref{Crude} cannot be applied.

\begin{problem}
  Compute $\wcat(W)$ and $\TC[X\to S^{m+1}]$ for homotopy fibrations $S^n \to X\to S^{m+1}$ with $2n-3<m$.
\end{problem}


\begin{thebibliography}{99}
  \bibitem{Ad} J. F. Adams, On the groups $J(X)$. II, Topology \textbf{3} (1965), 137–171.

  \bibitem{Ar} M. Arkowitz, Introduction to homotopy theory, Universitext (Springer, New York, 2011), xiv+344 pp.

  \bibitem{B} H. J. Baues, Commutator calculus and groups of homotopy classes, London Math. Soc. Lecture Note Ser. 50 (Cambridge University Press, Cambridge-New York, 1981), ii+160 pp.

  \bibitem{BH} I. Berstein and P. J. Hilton, Category and generalized Hopf invariants, Illinois J. Math. \textbf{4} (1960), 437–451.

  \bibitem{BS}  J. M. Boardman and B. Steer, On Hopf invariants, Comment. Math. Helv. \textbf{42} (1967), 180–221.

  \bibitem{CFW}  D. C. Cohen, M. Farber and S. Weinberger, Topology of parametrized motion planning algorithms, SIAM J. Appl. Algebra Geom. \textbf{5} (2021), no. 2, 229–249.

  \bibitem{C} M. C. Crabb, Fibrewise topological complexity of sphere and projective bundles, arXiv:2305.12836v1.

  \bibitem{D} N. Daundkar, Equivariant parametrized topological complexity,  arXiv:2405.20809v1.

  \bibitem{F} M. Farber, Topological complexity of motion planning, Discrete Comput. Geom. \textbf{29} (2003), no.2, 211–221.

  \bibitem{FO} M. Farber and J. Oprea, Sequential parametrized topological complexity and related invariants, arXiv: 2209.01990v2.

  \bibitem{FP22} M. Farber and A. K. Paul, Sequential parametrized motion planning and its complexity, Topology Appl. \textbf{321} (2022), Paper No. 108256, 23 pp.

  \bibitem{FP23} M. Farber and A. K. Paul, Sequential parametrized topological complexity of sphere bundles, arXiv: 2308.10595v1.

  \bibitem{FW0}M. Farber and S. Weinberger, Parametrized motion planning and topological complexity, Algorithmic Foundations of Robotics XV, Springer Proc. Adv. Robot. \textbf{25} (Springer, Cham, 2023), 1–17.

  \bibitem{FW1} M. Farber and S. Weinberger, Parametrized topological complexity of sphere bundles, Topol. Methods Nonlinear Anal. \textbf{61} (2023), no.1, 161–177.

  \bibitem{G} J. M. Garc\'{i}a-Calcines, A note on covers defining relative and sectional categories, Topology Appl. \textbf{265} (2019), 106810, 14 pp.

  \bibitem{GV} J. M. Garc\'{i}a-Calcines and L. Vandembroucq, Weak sectional category, J. Lond. Math. Soc. (2) \textbf{82} (2010), no. 3, 621–642.

  \bibitem{Gr} M. Grant, Parametrised topological complexity of group epimorphisms, Topol. Methods Nonlinear Anal. \textbf{60} (2022), no.1, 287–303.

  \bibitem{Hi} P. J. Hilton, A note on the $P$-homomorphism in homotopy groups of spheres, Proc. Cambridge Philos. Soc. \textbf{51} (1955), 230–233.

  \bibitem{HW} P. J. Hilton, J. H. C. Whitehead, Note on the Whitehead product, Ann. of Math. (2) \textbf{58} (1953), 429–442.

  \bibitem{J60} I. M. James, On $H$-spaces and their homotopy groups, Quart. J. Math. Oxford Ser. (2) \textbf{11} (1960), 161–179.

  \bibitem{J76} I. M. James, The topology of Stiefel manifolds, London Math. Soc. Lecture Note Ser., No. 24, (Cambridge University Press, Cambridge-New York-Melbourne, 1976), viii+168 pp.

  \bibitem{JW} I. M. James and J. H. C. Whitehead, The homotopy theory of sphere bundles over spheres. I, Proc. London Math. Soc. (3) \textbf{4} (1954), 196–218.

  \bibitem{KrM} L. Kristensen and I. Madsen, Note on Whitehead products in spheres, Math. Scand. \textbf{21} (1967), 301–314.

  \bibitem{M} M. Mahowald, The metastable homotopy of $S^n$, Mem. Amer. Math. Soc., No. 72, (American Mathematical Society, Providence, RI, 1967), 81 pp.

  \bibitem{R} Y. B. Rudyak, On higher analogs of topological complexity, Topology Appl. \textbf{157} (2010), no.5, 916–920.

  \bibitem{Sa} H. Samelson, Groups and spaces of loops, Comment. Math. Helv. \textbf{28} (1954), 278–287.

  \bibitem{Sv} A. S. \v{S}varc, The genus of a fibered space, Trudy Moskov. Mat. Ob\v{s}\v{c}. \textbf{10} (1961), 217-272.

  \bibitem{Ta57} I. Tamura, On Pontrjagin classes and homotopy types of manifolds, J. Math. Soc. Japan \textbf{9} (1957), 250–262.

  \bibitem{Ta58} I. Tamura, Homeomorphy classification of total spaces of sphere bundles over spheres, J. Math. Soc. Japan \textbf{10} (1958), 29–43.

  \bibitem{Th} S. Thomeier, Einige Ergebnisse \"{u}ber Homotopiegruppen von Sph\"{a}ren, Math. Ann. \textbf{164} (1966), 225–250.

  \bibitem{To} H. Toda, Composition methods in homotopy groups of spheres, Ann. of Math. Stud. no. 49 (Princeton University Press, Princeton, NJ, 1962), v+193 pp.

  \bibitem{Z} A. Zabrodsky, Hopf spaces, North-Holland Math. Stud., Vol. 22, Notas Mat., No. 59 (North-Holland Publishing Co., Amsterdam-New York-Oxford, 1976), x+223 pp.

\end{thebibliography}
\end{document}